\documentclass[11pt]{amsart}
\usepackage{amsmath,amssymb,xypic,array}
\usepackage[T1]{fontenc}
\usepackage{amsfonts}
\usepackage{amsmath}
\usepackage{amssymb}
\usepackage{amsthm}
\usepackage[cp850]{inputenc}
\usepackage{hyperref}
\usepackage{graphicx}

\usepackage{tikz}
\usepackage{longtable}
\usepackage{geometry}
\usepackage{textcomp}
\usetikzlibrary{trees}

\setcounter{MaxMatrixCols}{30}
\providecommand{\U}[1]{\protect\rule{.1in}{.1in}}
\providecommand{\U}[1]{\protect\rule{.1in}{.1in}}
\providecommand{\U}[1]{\protect\rule{.1in}{.1in}}
\providecommand{\U}[1]{\protect\rule{.1in}{.1in}}
\providecommand{\U}[1]{\protect\rule{.1in}{.1in}}
\input{xy}
\xyoption{all}
\setlength{\textheight}{210mm} 
\setlength{\topmargin}{0.46cm}
\setlength{\textwidth}{152mm} 
\setlength{\evensidemargin}{0.60cm}
\setlength{\oddsidemargin}{0.60cm}


\theoremstyle{theorem}

\newtheorem{Theorem}{Theorem}[section]
\newtheorem*{theoremn}{Theorem}

\newtheorem{Lemma}[Theorem]{Lemma}
\newtheorem{Proposition}[Theorem]{Proposition}

\theoremstyle{definition}

\newtheorem*{scr}{Script}
\newtheorem{Definition}[Theorem]{Definition}
\newtheorem{Remark}[Theorem]{Remark}

\newtheorem{Example}[Theorem]{Example}
\newtheorem{Construction}[Theorem]{Construction}
\newtheorem{Convention}[Theorem]{Convention}

\numberwithin{equation}{section}

\newcommand{\arXiv}[1]{\href{http://arxiv.org/abs/#1}{arXiv:#1}}
\def\bibaut#1{{\sc #1}}

\DeclareMathOperator{\rank}{rank}

\DeclareMathOperator{\Hilb}{Hilb}

\DeclareMathOperator{\Proj}{Proj}

\DeclareMathOperator{\Sing}{Sing}

\DeclareMathOperator{\Sec}{Sec}
\DeclareMathOperator{\codim}{codim}

\newcommand{\QED}{\ifhmode\unskip\nobreak\fi\quad {\rm Q.E.D.}} 

\newcommand\bin[2]{{#1\choose #2}}

\newcommand\iso{\cong}

\newcommand{\f}{\varphi}

\newcommand{\G}{\mathbb{G}}

\DeclareMathOperator{\Grass}{Grass}

\renewcommand{\P}{\mathbb{P}}

\newcommand{\rat}{\dasharrow}
\renewcommand{\sec}{\mathbb{S}ec}

\begin{document}
\title{Generalized varieties of sums of powers}

\author[Alex Massarenti]{Alex Massarenti}
\address{\sc Alex Massarenti\\
IMPA\\
Estrada Dona Castorina 110\\
22460-320 Rio de Janeiro\\ Brasil}
\email{massaren@impa.br}

\date{\today}
\subjclass[2010]{Primary 14M22; Secondary 14D23, 14D06, 14E08, 14M20, 14N05,14N25}
\keywords{Rational varieties; Rationally connected varieties; Varieties of sums of powers}

\maketitle

\begin{abstract}
Let $X\subset\mathbb{P}^{N}$ be an irreducible, non-degenerate variety. The generalized variety of sums of powers $VSP_H^X(h)$ of $X$ is the closure in the Hilbert scheme $\Hilb_{h}(X)$ of the locus parametrizing collections of points $\{x_{1},...,x_{h}\}$ such that the $(h-1)$-plane $\left\langle x_{1},...,x_{h}\right\rangle$ passes trough a fixed general point $p\in\mathbb{P}^{N}$. When $X = V_{d}^{n}$ is a Veronese variety we recover the classical variety of sums of powers $VSP(F,h)$ parametrizing additive decompositions of a homogeneous polynomial as powers of linear forms. In this paper we study the birational behavior of $VSP_H^X(h)$. In particular we will show how some birational properties, such as rationality, unirationality and rational connectedness, of $VSP_H^X(h)$ are inherited from the birational geometry of variety $X$ itself.
\end{abstract}

\tableofcontents

\section*{Introduction}
Let $X\subset\mathbb{P}^{N}$ be an irreducible, non-degenerate variety. The Zariski closure of the union of the linear spaces spanned by collections of $h$ points on $X$ is the $h$-secant variety $\sec_{h}(X)$ of $X$. Secant varieties are central objects in both classical algebraic geometry \cite{CC}, \cite{Za}, and applied mathematics \cite{LM}, \cite{LO}, \cite{MR}.\\
The abstract $h$-secant variety $\Sec_{h}(X)$ is the Zariski closure of the set of couples $(p,H)$ where $p\in\mathbb{P}^{N}$ and $H$ is an $(h-1)$-plane $h$-secant to $X$. In this paper we study the general fiber of the natural map $\pi_{h}:\Sec_{h}(X)\rightarrow\mathbb{P}^{N}$. If $p\in\mathbb{P}^{N}$ is a general point such fiber parametrizes $(h-1)$-planes $h$-secant to $X$ passing through $p$.\\
Let $V_{d}^{n}$ be the Veronese variety obtained as the image of the embedding $\nu_{d}^{n}:\mathbb{P}^{n}\rightarrow\mathbb{P}^{N}$ induced by $\mathcal{O}_{\mathbb{P}^{n}}(d)$. When $X = V_{d}^{n}$ the general fiber of $\pi_{h}:\Sec_{h}(V_{d}^{n})\rightarrow\mathbb{P}^{N}$ parametrizes decompositions of a general homogeneous polynomial $F\in k[x_{0},...,x_{n}]$ as sums of powers of linear forms. In this case the fiber $\pi_{h}^{-1}(F)$ is called the \textit{variety of sums of powers} of $F$ and denoted by $VSP(F,h)$. The varieties $VSP(F,h)$ have been widely studied from both the biregular \cite{IR}, \cite{Mu1}, \cite{Mu2}, \cite{RS} and the birational viewpoint \cite{MMe}. The interest in these varieties increased after \textit{S. Mukai} described the Fano $3$-fold $V_{22}$ and a polarized $K3$ surface of genus $20$ as the varieties of sums of powers of general polynomials $F\in k[x_0,x_1,x_2]_{4}$ and $F\in k[x_0,x_1,x_2]_{6}$ respectively \cite{Mu1}, \cite{Mu2}. Then many authors generalized Mukai's techniques to other polynomials \cite{IR}, \cite{DK}, \cite{RS}, \cite{TZ}. See \cite{Do} for a survey. Recently the variety of sums of powers of a polynomial $F\in k[x_0,...,x_5]_{4}$ has been used to construct particular divisors in the moduli space of cubic $4$-folds \cite{RV}.\\
In this paper, in analogy with the classical case, we denote by $VSP_{G}^{X}(h)$ the general fiber of the map $\pi_{h}:\Sec_{h}(X)\rightarrow\mathbb{P}^{N}$ and we call the varieties $VSP_{G}^{X}(h)$ \textit{generalized varieties of sums of powers}, see Definition \ref{vspg}. The letter $G$ reminds us that we are looking at $VSP_{G}^{X}(h)$ in the Grassmannian $\G(h-1,N)$. In Definition \ref{vsph} we introduce the varieties $VSP_H^X(h)$ as a subvariety of the Hilbert scheme of points $\Hilb_{h}(X)$. However, under a suitable numerical hypothesis $VSP_{G}^{X}(h)$ and $VSP_H^X(h)$ turn out to be birational, see Remark \ref{comp}.\\
Our aim is to investigate the birational behavior of $VSP_H^X(h)$. More precisely we will show how some birational properties of $VSP_H^X(h)$ are inherited from the birational geometry of $X$ itself.\\
In Section \ref{smd} we consider the case when $X$ is a variety of minimal degree. That is an irreducible, non-degenerate variety $X\subset\mathbb{P}^{N}$ such that $\deg(X) = \codim(X)+1$. In this context our main result is Theorem \ref{mindeg}.
\begin{theoremn}
Let $X\subset\mathbb{P}^{N}$ be a variety of minimal degree $\deg(X)=d$. Then $VSP_{H}^{X}(h)$ is irreducible for any $h\geq d$. Furthermore $VSP_{H}^{X}(h)$ is rational if $h = d$, and unirational for any $h\geq d$.
\end{theoremn}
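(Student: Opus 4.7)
The plan is to exploit two classical facts about a variety of minimal degree $X\subset\P^N$ with $\dim X=n$ and $\deg X=d=N-n+1$: (a) $X$ is rational; and (b) for a general linear subspace $\Lambda\subset\P^N$ of dimension $h-1\geq d-1$, the intersection $X\cap\Lambda$ has dimension $h-d$, degree $d$ and codimension $d-1$ in $\Lambda$, so it is itself a variety of minimal degree in $\Lambda$, in particular irreducible, non-degenerate and rational.

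Let $\G_p\cong\G(h-2,N-1)$ parametrize the $(h-1)$-planes through $p$, and consider the universal incidence
$$\mathcal{U}=\{(\Lambda,x)\in\G_p\times X : x\in\Lambda\}.$$
The projection $\mathcal{U}\to X$ has generic fiber $\G(h-3,N-2)$ over $x\neq p$, so $\mathcal{U}$ is a Grassmannian bundle over the rational base $X$. By generic triviality of the associated vector bundle, $\mathcal{U}$ is birational to $X\times\G(h-3,N-2)$, and therefore rational.

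When $h=d$, the generic fiber of $\mathcal{U}\to\G_p$ is a reduced zero-dimensional scheme of length $d$, in linearly general position in $\Lambda$. The map $\Lambda\mapsto X\cap\Lambda$ yields a birational equivalence $\G_p\dashrightarrow VSP_H^X(d)$ with inverse $Z\mapsto\langle Z\rangle$, so $VSP_H^X(d)$ is birational to $\G(d-2,N-1)$, hence irreducible and rational. When $h>d$, form the $h$-fold fiber product $\mathcal{U}^{(h)}=\mathcal{U}\times_{\G_p}\cdots\times_{\G_p}\mathcal{U}$; the natural map
$$\mathcal{U}^{(h)}\dashrightarrow VSP_H^X(h),\qquad (\Lambda;x_1,\ldots,x_h)\mapsto\{x_1,\ldots,x_h\},$$
is dominant and generically finite (the span $\langle x_1,\ldots,x_h\rangle$ recovers $\Lambda$, and a direct dimension count gives $\dim\mathcal{U}^{(h)}=\dim VSP_H^X(h)=n(h-1)+(h-d)$). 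It thus suffices to show $\mathcal{U}^{(h)}$ is unirational. The iterated forgetful maps $\mathcal{U}^{(k)}\to\mathcal{U}^{(k-1)}$ present $\mathcal{U}^{(h)}$ as a tower of fibrations with rational fiber $X\cap\Lambda$ (of minimal degree) over the rational base $\mathcal{U}$; by induction, using rational multi-sections inherited from the rationality of $\mathcal{U}\to\G_p$, one obtains unirationality of $\mathcal{U}^{(h)}$, and hence of $VSP_H^X(h)$. Irreducibility in both cases follows since $\mathcal{U}^{(h)}$ is an iterated fibration of irreducible varieties over an irreducible base.

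The principal obstacle is the passage from fiberwise rationality to global unirationality in the case $h>d$: although every fiber of $\mathcal{U}^{(h)}\to\G_p$ is a product of rational varieties of minimal degree, these sections vary in moduli, so a uniform parametrization must be assembled. The key input is the rationality of $\mathcal{U}$, which furnishes rational multi-sections compatible with the iterated fiber-product structure; alternatively, one may use the explicit presentation of varieties of minimal degree as rational normal scrolls (or the Veronese surface $V_2^2$) to construct an explicit unirational parametrization, case by case.
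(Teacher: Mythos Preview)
Your treatment of the case $h=d$ matches the paper's exactly: both identify $VSP_H^X(d)$ birationally with the Grassmannian of $(d-1)$-planes through $p$ via $\Lambda\mapsto\Lambda\cap X$. Your irreducibility argument for $h>d$ via the iterated fiber product $\mathcal{U}^{(h)}$ is also sound, since this is an irreducible tower dominating $VSP_H^X(h)$.

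For unirationality when $h>d$, however, your route diverges from the paper's and carries precisely the gap you yourself name. The assertion that $\mathcal{U}^{(h)}$ is unirational because each step $\mathcal{U}^{(k)}\to\mathcal{U}^{(k-1)}$ has rational fiber over a rational base is not a proof: fiberwise rationality does not imply unirationality of the total space, and appealing to ``rational multi-sections'' or to a case-by-case analysis of scrolls is a sketch of a strategy rather than an argument. In fact, projecting $\mathcal{U}^{(k)}\to X^k$ one sees that $\mathcal{U}^{(k)}$ is rational for every $k\le h-1$ (the generic fiber is the Grassmannian of $(h-1)$-planes through $p,x_1,\ldots,x_k$, and $\mathcal{U}^{(h-1)}\to X^{h-1}$ is even birational); but the final step $\mathcal{U}^{(h)}\to\mathcal{U}^{(h-1)}$, whose fiber is the positive-dimensional section $X\cap\langle p,x_1,\ldots,x_{h-1}\rangle$, is exactly the one your argument does not settle.

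The paper sidesteps this by a different construction that reduces directly to the case $h=d$. Rather than choosing $h$ points on a positive-dimensional linear section of $X$, one picks $h-d$ points $x_1,\ldots,x_{h-d}\in X$ together with scalars $\lambda_i$, sets $q=p-\sum_i\lambda_ix_i$, and applies the birational map of the case $h=d$ at the point $q$: a general $(d-1)$-plane $\Lambda$ through $q$ meets $X$ in exactly $d$ points $\hat{x}_1,\ldots,\hat{x}_d$, and then $\{x_1,\ldots,x_{h-d},\hat{x}_1,\ldots,\hat{x}_d\}$ is a decomposition of $p$. The parameter space
\[
Y=\Bigl\{\bigl((x_1,\lambda_1),\ldots,(x_{h-d},\lambda_{h-d}),\Lambda\bigr)\ :\ p-\textstyle\sum_i\lambda_ix_i\in\Lambda\Bigr\}\subseteq(X\times\P^1)^{h-d}\times\G(d-1,N)
\]
projects onto the rational variety $(X\times\P^1)^{h-d}$ with Grassmannian fibers, so $Y$ is rational outright; the induced map $Y\dashrightarrow VSP_H^X(h)$ is generically finite of degree $\binom{h}{h-d}$ and, by a dimension count, dominant. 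Irreducibility and unirationality follow immediately, with no family-of-rational-varieties obstacle to negotiate.
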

In Section \ref{ldrc} we consider the case when the rational map
$$
\begin{array}{cccc}
\chi: & VSP_{H}^{X}(h) & \dasharrow & \G(h-2,N-1)\\
 & \{x_1,...,x_h\} & \longmapsto & \left\langle x_1,...,x_h\right\rangle
\end{array}
$$
is dominant. Here the Grassmannian $\G(h-2,N-1)$ parametrizes $(h-1)$-planes passing through a general point $p\in\mathbb{P}^{N}$. Then, by studying the general fiber of $\chi$, we get Theorem \ref{rcdeg}.
\begin{theoremn}
Let $X\subset\mathbb{P}^{N}$ be an irreducible variety. Assume $h > N-\dim(X)+1$ and that the general $(h-1)$-dimensional linear section of $X$ is rationally connected. Then the irreducible components of $VSP_{H}^{X}(h)$ are rationally connected.
\end{theoremn}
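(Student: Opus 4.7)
The plan is to apply the Graber--Harris--Starr theorem to the rational map $\chi$: a proper dominant morphism between smooth projective varieties, with rationally connected base and general fiber, has rationally connected total space. Since the Grassmannian $\G(h-2,N-1)$ is rational, the task reduces to analyzing $\chi$ and verifying that its general fiber is rationally connected.

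First I would identify the generic fiber of $\chi$. The hypothesis $h>N-\dim(X)+1$ gives $\dim(X\cap H)=h-1-(N-\dim X)\geq 1$ for a general $(h-1)$-plane $H$ through $p$, and Bertini applied to the linear system of $(h-1)$-planes through $p$ (whose base locus is the point $p$ alone) shows that $X\cap H$ is irreducible for general such $H$. Any $h$ sufficiently general points of $X\cap H$ span $H$, so $\chi$ is dominant, and its general fiber $\chi^{-1}([H])$ is the open locus of $\Hilb_h(X\cap H)$ parametrizing length-$h$ subschemes that span $H$; this is birational to the symmetric product $(X\cap H)^{(h)}$.

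Next I would verify rational connectedness of this fiber. Since $X\cap H$ is rationally connected by hypothesis, the Cartesian power $(X\cap H)^h$ is rationally connected (rational connectedness is stable under finite products), and the symmetric quotient $(X\cap H)^{(h)}$ inherits rational connectedness as the image of $(X\cap H)^h$ under the finite surjective morphism $(X\cap H)^h\to (X\cap H)^{(h)}$ (rational connectedness is preserved under dominant morphisms). Hence the general fiber of $\chi$ is rationally connected.

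To conclude, observe that the open locus of configurations genuinely spanning an $(h-1)$-plane through $p$ is irreducible: it fibers over $\G(h-2,N-1)$ via $\chi$ with irreducible fibers over an irreducible base. Hence every irreducible component $W$ of $VSP_H^X(h)$ meeting this locus dominates $\G(h-2,N-1)$ via $\chi$. Resolving the indeterminacy of $\chi|_W$ on a smooth projective model $\tilde W$ of $W$ yields a proper morphism onto the rationally connected base with rationally connected general fiber, and the Graber--Harris--Starr theorem gives that $\tilde W$---and hence $W$---is rationally connected. The main obstacle is the clean identification of the generic fiber of $\chi$ with the symmetric product (requiring the Bertini-type irreducibility for the general linear section) and ruling out possible extra components of $VSP_H^X(h)$ lying entirely in the non-spanning boundary.
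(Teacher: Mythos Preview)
Your proposal is correct and follows essentially the same route as the paper: construct the rational map $\chi$ to $\G(h-2,N-1)$, identify its general fiber as birational to the symmetric product $X_H^h/S_h$ of the linear section $X_H=X\cap H$, and conclude via Graber--Harris--Starr (Proposition~\ref{rcfib}). The concern you flag about extra components lying entirely in the non-spanning boundary is handled in the paper by Convention~\ref{conv}, which declares that $VSP_H^X(h)$ denotes the closure of the component of $VSP_H^X(h)^o$ containing a general decomposition; with that convention in place your argument is complete.
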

In Proposition \ref{ver}, with a similar argument, we prove that the irreducible components of the classical varieties of sums of power $VSP(F,h)$ of a general homogeneous polynomial $F\in k[x_0,...,x_n]_{d}$ are rationally connected as soon as $h\geq \frac{d(N+1)-n}{d}$.\\
Furthermore in Theorem \ref{RC2} we obtain a generalization of \cite[Theorem 4.1]{MMe} by considering an arbitrary unirational variety instead of the Veronese variety.
\begin{theoremn}
Let $X\subset\mathbb{P}^{N}$ be a unirational variety. Assume that for some positive integer $k<n$ the number $\overline{h} = \frac{N}{k+1}$ is an integer and 
$$\frac{N+n+2}{n+1} \leq\overline{h} < N-n+1.$$ 
Then the irreducible components of $VSP_{H}^{X}(h)$ are rationally connected for $h\geq \overline{h}$.
\end{theoremn}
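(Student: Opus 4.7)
The plan is to adapt the strategy of \cite[Theorem 4.1]{MMe}, replacing the role played there by the rational Veronese variety with the unirationality of $X$. Since $X$ is unirational of dimension $n$, there is a dominant rational map $\phi\colon \mathbb{P}^n \dasharrow X$, which induces a dominant, generically finite rational map
$$
\widetilde\phi\colon \mathrm{Sym}^h(\mathbb{P}^n) \dasharrow \mathrm{Sym}^h(X) \cong \Hilb_h(X).
$$
Let $W \subset \mathrm{Sym}^h(\mathbb{P}^n)$ denote the closure of $\widetilde\phi^{-1}\bigl(VSP_H^X(h)\bigr)$, so that $\widetilde\phi$ restricts to a dominant rational map $W \dasharrow VSP_H^X(h)$. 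Since rational connectedness is preserved by dominant rational maps of projective varieties, it suffices to show that each irreducible component of $W$ is rationally connected.

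To produce rational curves through a general point $\{y_1, \ldots, y_h\} \in W$, I would fix a subset $S \subset \{1, \ldots, h\}$ with $|S| = k+1$ and consider the (generically unique) $k$-plane $\Lambda_S \subset \mathbb{P}^n$ spanned by $\{y_i\}_{i \in S}$. Deforming the points $\{y_i\}_{i \in S}$ within $\mathrm{Sym}^{k+1}(\Lambda_S) \cong \mathrm{Sym}^{k+1}(\mathbb{P}^k)$ while keeping the remaining $y_j$ fixed and imposing
$$
p \in \langle \phi(y_1), \ldots, \phi(y_h)\rangle
$$
cuts out an incidence subvariety $\mathcal{Y}_S \subset \mathrm{Sym}^{k+1}(\Lambda_S)$. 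A dimension count using the hypotheses $\overline{h}(k+1) = N$ and $\overline{h}(n+1) \geq N+n+2$ shows that $\mathcal{Y}_S$ has positive expected dimension, and the argument of \cite[Theorem 4.1]{MMe}, with the rationality of $\Lambda_S \cong \mathbb{P}^k$ now playing the role of the rationality of $\mathbb{P}^n$ in the Veronese case, shows that $\mathcal{Y}_S$ is rationally connected. Varying $S$ over all $(k+1)$-subsets of $\{1, \ldots, h\}$ then provides rational curves in $W$ through $\{y_1, \ldots, y_h\}$ in enough independent directions to connect any two general points of the same irreducible component.

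For $h > \overline{h}$, the same construction applies: the additional $h - \overline{h}$ points left fixed impose no new constraint, and the rational curves coming from the deformations of $(k+1)$-subsets still connect generic configurations. The main technical obstacle is verifying that the incidence families $\mathcal{Y}_S$ are irreducible and rationally connected of the expected positive dimension; it is precisely here that both numerical hypotheses are needed. The equality $\overline{h}(k+1) = N$ aligns the $k$-plane structure with the ambient $\mathbb{P}^N$ (so that $\overline{h}$ general $k$-planes span a codimension one subspace), while $\overline{h}(n+1) \geq N + n + 2$ ensures that the incidence condition imposed by $p$ leaves strictly fewer constraints than available parameters, so that a rationally connected family of deformations remains.
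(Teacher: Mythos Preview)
Your construction has a genuine dimension problem. You move $k+1$ points on a fixed $k$-plane $\Lambda_S\subset\mathbb{P}^n$, giving a $k(k+1)$-dimensional parameter space, and then impose the incidence $p\in\langle\phi(y_1),\dots,\phi(y_h)\rangle$, which is a codimension $N-h+1$ condition in $\Hilb_h(X)$. For $h=\overline{h}$ the expected dimension of $\mathcal{Y}_S$ is
\[
k(k+1)-(N-\overline{h}+1)=k(k+1)-\overline{h}k-1,
\]
and this is frequently negative. Concretely, for $X=\G(1,4)\subset\mathbb{P}^9$ one has $n=6$, $k=2$, $\overline{h}=3$, and the expected dimension is $-1$: here $|S|=k+1=\overline{h}$, so you are asking for $p\in\sec_3(\phi(\Lambda_S))$, but $\sec_3$ of a surface in $\mathbb{P}^9$ has dimension at most $8$, so for general $p$ the locus $\mathcal{Y}_S$ is empty. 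Thus the family you propose does not exist in general, and the subsequent claim that varying $S$ yields ``enough independent directions'' has no content.

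The paper's argument is organized differently and this difference is essential. One moves \emph{all} $\overline{h}$ points simultaneously on a $k$-dimensional rational subvariety $Y\subset X$; then $\dim\sec_{\overline{h}}(Y)=\overline{h}(k+1)-1=N-1$, so $\sec_{\overline{h}}(Y)$ is a hypersurface and through its general point passes a unique $(\overline{h}-1)$-secant plane. The rational connectedness is not obtained by deforming points on a single $Y$, but by varying $Y$ itself over a large rationally connected family $V^n_{a,2\overline{h},n-k}\subset\Hilb(X)$ (built from the unirational parametrization) whose members pass through any prescribed set of $2\overline{h}$ general points of $X$. The map $Y\mapsto\sec_{\overline{h}}(Y)$ and the assignment of the unique secant plane through $p$ then produce a dominant rational map from a rationally connected source onto $VSP_G^X(\overline{h})$. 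The step up from $\overline{h}$ to $h>\overline{h}$ is handled separately by the chain argument (Proposition~\ref{p2}), not by the remark that extra fixed points ``impose no new constraint''. If you want to salvage your approach, you must replace the subsets $S$ of size $k+1$ by $k$-dimensional subvarieties carrying all $\overline{h}$ points and let those subvarieties vary; at that point you are essentially reproducing the paper's proof.
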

Finally in Section \ref{candec} we consider the cases when there exists a canonical decomposition. This means that there exists a positive integer $\overline{h}$ such that $\sec_{\overline{h}}(X) = \mathbb{P}^{N}$ and $VSP_{H}^{X}(\overline{h})$ is a single point, that is through a general point $p\in\sec_{\overline{h}}(X)$ passes exactly one $(\overline{h}-1)$-plane $\overline{h}$-secant to $X$. Our main result is Theorem \ref{uu}.
\begin{theoremn}
Let $X\subset\mathbb{P}^{N}$ be an irreducible rational variety. Assume that there exists a positive integer $\overline{h}$ such that $\sec_{\overline{h}}(X) = \mathbb{P}^{N}$ and $VSP_{H}^{X}(\overline{h})$ is a single point. Then $VSP_{H}^{X}(h)$ is unirational for any $h\geq\overline{h}$.
\end{theoremn}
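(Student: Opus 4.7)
The plan is to exhibit a dominant rational map from a rational variety onto $VSP_H^X(h)$, which immediately implies unirationality. Set $r=h-\overline{h}\geq 0$. The strategy is to represent a generic $h$-tuple parametrized by $VSP_H^X(h)$ as the union of $r$ arbitrary points of $X$ together with the canonical $\overline{h}$-secant decomposition of a single auxiliary point lying in the $r$-plane that these points span with $p$.

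Since $\sec_{\overline{h}}(X)=\P^{N}$ and $VSP_H^X(\overline{h})$ is a single point, there is a rational section
$$
\sigma : \P^{N} \dasharrow \Hilb_{\overline{h}}(X), \qquad q\mapsto \{x_1(q),\ldots,x_{\overline{h}}(q)\},
$$
sending a general $q\in\P^{N}$ to the unique $\overline{h}$-tuple on $X$ whose linear span is an $\overline{h}$-secant plane containing $q$. I then introduce the incidence variety
$$
\mathcal{Y} \;=\; \bigl\{\,(y_1,\ldots,y_r,q)\in X^r\times\P^{N} \;:\; q\in \Span{p,y_1,\ldots,y_r}\,\bigr\}.
$$
Over the open locus where $p,y_1,\ldots,y_r$ are in general position, the projection $\mathcal{Y}\to X^r$ is a $\P^r$-bundle trivialised by the coordinate $q=\mu_0\widetilde{p}+\sum_{i=1}^{r}\mu_i\widetilde{y}_i$, so $\mathcal{Y}$ is birational to $X^r\times\P^r$. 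Since $X$ is rational by hypothesis, $\mathcal{Y}$ is rational. I define
$$
\psi : \mathcal{Y} \dasharrow VSP_H^X(h), \qquad (y_1,\ldots,y_r,q) \longmapsto \{y_1,\ldots,y_r\}\cup \sigma(q).
$$
For $(y_1,\ldots,y_r,q)$ in a dense open subset one has $q\notin\Span{y_1,\ldots,y_r}$, and then $p\in \Span{y_1,\ldots,y_r,q}\subseteq \Span{y_1,\ldots,y_r,x_1(q),\ldots,x_{\overline{h}}(q)}$, placing the image in $VSP_H^X(h)$.

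It remains to check that $\psi$ is dominant. Given a general $\{z_1,\ldots,z_h\}\in VSP_H^X(h)$, choose any splitting $\{z_1,\ldots,z_h\}=\{y_1,\ldots,y_r\}\sqcup \{x_1,\ldots,x_{\overline{h}}\}$. The condition $p\in \Span{y_1,\ldots,y_r,x_1,\ldots,x_{\overline{h}}}$ produces, after selecting homogeneous lifts, a decomposition $\widetilde{p}=\widetilde{a}+\widetilde{q}$ with $\widetilde{a}\in\Span{\widetilde{y}_1,\ldots,\widetilde{y}_r}$ and $\widetilde{q}\in\Span{\widetilde{x}_1,\ldots,\widetilde{x}_{\overline{h}}}$. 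Then $(y_1,\ldots,y_r,q)\in\mathcal{Y}$, and since $q$ lies in the $\overline{h}$-secant plane $\Span{x_1,\ldots,x_{\overline{h}}}$, the uniqueness hypothesis forces $\sigma(q)=\{x_1,\ldots,x_{\overline{h}}\}$, so $\psi(y_1,\ldots,y_r,q)=\{z_1,\ldots,z_h\}$. The main obstacle is verifying that as $\{z_1,\ldots,z_h\}$ ranges in a dense open subset of $VSP_H^X(h)$ the reconstructed $q$ in fact lands in the regular locus of $\sigma$; this is a routine openness argument based on the openness of the unique-decomposition locus in $\P^{N}$ together with the dimension identity $\overline{h}(\dim X+1)=N+1$, which is forced by $\sec_{\overline{h}}(X)=\P^{N}$ and the fact that $VSP_H^X(\overline{h})$ is a single point.
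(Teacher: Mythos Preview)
Your argument is correct and follows the same route as the paper's proof: you build the incidence variety $\mathcal{Y}\subset X^{r}\times\P^{N}$ of pairs $(y_1,\ldots,y_r,q)$ with $q\in\Span{p,y_1,\ldots,y_r}$, observe it is rational because $X$ is rational and the projection to $X^{r}$ has linear fibers, and then map to $VSP_H^X(h)$ by adjoining to $\{y_1,\ldots,y_r\}$ the canonical $\overline{h}$-decomposition of $q$. The only cosmetic difference is that the paper verifies dominance by a dimension count (using $\overline{h}(n+1)=N+1$ to get $\dim\mathcal{Y}=h(n+1)-N-1=\dim VSP_H^X(h)$ and noting $\psi$ is generically finite), whereas you exhibit an explicit preimage of a general point; both amount to the same thing.
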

Furthermore we study the uniqueness of the decomposition when $X = V_{d}^{n}$ is a Veronese variety. The first results in this direction are due to \textit{J. J. Sylvester} \cite{Sy}, \textit{D. Hilbert} \cite{Hi}, \textit{H. W. Richmond} \cite{Ri}, and \textit{F. Palatini} \cite{Pa}. In the last few years this problem has been studied in \cite{Me1} and \cite{Me2}. As widely expected, the canonical decomposition very seldom exists \cite[Theorem 1]{Me2}. However it is known that a general homogeneous polynomial $F\in k[x_{0},...,x_{n}]_{d}$ admits a canonical decomposition as a sum of $d$-th powers of linear forms in the following cases.
\begin{itemize}
\item[-] $n = 1$, $d = 2h-1$ \cite{Sy},
\item[-] $n = d = 3$, $h = 5$ \cite{Sy},
\item[-] $n = 2$, $d = 5$, $h = 7$ \cite{Hi}.
\end{itemize}
In Proposition \ref{sy1} and Theorems \ref{hi}, \ref{sy} we give very simple and geometrical proofs of these three facts. Furthermore in each one of the listed cases we give an algorithm to reconstruct the decomposition of a given polynomial.

\section{Notation and Preliminaries}
We work over an algebraically closed field $k$ of characteristic zero. 
\subsubsection*{Varieties of sums of powers and secant varieties}
Let $X\subset\P^N$ be an irreducible and reduced non-degenerate variety and let $\Hilb_{h}(X)$ be the Hilbert scheme parametrizing zero-dimensional subschemes of $X$ of length $h$. 
\begin{Definition}\label{vsph}
Let $p\in\mathbb{P}^{N}$ be a general point. We define 
$$VSP_H^X(p,h)^{o} := \{\{x_{1},...,x_{h}\}\in\Hilb_{h}(X)\: | \: p\in \langle x_{1},...,x_{h}\rangle\}\subseteq \Hilb_{h}(X),$$
and
$$VSP_H^X(p,h) := \overline{VSP_H^X(p,h)^{o}}.$$
by taking the closure of $VSP_H^X(p,h)^{o}$ in $\Hilb_{h}(X)$. When there is no danger of confusion, we write simply $VSP_H^X(h)$ for $VSP_H^X(p,h)$.
\end{Definition}

Let $\nu_{d}^{n}:\mathbb{P}^{n}\rightarrow\mathbb{P}^{N(n,d)}$, with $N(n,d) =\binom{n+d}{d}-1$ be the Veronese embedding induced by $\mathcal{O}_{\mathbb{P}^{n}}(d)$, and let $V_{d}^{n} = \nu_{d}^{n}(\mathbb{P}^{n})$ be the corresponding Veronese variety. Note that when $X = V_{d}^{n}$ we recover the classical variety of sums of powers $VSP_H^X(h) = VSP(F,h)$ parametrizing additive decompositions of a general homogeneous polynomial $F\in k[x_0,...,x_n]_{d}$ as sum of $d$-powers of linear forms, see \cite{Do}.

\begin{Proposition}\label{dim}
Assume the general point $p\in\mathbb{P}^{N}$ to be contained in a $(h-1)$-linear space $h$-secant to $X$. Then the variety $VSP_H^X(h)$ has dimension 
$$\dim(VSP_{H}^{X}(h)) = h(n+1)-N-1.$$
Furthermore if $n = 2$ and $X$ is a smooth surface then for $p$ varying in an open Zariski subset of $\mathbb{P}^{N}$ the varieties $VSP_{H}^{X}(h)$ are smooth and irreducible.
\end{Proposition}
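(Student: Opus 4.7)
The plan is to realize $VSP_H^X(p,h)$ as the general fiber of a natural projection and invoke the fiber dimension theorem. I would introduce the incidence variety
\[
\mathcal{I} := \overline{\bigl\{(\xi,p)\in \Hilb_h(X)\times\mathbb{P}^N \mid p\in\langle\xi\rangle\bigr\}}
\]
with its two projections $\pi_1\colon\mathcal{I}\to\Hilb_h(X)$ and $\pi_2\colon\mathcal{I}\to\mathbb{P}^N$. On the open locus $U\subset\Hilb_h(X)$ parametrizing $h$ distinct points of $X$ in linearly general position, $\pi_1$ restricts to a $\mathbb{P}^{h-1}$-bundle; since $U$ is irreducible of dimension $hn$, it follows that $\mathcal{I}$ is irreducible of dimension $hn+(h-1) = h(n+1)-1$. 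By hypothesis a general $p\in\mathbb{P}^N$ lies on an $h$-secant $(h-1)$-plane, so $\pi_2$ is dominant with image of dimension $N$. Since $\pi_2^{-1}(p) = VSP_H^X(p,h)$ by definition, the fiber dimension theorem yields the claimed $\dim VSP_H^X(p,h) = h(n+1) - N - 1$.

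Assume now $n=2$ and $X$ is a smooth surface. By Fogarty's theorem $\Hilb_h(X)$ is smooth and irreducible of dimension $2h$, whence $\mathcal{I}$ is irreducible and smooth on a dense open subset. Generic smoothness of the dominant morphism $\pi_2$ (valid in characteristic zero) then yields smoothness of $VSP_H^X(p,h) = \pi_2^{-1}(p)$ for $p$ in a Zariski open subset of $\mathbb{P}^N$.

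The hard part is promoting smoothness to irreducibility, because a generically smooth proper morphism can have fibers that decompose as disjoint unions of smooth components. To rule this out I would pass to the Stein factorization $\mathcal{I}\xrightarrow{g}Y\xrightarrow{f}\mathbb{P}^N$ of $\pi_2$: here $Y$ is normal and irreducible and $f$ is finite, and smoothness of the general fiber of $\pi_2$ forces $f$ to be \'etale over an open of $\mathbb{P}^N$. Combining purity of the branch locus with the algebraic simple connectedness of $\mathbb{P}^N$ then forces $f$ to have degree one, so the general fiber of $\pi_2$ is connected; being also smooth, it is irreducible.
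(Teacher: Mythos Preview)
Your dimension count and the smoothness argument via Fogarty's theorem and generic smoothness are essentially the same as the paper's. The paper, too, constructs the incidence variety, observes that it is a $\mathbb{P}^{h-1}$-bundle over a dense open of $\Hilb_h(X)$, and reads off the dimension of the general fiber of the second projection; for $n=2$ it invokes smoothness of $\Hilb_h(X)$ to get smoothness of $\mathcal{I}$ and hence of the general fiber.

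You are right to flag irreducibility as the delicate point---the paper's own argument is terse here---but your proposed fix does not work. The Stein factorization $\mathcal{I}\xrightarrow{g}Y\xrightarrow{f}\mathbb{P}^N$ does produce a finite map $f$ that is \'etale over a nonempty open, but purity of the branch locus only says that the non-\'etale locus of $f$, when nonempty, is pure of codimension one; it does \emph{not} force that locus to be empty. Simple connectedness of $\mathbb{P}^N$ only rules out covers that are \'etale everywhere, and there are plenty of finite branched covers $Y\to\mathbb{P}^N$ of degree $>1$ with $Y$ normal (even smooth) that are \'etale over a dense open---any double cover branched along a smooth hypersurface will do. So from ``$f$ is \'etale over an open'' together with purity and $\pi_1^{\mathrm{\acute{e}t}}(\mathbb{P}^N)=1$ you cannot conclude $\deg f=1$. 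To make a Stein-factorization approach go through you would need an independent reason why the locus in $\mathbb{P}^N$ over which $\pi_2$ fails to be smooth has codimension at least two (so that $f$ is \'etale in codimension one and then, with $Y$ smooth, \'etale everywhere by Zariski--Nagata), and nothing in your argument supplies that.
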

\begin{proof}
Consider the incidence variety 
  \[
  \begin{tikzpicture}[xscale=1.5,yscale=-1.5]
    \node (A0_1) at (1, 0) {$\mathcal{I} = \{(Z,p)\: | \: Z\in VSP_{H}^{X}(h)\}\subseteq \Hilb_{h}(X)\times \mathbb{P}^{N}$};
    \node (A1_0) at (0, 1) {$\Hilb_{h}(X)$};
    \node (A1_2) at (2, 1) {$\mathbb{P}^{N}$};
    \path (A0_1) edge [->]node [auto] {$\scriptstyle{\psi}$} (A1_2);
    \path (A0_1) edge [->]node [auto,swap] {$\scriptstyle{\phi}$} (A1_0);
  \end{tikzpicture}
  \]
The morphism $\phi$ is surjective and there exists an open subset $U\subseteq \Hilb_{h}(X)$ such that for any $Z\in U$ the fiber $\phi^{-1}(Z)$ is isomorphic to the Grassmannian $\mathbb{P}^{h-1}$, so $\dim(\phi^{-1}(Z)) = h-1$. The fibers of $\psi$ are the varieties $VSP_{H}^{X}(h)$. Under our hypothesis the morphism $\psi$ is dominant and 
$$\dim(VSP_{H}^{X}(h)) = \dim(\mathcal{I})-N = h(n+1)-N-1.$$
If $n = 2$ and $X$ is a smooth surface then $\Hilb_{h}(X)$ is smooth. The fibers of $\phi$ over $U$ are open Zariski subsets. So $\mathcal{I}$ is smooth and irreducible. Since the varieties $VSP_{H}^{X}(h)$ are the fibers of $\psi$ we conclude that for $p$ varying in an open Zariski subset of $\mathbb{P}^{N}$ the varieties $VSP_{H}^{X}(h)$ are smooth and irreducible.
\end{proof}

Now, we want to define another compactification of $VSP_H^X(h)^{o}$ in the Grassmannian $\G(h-1,N)$. In order to do this we need to introduce secant varieties. Let $X\subset\P^N$ be an irreducible and reduced non-degenerate variety and let
$$\Gamma_h(X)\subset X\times \dots \times X\times\G(h-1,N)$$
be the reduced closure of the graph of
$$\alpha: X\times  \dots \times X \dasharrow \G(h-1,N),$$
taking $h$ general points to their linear span $\langle x_1, \dots , x_{h}\rangle$. Now, $\Gamma_h(X)$ is irreducible and reduced of dimension $hn$. Let $\pi_2:\Gamma_h(X)\to\G(h-1,N)$ be the natural projection. We denote 
$$\mathcal{S}_h(X):=\pi_2(\Gamma_h(X))\subset\G(h-1,N).$$
Note that $\mathcal{S}_h(X)$ is irreducible and reduced of dimension $hn$. Finally, let
$$\mathcal{I}_h=\{(x,\Lambda) \: | \: x\in \Lambda\} \subset\P^N\times\G(h-1,N)$$
with the projections $\pi_h$ and $\psi_h$ onto the factors.\\
The {\it abstract $h$-secant variety} is the irreducible and reduced variety
$$\Sec_{h}(X):=(\psi_h)^{-1}(\mathcal{S}_h(X))\subset \mathcal{I}_h.$$
The {\it $h$-secant variety} is
$$\sec_{h}(X):=\pi_h(Sec_{h}(X))\subset\P^N.$$
The variety $\Sec_{h}(X)$ has dimension $(hn+h-1)$ and a natural $\P^{h-1}$-bundle structure on $\mathcal{S}_h(X)$. The variety $X$ is \textit{$h$-defective} if $\dim\sec_{h}(X)<\min\{\dim\Sec_{h}(X),N\}$.

\begin{Remark}
Note that in Proposition \ref{dim} the assumption that the general point $p\in\mathbb{P}^{N}$ is contained in a $(h-1)$-linear space $h$-secant to $X$  can be rephrased as $\sec_{h}(X) = \mathbb{P}^{N}$. 
\end{Remark}

We need to extend these notions to the relative case. Let $S$ be a noetherian scheme and let $X\rightarrow S$ be a scheme over $S$ such that there exists a coherent sheaf $E$ on $S$ with a closed embedding of $X$ in $\mathbb{P}(E):= \Proj Sym_{\mathcal{O}_{S}} (E)$ over $S$. Equivalently, we can assume that there exists a relatively ample line bundle $L$ on $X$ over $S$.\\
There exists a scheme $\Grass(h,E)$ that finely parametrizes locally free sub-sheaves of rank $h$ of $E$. Furthermore, $\Grass(h,E)$ is projective over $S$.\\
Now suppose that $E$ is a rank $N+1$ vector bundle and the fiber of the morphism $\Grass(h,E)\rightarrow S$ over a closed point $s\in S$ is the Grassmannian $\Grass(h,E_{s})\iso\G(h,N)$, where $E_{s}$ is the fiber of $E$ over $s\in S$. There is a well-defined rational map over $S$
\[
\begin{tikzpicture}[xscale=2.5,yscale=-1.2]
    \node (A0_0) at (0, 0) {$X\times_{S}  \dots \times_{S} X$};
    \node (A0_2) at (2, 0) {$\Grass(h,E)$};
    \node (A1_1) at (1, 1) {$S$};
    \path (A0_0) edge [->]node [auto] {$\scriptstyle{}$} (A1_1);
    \path (A0_2) edge [->]node [auto] {$\scriptstyle{}$} (A1_1);
    \path (A0_0) edge [->,dashed]node [auto] {$\scriptstyle{\alpha}$} (A0_2);
\end{tikzpicture}
\]
mapping $(x_{1}, \dots ,x_{h})$ to the linear span $\langle x_{1}, \dots ,x_{h}\rangle$. Note that since $\alpha$ is a map over $S$, we are taking $x_{i}\in X_{s}\subset\mathbb{P}(E_{s})\cong\mathbb{P}^{N}$ for some $s\in S$. Take $\Gamma_{h}^{S}(X)$ to be the reduced closure of the graph of $\alpha$ in $X\times_{S} \dots \times_{S}X\times_{S}\Grass(h,E)$; then $\Gamma_{h}^{S}(X)$ is irreducible and reduced of dimension $hn$ over $S$.\\
Let $\pi:\Gamma_{h}^{S}(X)\rightarrow\Grass(h,E)$ be the projection, denoted by
$$\mathcal{S}_{h}^{S}(X):= \pi(\Gamma_{h}^{S}(X))\subseteq\Grass(h,E).$$
Again $\mathcal{S}_{h}^{S}(X)$ is irreducible and reduced of dimension $hn$ over $S$, where $n = \dim_{S}(X)$. Now consider the incidence correspondence
  \[
  \begin{tikzpicture}[xscale=1.5,yscale=-1.2]
    \node (A0_1) at (1, 0) {$\mathcal{I}_{h}^{S} := \{(z,F) \: | \: z\in F\}\subseteq\mathbb{P}(E)\times_{S}\Grass(h,E)$};
    \node (A1_0) at (0, 1) {$\mathbb{P}(E)$};
    \node (A1_2) at (2, 1) {$\Grass(h,E)$};
    \node (A2_1) at (1,2) {$S$};
    \path (A0_1) edge [->]node [auto] {$\scriptstyle{\psi_h}$} (A1_2);
    \path (A0_1) edge [->]node [auto,swap] {$\scriptstyle{\pi_h}$} (A1_0);
    \path (A1_0) edge [->]node [auto] {} (A2_1);
    \path (A1_2) edge [->]node [auto] {} (A2_1);
  \end{tikzpicture}
  \]
Let $X\rightarrow S$ be an irreducible and reduced scheme over $S$, together with a closed embedding in $\P(E)$. The \textit{abstract relative $h$-secant variety} of $X$ over $S$ is
$$\Sec_{h}^{S}(X):= \psi_h^{-1}(\mathcal{S}_{h}^{S}(X))\subseteq \mathcal{I}_{h}^{S}$$
and the \textit{relative $h$-secant variety} of $X$ over $S$ is 
$$\sec_{h}^{S}(X):= \pi_h(\Sec_{h}^{S}(X))\subseteq\mathbb{P}(E).$$
\begin{Remark}\label{fam}
The scheme $\sec_{h}^{S}(X)$ naturally comes with a morphism $\sec_{h}^ {S}(X)\rightarrow S$ whose fiber over a closed point $s\in S$ is the $h$-secant variety $\sec_{h}(X_{s}) \subseteq\mathbb{P}(E_{s}) \cong \mathbb{P}^{N}$ of the fiber $X_{s}$ of $X\rightarrow S$ over $s\in S$.
\end{Remark}

\begin{Definition}\label{vspg} 
Let $X\subset\P^N$ be an irreducible non-degenerate variety of dimension $n$ and let $p\in\P^N$ be a general point. For $h+n<N+1$ consider the $h$-secant map $\pi_h:Sec_{h}(X)\to\P^N$ and define
$$VSP_G^X(p,h):=\pi_h^{-1}(p).$$
When there is no danger of confusion, we write simply $VSP_G^X(h)$ for $VSP_G^X(p,h)$.
\end{Definition}

\begin{Remark}\label{comp}
Let us compare the two compactifications $VSP_H^X(h)$ and $VSP_G^X(h)$. A general point of $VSP_G^X(h)$ corresponds to a $(h-1)$-linear space $h$-secant to $X$ and passing though $p\in\mathbb{P}^{N}$. Clearly there is a dominant rational map
$$
\begin{array}{cccc}
\tau: & VSP_H^X(h) & \dasharrow & VSP_G^X(h)\\
 & \{x_{1},...,x_{h}\} & \longmapsto & \langle x_{1},...,x_{h}\rangle
\end{array}
$$
Furthermore if $h+n<N+1$ the general $(h-1)$-linear space intersects $X$ in a subscheme consisting of $h$ distinct points, so $\tau:VSP_H^X(h)\dasharrow VSP_G^X(h)$ is birational.
\end{Remark}

\begin{Remark}
The inequality $h+n<N+1$ of Definition \ref{vspg} is not a restriction for our purposes. Indeed when we use the compactification $VSP_G^X(h)$ we begin by studying $VSP_G^X(\overline{h})$ for a particular value $\overline{h}$ of $h$ satisfying this inequality and then we extend our conclusions for $h\geq\overline{h}$ using Construction \ref{chain}.
\end{Remark}

\subsubsection*{Rational connectedness}
We say that a variety $X$ is \textit{rationally chain connected} if there is a family of proper and connected algebraic curves $g:U\rightarrow Y$ whose geometric fibers have only rational components with a morphism $\nu:U\rightarrow X$ such that
$$\nu\times\nu:U\times_Y U\rightarrow X\times X$$
is dominant. Note that the image of $\nu\times\nu$ consists of pairs $(x_1,x_2)\in X$ such that $x_1,x_2\in u(\pi^{-1}(y))$ for some $y\in Y$, where $\pi:U\times_Y U\rightarrow Y$ is the projection. We say that $X$ is \textit{rationally connected} if there is a family of proper and connected algebraic curves $g:U\to Y$ whose geometric fibers are irreducible rational curves with morphism $\nu:U\to X$ such that $\nu\times\nu$ is dominant, see \cite[Definition IV.3.2]{Ko}.\\
A proper variety $X$ over an algebraically closed field is rationally chain connected if there is a chain of rational curves through any two general points $x_1,x_2\in X$. The variety $X$ is rationally connected if there is an irreducible rational curve through any two general points $x_1,x_2\in X$. If $X$ is smooth these two notions are indeed equivalent, see \cite[Theorem IV. 3.10]{Ko}. This is clearly false when $X$ is singular. For instance the cone $C_{E}$ over an elliptic curve $E$ is rationally chain connected but it is not rationally connected.\\
Furthermore rational connectedness is a birational property and indeed if $X$ is rationally connected and $X\dasharrow Y$ is a dominant rational map then $Y$ is rationally connected. On the other hand rational chain connectedness is not a birational property. For instance the cone $C_{E}$ is rationally chain connected by chains of length two but its resolution $\widetilde{C}_{E}$ is a fibration over $E$ with fibers $\mathbb{P}^{1}$ so it is not rationally chain connected. Finally for rational fibrations the rational connectedness of the base and of the general fiber translates in the rational connectedness of the variety itself.
\begin{Proposition}\label{rcfib}
Let $\phi:X\dasharrow Y$ be a dominant rational map. If $Y$ and the general fiber of $\phi$ are rationally connected then $X$ is rationally connected.
\end{Proposition}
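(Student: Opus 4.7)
The plan is to reduce to the case of a proper surjective morphism between smooth projective varieties and then apply the Graber--Harris--Starr theorem. Since rational connectedness is a birational invariant, I first replace $X$ and $Y$ by smooth projective birational models and resolve the indeterminacy locus of $\phi$; I may thus assume that $\phi\colon X\to Y$ is a surjective morphism between smooth projective varieties. The general fiber of the new $\phi$ is birational to the general fiber of the original, and hence is still rationally connected, while $Y$ is still rationally connected.

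Next, I pick two general points $x_1,x_2\in X$ and set $y_i=\phi(x_i)$. Since $Y$ is rationally connected, I can join $y_1$ and $y_2$ by a smooth rational curve $C\cong\mathbb{P}^1\subseteq Y$. I form the base change $X_C=X\times_Y C$ and pass to a resolution of singularities $\widetilde{X}_C\to X_C$, obtaining a proper morphism $\widetilde{\phi}\colon\widetilde{X}_C\to\mathbb{P}^1$. For $C$ chosen sufficiently general through $y_1,y_2$, the general fiber of $\widetilde{\phi}$ is birational to a general fiber of $\phi$, hence rationally connected.

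Now I apply the Graber--Harris--Starr theorem in its refined form: a proper family of rationally connected varieties over a smooth curve admits a section, and the section can be prescribed to pass through any finite set of points lying in smooth fibers. This produces a section $s_1\colon\mathbb{P}^1\to\widetilde{X}_C$ through a lift $\widetilde{x}_1$ of $x_1$, and a section $s_2\colon\mathbb{P}^1\to\widetilde{X}_C$ through a lift $\widetilde{x}_2$ of $x_2$. The points $s_1(y_2)$ and $\widetilde{x}_2$ both lie in the smooth rationally connected fiber $\widetilde{\phi}^{-1}(y_2)$, so they can be joined by a rational curve inside this fiber. Concatenating $s_1$, this curve in the fiber over $y_2$, and $s_2$ yields a chain of rational curves in $\widetilde{X}_C$, and hence in $X$, connecting $x_1$ and $x_2$; consequently $X$ is rationally chain connected.

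Since our reduced model of $X$ is smooth and projective, rational chain connectedness coincides with rational connectedness by \cite[Theorem IV.3.10]{Ko}, which finishes the argument. The main obstacle is the control of the base change: one has to choose $C$ general enough so that the fibers of $\widetilde{\phi}$ over $y_1$, $y_2$ and over at least one intermediate point are smooth and rationally connected, and one has to invoke the sharp version of Graber--Harris--Starr allowing sections through prescribed smooth-fiber points, rather than the bare existence of a section.
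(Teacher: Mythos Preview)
Your argument is correct, but it takes a more laborious route than the paper. The paper's proof consists of exactly two steps: resolve the indeterminacy of $\phi$ to obtain a dominant morphism $\widetilde{\phi}\colon\widetilde{X}\to Y$, and then invoke \cite[Corollary 1.3]{GHS} directly, which already asserts that the total space of a dominant morphism with rationally connected base and rationally connected general fiber is rationally connected. Since $\widetilde{X}\to X$ is birational, $X$ inherits rational connectedness.

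What you have done, in effect, is to reprove \cite[Corollary 1.3]{GHS} from the main section theorem of Graber--Harris--Starr: you base-change to a rational curve in $Y$, apply the existence of sections through prescribed smooth-fiber points, and glue. This is a valid and instructive derivation, and it has the merit of being more self-contained, but it is unnecessary once the corollary is available. Two small remarks on your write-up: the section $s_2$ is superfluous, since $s_1$ together with a rational curve in the fiber over $y_2$ already joins $\widetilde{x}_1$ to $\widetilde{x}_2$; and your extra reduction of $Y$ to a smooth projective model is harmless but not needed for the paper's version, which only resolves the map on the source side.
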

\begin{proof}
Let us consider a resolution of the indeterminacy 
 \[
  \begin{tikzpicture}[xscale=2.8,yscale=-1.5]
    \node (A0_0) at (0, 0) {$\widetilde{X}$};
    \node (A1_0) at (0, 1) {$X$};
    \node (A1_1) at (1, 1) {$Y$};
    \path (A0_0) edge [->,swap] node [auto] {$\scriptstyle{\pi}$} (A1_0);
    \path (A1_0) edge [->,dashed] node [auto] {$\scriptstyle{\phi}$} (A1_1);
    \path (A0_0) edge [->] node [auto] {$\scriptstyle{\widetilde{\phi}}$} (A1_1);
  \end{tikzpicture}
  \]
of the rational map $\phi$. Now, $\widetilde{\phi}:\widetilde{X}\rightarrow Y$ is a dominant morphism with rationally connected general fiber. Then, by \cite[Corollary 1.3]{GHS} $\widetilde{X}$ is rationally connected. Finally the morphism $\pi:\widetilde{X}\rightarrow X$ is birational and $X$ is rationally connected as well. 
\end{proof}

The rational connectedness of a projective variety $X\subset\mathbb{P}^{N}$ is related to the fact that $X$ has low degree. For instance a smooth hypersurface $X\subset\mathbb{P}^{N}$ of degree $d$ is rationally connected if and only if $d\leq N$. In this context we will need the following theorem.

\begin{Theorem}\cite[Theorem 3.1]{MaM}\label{as}
Let $X\subset\mathbb{P}^{N}$ be a variety set theoretically defined by homogeneous polynomials $G_{i}$ of degree $d_{i}$, for $i = 1,..,m$, and let $l\geq 2$ be an integer. If
$$\sum_{i=1}^{m}d_{i}\leq \frac{N(l-1)+m}{l}$$
then $X$ is rationally chain connected by chains of lines of length at most $l$.\\ 
In particular if $X$ is smooth and the above inequality is satisfied then $X$ is rationally connected by rational curves of degree at most $l$.
\end{Theorem}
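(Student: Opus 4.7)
The plan is to produce, through a general point $x\in X$, a positive-dimensional family of lines contained in $X$, and then to iterate in order to connect any two general points of $X$ by a chain of at most $l$ such lines. First I would unravel the numerical hypothesis: the inequality $\sum_{i=1}^{m}d_{i}\leq\frac{N(l-1)+m}{l}$ is equivalent to $l(N-\sum_{i=1}^{m}d_{i})\geq N-m$, and since Krull's Hauptidealsatz gives $\codim(X)\leq m$, that is $\dim X\geq N-m$, I obtain the driving inequality
\[
l\cdot d\geq \dim X, \qquad d:=N-\sum_{i=1}^{m}d_{i}.
\]

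Next, for a general $x\in X$, I would analyse the Fano scheme $F_{x}\subseteq\mathbb{P}^{N-1}$ of lines in $\mathbb{P}^{N}$ through $x$ that are contained in $X$. Parametrising such a line as $\{[x+tb]\}$, the condition that each $G_{i}$ vanishes identically on the line is automatic at $t=0$ (since $G_{i}(x)=0$), while the remaining coefficients in $t$ impose exactly $d_{i}$ polynomial conditions on $b\in\mathbb{P}^{N-1}$. Summing over $i$, I obtain $\dim F_{x}\geq N-1-\sum_{i=1}^{m}d_{i}=d-1$, so the union $R_{1}(x)\subseteq X$ of lines in $X$ through $x$ has $\dim R_{1}(x)\geq d$.

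I would then iterate by setting $R_{k}(x):=\bigcup_{y\in R_{k-1}(x)}R_{1}(y)$ and, working on the incidence variety of chains
\[
I_{k}:=\{(x_{0},\ldots,x_{k})\in X^{k+1}\: | \: x_{i-1}\text{ and }x_{i}\text{ lie on a common line contained in }X\},
\]
prove by induction on $k$ that $\dim R_{k}(x)\geq \min(\dim X,\, k\cdot d)$ for general $x$. Combined with the inequality $l\cdot d\geq \dim X$, this yields $R_{l}(x)=X$, so any two general points of $X$ are joined by a chain of at most $l$ lines in $X$, establishing rational chain connectedness by such chains. For the smooth case I would invoke Koll\'ar's theorem -- rational chain connectedness implies rational connectedness on smooth proper varieties -- together with standard bend-and-break arguments bounding the degree of the connecting rational curve in terms of the chain length.

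The main obstacle is the inductive dimension estimate for $R_{k}(x)$: one must rule out that the locus stabilises at some proper subvariety $Y\subsetneq X$ before filling up $X$. If $Y=R_{k}(x)$ were invariant under the $R_{1}$-operation, then every line of $X$ through a general point $y\in Y$ would have to lie in $Y$, and the $d$-dimensional family $R_{1}(y)\subseteq Y$ would impose rigid tangency constraints on $Y$ incompatible with the numerical hypothesis. Handling this step carefully -- either via a transversality argument on $I_{k}$, or by analysing the rational quotient map attached to the chain-equivalence relation -- is the technical heart of the proof.
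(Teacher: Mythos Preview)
This theorem is not proved in the present paper; it is quoted from \cite[Theorem~3.1]{MaM} and used only as a black box. There is therefore no in-paper argument against which to compare your proposal.

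On its own merits, your outline has a concrete error at the first step. From the rewriting $l(N-\sum_i d_i)\ge N-m$ together with the Hauptidealsatz bound $\dim X\ge N-m$ you claim to obtain the ``driving inequality'' $l\cdot d\ge\dim X$. But these are two lower bounds, by the \emph{same} number $N-m$, on two \emph{different} quantities, and they cannot be chained: knowing $ld\ge N-m$ and $\dim X\ge N-m$ says nothing about whether $ld\ge\dim X$. A concrete instance: take $N=5$, $m=2$, and $G_1=G_2$ a smooth quadric. Then $\sum d_i=4$, $d=1$, the numerical hypothesis is satisfied for $l=3$, and yet $\dim X=4>3=ld$. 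The conclusion of the theorem is still true here (any two points of a smooth quadric fourfold are joined by a chain of two lines), but not via your inequality.

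This is not cosmetic, because your entire inductive scheme is calibrated so that $l$ steps of size $d$ reach $\dim X$; the ``main obstacle'' you correctly flag---stabilisation of $R_k(x)$ at a proper subvariety---is controlled precisely by $ld\ge\dim X$. Once that inequality fails, the estimate $\dim R_k(x)\ge kd$ no longer forces $R_l(x)=X$, exactly in the situation where the $G_i$ are not a regular sequence. A repair requires either reducing to the complete-intersection case $\dim X=N-m$ (delicate, since discarding redundant equations changes $m$ and $\sum d_i$ simultaneously and one must check the hypothesis survives), or proving a sharper growth estimate for $R_k(x)$ that is insensitive to redundancies among the $G_i$. Either way, that is additional work beyond what your sketch contains.
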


Note that the variety $X$ of Theorem \ref{as} is a general projective variety, which does not need to be neither smooth nor a complete intersection. 

\section{Varieties of minimal degree and rationality}\label{smd}
There is a lower bound on the degree of an irreducible, reduced and non-degenerate variety $X\subset\mathbb{P}^{N}$.

\begin{Proposition}
If $X\subset\mathbb{P}^{N}$ is an irreducible, reduced and non-degenerate variety, then $\deg(X)\geq\codim(X)+1$. 
\end{Proposition}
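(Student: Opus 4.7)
The plan is to proceed by induction on $n = \dim(X)$, reducing via general hyperplane sections to the zero-dimensional case, where the inequality is essentially the definition of non-degeneracy.

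For the base case $n = 0$, the variety $X$ is a finite reduced collection of points that spans $\mathbb{P}^N$; since at least $N+1$ linearly independent points are required to span, one has $\deg(X) = |X| \geq N+1 = \codim(X) + 1$.

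For the inductive step with $n \geq 1$, I would cut $X$ by a general hyperplane $H \subset \mathbb{P}^N$ and set $Y := X \cap H \subset H \cong \mathbb{P}^{N-1}$. By Bertini, $Y$ is a reduced subvariety of $H$ of dimension $n-1$ with $\deg(Y) = \deg(X)$, and $Y$ is irreducible when $n \geq 2$. The crucial intermediate step is to show that $Y$ is non-degenerate in $H$. To see this, I would use that $X$ spans $\mathbb{P}^N$ to pick $N+1$ points $p_0,\ldots,p_N \in X$ in general linear position; then the hyperplane $H_0 := \langle p_1,\ldots,p_N\rangle$ satisfies $X \cap H_0 \supseteq \{p_1,\ldots,p_N\}$, and the right hand side already spans $H_0$. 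Thus there exists at least one hyperplane whose intersection with $X$ is non-degenerate in it; since this is an open condition on $H \in (\mathbb{P}^N)^\vee$, the general hyperplane section is non-degenerate. Applying the inductive hypothesis to $Y \subset H$ then yields
$$\deg(X) = \deg(Y) \geq \codim_H(Y) + 1 = (N-1)-(n-1)+1 = N-n+1 = \codim(X)+1.$$

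The main subtlety lies in justifying the non-degeneracy of the general hyperplane section; everything else is a formal consequence of Bertini's theorem. The elementary argument above, selecting $N+1$ points of $X$ in general linear position and forming the hyperplane they span minus one, sidesteps any refined statement about linear series and keeps the proof self-contained.
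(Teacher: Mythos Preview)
Your argument is correct and takes a genuinely different route from the paper's. The paper inducts on $\codim(X)$ by projecting from a general point $x\in X$: the image $Y=\overline{\pi_x(X)}\subset\mathbb{P}^{N-1}$ satisfies $\deg(Y)=\deg(X)-1$ and $\codim(Y)=\codim(X)-1$, with base case a non-degenerate hypersurface. You instead induct on $\dim(X)$ by cutting with a general hyperplane, which keeps both degree and codimension fixed, landing eventually on a spanning finite set. Your approach makes the key intermediate step (non-degeneracy of $X\cap H$) completely elementary via the $N+1$ points in general position, whereas the paper's proof quietly uses that projection from a general smooth point is birational onto its image --- classical, but not as immediate. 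Conversely, the paper's projection preserves irreducibility automatically, while in your argument you should note explicitly that the base case $n=0$ and the step from $n=1$ to $n=0$ drop the irreducibility hypothesis: an irreducible zero-dimensional scheme is a single point, and the hyperplane section of a curve is a finite set. This is harmless because your argument in fact establishes the inequality for reduced, non-degenerate, equidimensional (possibly reducible) $X$, which is what the induction actually needs; it is worth stating this strengthened hypothesis at the outset.
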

\begin{proof}
If $\codim(X) = 1$, being $X$ non-degenerate, we have $\deg(X)\geq 2 = \codim(X)+1$. We proceed by induction on $\codim(X)$. Let $x\in X$ be a general point, and 
$$\pi_{x}:\mathbb{P}^{N}\dasharrow\mathbb{P}^{N-1}$$
be the projection from $x$. The variety $Y = \overline{\pi_{x}(X)}\subset\mathbb{P}^{N-1}$ has degree $\deg(Y) = \deg(X)-1$, and codimension $\codim(Y) = \codim(X)-1$. By induction hypothesis we have $\deg(Y)\geq\codim(Y)+1$, which implies $\deg(X)\geq\codim(X)+1$.
\end{proof}

\begin{Definition}
We say that an irreducible, reduced and non-degenerate variety $X\subset\mathbb{P}^{N}$ is a \textit{variety of minimal degree} if $\deg(X) = \codim(X)+1$.
\end{Definition}

If $\codim(X) = 1$ then $X$ is a quadric hypersurface, and then classified by its dimension and its singular locus. In higher codimension we have that if $X\subset\mathbb{P}^{N}$ is a singular variety of minimal degree, then $X$ is a cone over a smooth such variety. If $X$ is smooth and $\codim(X)\geq 2$, then $X$ is either a rational normal scroll or the Veronese surface $V^{2}_{2}\subset\mathbb{P}^{5}$. For a survey on varieties of minimal degree see \cite{EH}.

\begin{Theorem}\label{mindeg}
Let $X\subset\mathbb{P}^{N}$ be a variety of minimal degree $\deg(X)=d$. Then $VSP_{H}^{X}(h)$ is irreducible for any $h\geq d$. Furthermore $VSP_{H}^{X}(h)$ is rational if $h = d$, and unirational for any $h\geq d$. 
\end{Theorem}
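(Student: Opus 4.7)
The plan is to handle the three assertions by first settling the base case $h=d$ through a direct analysis of the span map $\chi$, and then propagating irreducibility and unirationality to arbitrary $h>d$.

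For $h=d$ the minimal degree identity $d-1+n=N$ means a general $(d-1)$-plane $\Lambda\subset\P^{N}$ has complementary dimension to $X$, and by B\'ezout meets $X$ transversally in $\deg(X)=d$ smooth points in general linear position (by the uniform position principle for linear sections of nondegenerate irreducible varieties). Hence the span map
$$
\chi:VSP_{H}^{X}(d)\map\G(d-2,N-1),\qquad\{x_{1},\dots,x_{d}\}\longmapsto\langle x_{1},\dots,x_{d}\rangle
$$
lands in the rational Grassmannian of $(d-1)$-planes through $p$ and admits the rational inverse $\Lambda\mapsto\Lambda\cap X$, since the $d$ points of $\Lambda\cap X$ necessarily coincide with any $d$-point decomposition whose span is $\Lambda$. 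Thus $\chi$ is birational, which simultaneously yields the rationality of $VSP_{H}^{X}(d)$ and its irreducibility.

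For $h>d$ I would first deduce irreducibility by a fibration argument on
$$
V=\{(x_{1},\dots,x_{h})\in X^{h}\: |\: p\in\langle x_{1},\dots,x_{h}\rangle\},
$$
which surjects onto $VSP_{H}^{X}(p,h)^{o}$ via the symmetric quotient $X^{h}\to\Hilb_{h}(X)$. The projection of $V$ onto the first $h-1$ factors is dominant over the irreducible variety $X^{h-1}$, with generic fiber $X\cap\Lambda$, where $\Lambda=\langle p,x_{1},\dots,x_{h-1}\rangle$ is a general $(h-1)$-plane through $p$. Now $\Lambda\cap X$ has dimension $h-d$ and degree $d$ inside $\Lambda\cong\P^{h-1}$, so it is itself a nondegenerate variety of minimal degree and is irreducible by Bertini. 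Hence $V$ is irreducible, and so is $VSP_{H}^{X}(h)$.

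For unirationality when $h>d$ I would invoke Construction \ref{chain}, following the strategy announced in the remark after Definition \ref{vspg}. Its role is to extend a length-$\overline{h}$ decomposition of an auxiliary point to a length-$h$ decomposition of $p$ by iteratively adjoining a free point of $X$ and sliding the residual along the corresponding line, producing a dominant rational map from a product of $VSP_{H}^{X}(\overline{h})$ with copies of $X$ (and auxiliary rational parameters) to $VSP_{H}^{X}(h)$. Taking $\overline{h}=d$ as the base case, where $VSP_{H}^{X}(d)$ is already rational, and using that $X$ is itself rational (every variety of minimal degree is rational), the source of the chain map is unirational, hence so is $VSP_{H}^{X}(h)$. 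The main obstacle is the base case: verifying that $\chi^{-1}:\Lambda\mapsto\Lambda\cap X$ is well-defined on a dense open of $\G(d-2,N-1)$, i.e.\ that a general $(d-1)$-plane through $p$ cuts $X$ in $d$ distinct smooth points spanning $\Lambda$. This is a Bertini/uniform-position argument tailored to varieties of minimal degree, but it is the geometric heart of the theorem; once it is in place, the irreducibility for $h>d$ follows from the same Bertini principle applied inside the family $V$, and Construction \ref{chain} propagates unirationality to all $h\geq d$.
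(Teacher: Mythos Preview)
Your $h=d$ argument is the paper's, with the birational map written in the opposite direction. For $h>d$ you split into two steps where the paper uses one. Your irreducibility argument via the fibration $V\subset X^{h}\to X^{h-1}$ with generic fiber $X\cap\Lambda$ (a positive-dimensional general linear section, irreducible by Bertini) is correct and is a genuine alternative to the paper's route. The paper instead writes down a single incidence variety
\[
Y=\bigl\{((x_{1},\lambda_{1}),\dots,(x_{h-d},\lambda_{h-d}),\Lambda):p-\textstyle\sum_{i}\lambda_{i}x_{i}\in\Lambda\bigr\}\subseteq(X\times\P^{1})^{h-d}\times\G(d-1,N),
\]
observes that $Y$ fibers over the rational base $(X\times\P^{1})^{h-d}$ with Grassmannian fibers (hence $Y$ is rational), and maps $Y$ dominantly and generically finitely onto $VSP_{H}^{X}(h)$ by $((x_{i},\lambda_{i}),\Lambda)\mapsto\{x_{1},\dots,x_{h-d}\}\cup(\Lambda\cap X)$; a dimension count then gives irreducibility and unirationality simultaneously.

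Your unirationality step is in spirit this same construction, but the packaging is imprecise. The source cannot literally be ``a product of $VSP_{H}^{X}(\overline{h})$ with copies of $X$'': $VSP_{H}^{X}(\overline{h})=VSP_{H}^{X}(p,\overline{h})$ is tied to the fixed point $p$, whereas after adjoining the free points you must decompose the \emph{varying} residual $q=p-\sum\lambda_{i}x_{i}$. Construction~\ref{chain} as stated only embeds $VSP_{H}^{X}(h-1)\hookrightarrow VSP_{H}^{X}(h)$ for one fixed choice of $x_{1}$; it does not by itself produce a dominant family. What you actually need is the total space of the family of $(d-1)$-planes through the moving $q$ over the rational parameter space $(X\times\P^{1})^{h-d}$, and that is precisely the paper's $Y$. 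Once you phrase it that way your argument and the paper's coincide.
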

\begin{proof}
Let $p\in\P^{N}$ be a general point. Since 
$$\dim(X)+(d-1) = N-\codim(X)+d-1 = N$$ 
a general $(d-1)$-plane $\Lambda$ through $p$ intersects $X$ in $d$ distinct points $\Lambda\cap X = \{x_{1},...,x_{d}\}$. Clearly $p\in \Lambda = \langle x_{1},...,x_{d}\rangle$, and $\sec_{d}(X) = \mathbb{P}^{N}$. The $(d-1)$-planes in $\mathbb{P}^{N}$ passing through $p$ are parametrized by the Grassmannian $\G(N-d,N-1)$. Therefore we have a generically injective rational map
$$
\begin{array}{cccc}
\chi: & \G(N-d,N-1) & \dasharrow & VSP_{H}^{X}(d)\\
 & \Lambda & \longmapsto & \Lambda\cap X
\end{array}
$$
Now, it is enough to observe that 
$$\dim(\G(N-d,N-1)) = (N-d+1)(d-1) = n(d-1) = d(n+1)-N-1 = \dim(VSP_{H}^{X}(d))$$ 
to conclude that $VSP_{H}^{X}(d)$ is irreducible and rational.\\
For $h>d$ consider the incidence variety
    \[
  \begin{tikzpicture}[xscale=1.5,yscale=-1.5]
    \node (A0_1) at (1, 0) {$Y= \{((x_{1},\lambda_{1}),...,(x_{h-d},\lambda_{h-d}),\Lambda)\: | \: p-\sum_{i=1}^{h-d}\lambda_{i}x_{i}\in\Lambda\}\subseteq (X\times\mathbb{P}^{1})^{h-d}\times\G(d-1,N)$};
    \node (A1_0) at (0, 1) {$(X\times\mathbb{P}^{1})^{h-d}$};
    \node (A1_2) at (2, 1) {$\G(d-1,N)$};
    \path (A0_1) edge [->]node [auto] {$\scriptstyle{\psi}$} (A1_2);
    \path (A0_1) edge [->]node [auto,swap] {$\scriptstyle{\phi}$} (A1_0);
  \end{tikzpicture}
  \]
The morphism $\phi:Y\rightarrow (X\times\mathbb{P}^{1})^{h-d}$ is surjective and its fibers are isomorphic to the Grassmannian $\G(N-d,N-1)$. Then $Y$ is irreducible. Note that $(X\times\mathbb{P}^{1})^{h-d}$ is rational being $X$ of minimal degree and hence rational. Then the variety $Y$ is rational. Since $\chi$ is birational, for $((x_{1},\lambda_{1}),...,(x_{h-d},\lambda_{h-d}),\Lambda)\in Y$ general the intersection $\Lambda\cap X=\{\hat{x}_{1},...,\hat{x}_{d}\}$ determines a decomposition $p-\sum_{i=1}^{h-d} = \sum_{j=1}^{d}\hat{\lambda}_{j}\hat{x}_{j}$. The map 
$$
\begin{array}{cccc}
\alpha: & Y & \dasharrow & VSP_{H}^{X}(h)\\
 & ((x_{1},\lambda_{1}),...,(x_{h-d},\lambda_{h-d}),\Lambda) & \longmapsto & \{x_{1},...,x_{h-d},\hat{x}_{1},...,\hat{x}_{d}\}
\end{array}
$$
is a generically finite, rational map, of degree $\binom{h}{h-d}$. Now, it is enough to observe that
$$\dim(Y) = (n+1)(h-d)+(N-d+1)(d-1) = h(n+1)-N-1 = \dim(VSP_{H}^{X}(h))$$
to conclude that $\alpha$ is dominant. The variety $VSP_{H}^{X}(h)$ is finitely dominated by a rational variety, then it is irreducible and unirational. 
\end{proof}

\begin{Example}
Let $Q\subset\P^{3}$ be a smooth quadric. Since any line through a general point $p\in\P^{3}$ cuts on $Q$ a length two zero-dimensional subscheme the morphism 
$$\chi:\P^{2}\rightarrow VSP_{H}^{Q}(2)$$
is an injective regular morphism. By Proposition \ref{dim} $VSP_{H}^{Q}(2)$ is a smooth surface, so $\chi$ is an isomorphism and $VSP_{H}^{Q}(2)\cong\P^{2}$. More generally for a smooth quadric hypersurface $Q\subset\mathbb{P}^{N}$ we have a birational morphism
$$\P^{N-1}\rightarrow VSP_{H}^{Q}(2)$$
and $VSP_{H}^{Q}(2)$ is rational.
\end{Example}

\section{Low degree and rational connectedness}\label{ldrc}
In this section we will prove two results relating the rational connectedness of $VSP_{H}^{X}(h)$ to the fact that the base variety $X\subset\mathbb{P}^{N}$ has low degree. In the following we will always assume that $\sec_{h}(X) = \mathbb{P}^{N}$.

\begin{Convention}\label{conv}
When we refer to a general decomposition we always consider the irreducible component of $VSP_{H}^{X}(h)^{o}$ containing it. We still denote by $VSP_{H}^{X}(h)$ its compactification.
\end{Convention}

\subsubsection*{Fibrations approach}
Let us begin with the simple case of a non-degenerate hypersurface $X\subset\mathbb{P}^{N}$ of degree $d$. Let $\G(h-2,N-1)$ be the Grassmannian of $(h-1)$-planes in $\mathbb{P}^{N}$ passing through a general point $p\in\mathbb{P}^{N}$. We have a rational dominant map
$$
\begin{array}{cccc}
\chi: & VSP_{H}^{X}(h) & \dasharrow & \G(h-2,N-1)\\
 & \{x_1,...,x_h\} & \longmapsto & \left\langle x_1,...,x_h\right\rangle
\end{array}
$$
Let $H\in \G(h-2,N-1)$ be a general point, $X_{H} = X\cap H$ and  
$$X_{H}^{h}:=\underbrace{X_{h}\times...\times X_h}_{h\; \rm times}.$$
The fiber $\chi^{-1}(H)$ is birational to $X_{H}^{h}/S_{h}$.
Now, $X_{H}\subset H\cong\mathbb{P}^{h-1}$ is a hypersurface of degree $d$. Therefore $X_{H}$ is rationally connected if and only if $d\leq h-1$. In this case $X_{H}^{h}$ and $\chi^{-1}(H)$ are rationally connected as well. So $\chi$ is a rational dominant map on a rational variety with rationally connected general fiber. By Proposition \ref{rcfib} we conclude that if $d\leq h-1$ the $VSP_{H}^{X}(h)$ is rationally connected. In the following we generalize this argument.

\begin{Theorem}\label{rcdeg}
Let $X\subset\mathbb{P}^{N}$ be an irreducible variety. Assume $h > N-\dim(X)+1$ and that the general $(h-1)$-dimensional linear section of $X$ is rationally connected. Then the irreducible components of $VSP_{H}^{X}(h)$ are rationally connected.
\end{Theorem}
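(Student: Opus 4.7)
The plan is to run the strategy sketched in the paragraph before the statement: use the rational map
\[
\chi: VSP_{H}^{X}(h) \dasharrow \G(h-2,N-1),\qquad \{x_1,\dots,x_h\}\longmapsto \langle x_1,\dots,x_h\rangle,
\]
and check that it exhibits $VSP_{H}^{X}(h)$ as a fibration over a rational base with rationally connected general fiber, whence Proposition \ref{rcfib} finishes the job. First I would verify that $\chi$ is defined and dominant (on the component we care about). Setting $n=\dim(X)$, the hypothesis $h>N-n+1$ gives $\dim(X\cap H)=n+(h-1)-N>0$ for a general $(h-1)$-plane $H\in\G(h-2,N-1)$, and by a Bertini-type argument applied to the non-degenerate variety $X$ the general linear section $X_H:=X\cap H$ is positive-dimensional and spans $H$. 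Hence a general unordered $h$-tuple on $X_H$ is a point of $VSP_{H}^{X}(h)$ with image $H$ under $\chi$.

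Next I would identify the general fiber. By the preceding remark, for a general $H\in\G(h-2,N-1)$ a point of $\chi^{-1}(H)$ corresponds to an unordered collection of $h$ points on $X_H$ whose linear span is the whole of $H$; since $X_H$ spans $H$ and is positive-dimensional, a general $h$-tuple satisfies this condition, so $\chi^{-1}(H)$ is birational to the symmetric product $\operatorname{Sym}^h(X_H)=X_H^h/S_h$.

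Now comes the input from rational connectedness. By hypothesis $X_H$ is rationally connected. Products of rationally connected varieties are rationally connected (one chains fiberwise in each factor), so $X_H^h$ is rationally connected, and the dominant finite quotient $X_H^h\to\operatorname{Sym}^h(X_H)$ preserves this property since rational connectedness is a birational and image-invariant property for proper varieties. Thus the general fiber of $\chi$ is rationally connected. The base $\G(h-2,N-1)$ is rational, and Proposition \ref{rcfib} applied to $\chi$ restricted to any irreducible component of $VSP_{H}^{X}(h)$ dominating $\G(h-2,N-1)$ yields rational connectedness of that component.

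The main obstacle I expect is not the rational connectedness transfer but the bookkeeping on irreducible components and the Bertini step: one must be sure that the statement applies to every component of $VSP_{H}^{X}(h)$, which is handled by Convention \ref{conv} (we work with the component of $VSP_{H}^{X}(h)^o$ containing the general decomposition and its closure), and one must check that for $H$ generic in $\G(h-2,N-1)$ the intersection $X_H$ is irreducible, positive-dimensional and spans $H$, so that $\operatorname{Sym}^h(X_H)$ is actually the general fiber of $\chi$ and not some smaller locus. A secondary technical point is that the symmetric quotient, although singular, is still rationally connected because we only need rational connectedness, not smoothness, and the dominant morphism $X_H^h\to\operatorname{Sym}^h(X_H)$ is enough.
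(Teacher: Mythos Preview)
Your proposal is correct and follows essentially the same route as the paper: the rational map $\chi$ to $\G(h-2,N-1)$, the identification of the general fiber with $X_H^{h}/S_h$, and the application of Proposition~\ref{rcfib} are exactly what the paper does. Your write-up is in fact more explicit about why $\chi$ is dominant and why the symmetric quotient stays rationally connected, and you correctly flag Convention~\ref{conv} for the component issue.
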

\begin{proof}
As before we have a rational map
$$
\begin{array}{cccc}
\chi: & VSP_{H}^{X}(h) & \dasharrow & \G(h-2,N-1)\\
 & \{x_1,...,x_h\} & \longmapsto & \left\langle x_1,...,x_h\right\rangle
\end{array}
$$
Since $h>N-\dim(X)+1$, if $H\in\G(h-2,N-1)$ is an $(h-1)$-plane through $p$ and $X_{H} = X\cap H$, then $\dim(X_{H})\geq 1$ and $\chi$ is dominant.\\
Now, if $H\in\G(h-2,N-1)$ is general then $X_{H}\subset H\cong\mathbb{P}^{h-1}$ is a smooth, rationally connected variety. The general fiber $\chi^{-1}(H)$ is birational to $X_{H}^{h}/S_h$, so it is rationally connected as well. Finally, by Proposition \ref{rcfib} we conclude that $VSP_{H}^{X}(h)$ is rationally connected.
\end{proof}

An immediate consequence of Theorem \ref{rcdeg} is the following.

\begin{Proposition}
Let $X\subset\mathbb{P}^{N}$ be a smooth variety set theoretically defined by homogeneous polynomials $G_{i}$ of degree $d_{i}$, for $i = 1,..,m$, and let $l\geq 2$ be an integer. Assume $h > N-\dim(X)+1$. If
$$\sum_{i=1}^{m}d_{i}\leq \frac{(h-1)(l-1)+m}{l}$$
then the irreducible components of $VSP_{H}^{X}(h)$ are rationally connected.\\ 
Furthermore if $X\subset\mathbb{P}^{N}$ is a smooth complete intersection of $G_{1},...,G_{c}$ hypersurfaces of degree $d_{i}$ for $i = 1,..,c$ and
$$\sum_{i}^{c}d_{i}\leq h-1$$
then the irreducible components of $VSP_{H}^{X}(h)$ are rationally connected.
\end{Proposition}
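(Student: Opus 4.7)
The strategy is to reduce both statements to Theorem \ref{rcdeg}, for which it suffices to verify that a general $(h-1)$-dimensional linear section $X_{H} = X \cap H$ of $X$ is rationally connected. The hypothesis $h > N - \dim(X) + 1$ guarantees that such a section has positive dimension, and since $X$ is smooth, Bertini's theorem in characteristic zero ensures that $X_{H}$ is smooth for a general $H \in \G(h-2,N-1)$. Moreover, $X_{H}$ sits inside $H \cong \mathbb{P}^{h-1}$ and is cut out set-theoretically by the restrictions $G_{i}|_{H}$, which still have degrees $d_{i}$; in the complete intersection case the section remains a complete intersection of the same type.

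For the first statement, I apply Theorem \ref{as} to $X_{H} \subset \mathbb{P}^{h-1}$ (with $N$ replaced by $h-1$ in the stated inequality): the assumption
$$\sum_{i=1}^{m} d_{i} \leq \frac{(h-1)(l-1) + m}{l}$$
is exactly what the theorem requires, and combined with smoothness of $X_H$ it yields rational connectedness of $X_H$. Theorem \ref{rcdeg} then implies that the irreducible components of $VSP_{H}^{X}(h)$ are rationally connected.

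For the second statement, since $X_{H} \subset \mathbb{P}^{h-1}$ is a smooth complete intersection of type $(d_{1},\dots,d_{c})$, its anticanonical sheaf is $\mathcal{O}_{X_{H}}(h - \sum_{i=1}^{c} d_{i})$, which is ample precisely when $\sum_{i=1}^{c} d_{i} \leq h-1$. Thus $X_{H}$ is a smooth Fano variety, and by the theorem of Kollár--Miyaoka--Mori on rational connectedness of smooth Fano varieties, $X_{H}$ is rationally connected. Once again Theorem \ref{rcdeg} finishes the argument.

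The only delicate point is that the restriction of the defining equations of $X$ to a general linear subspace $H$ really cuts out the corresponding section smoothly and with the right numerical invariants; this is standard for smooth $X$ in characteristic zero via Bertini and a dimension count on the incidence variety, so no genuine obstacle arises.
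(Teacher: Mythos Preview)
Your proof is correct and follows essentially the same approach as the paper: both reduce to Theorem \ref{rcdeg} by restricting the defining equations to a general $H\cong\mathbb{P}^{h-1}$, invoking Theorem \ref{as} for the first part and the Fano criterion for smooth complete intersections for the second. Your version simply spells out a few points (Bertini, the anticanonical computation, Koll\'ar--Miyaoka--Mori) that the paper leaves implicit.
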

\begin{proof}
If $H\in\G(h-2,N-1)$ is general then $X_{H}\subset H\cong\mathbb{P}^{h-1}$ is a smooth variety defined by homogeneous polynomials $\widetilde{G}_{i}:= G_{i|H}$ of degree $d_{i}$, for $i = 1,..,m$. By Theorem \ref{as} under our numerical hypothesis the variety $X_{H}$ is rationally connected. To conclude it is enough to apply Theorem \ref{rcdeg}.\\
If $X$ is a smooth complete intersection then $X_{H}\subset\mathbb{P}^{h-1}$ is the smooth complete intersection of the hypersurfaces $\widetilde{G}_{i}:= G_{i|H}$ of degree $d_{i}$, for $i = 1,..,c$. The second statement follows from the fact that $X_{H}$ is rationally connected if and only if $\sum_{i}^{c}d_{i}\leq h-1$ and from Theorem \ref{rcdeg}.
\end{proof}

\begin{Example}
Let $X\subset\mathbb{P}^{9}$ be a complete intersection of two general hypersurfaces of degree two. Then the irreducible components of $VSP_{H}^{X}(h)$ are rationally connected for any $h\geq 5$.
\end{Example}

\begin{Remark}
Recall that we have a dominant rational map
$$
\begin{array}{cccc}
\tau: & VSP_H^X(h) & \dasharrow & VSP_G^X(h)\\
 & \{x_{1},...,x_{h}\} & \longmapsto & \langle x_{1},...,x_{h}\rangle
\end{array}
$$
Therefore Theorem \ref{rcdeg} holds for the compactification $VSP_G^X(h)$ as well.
\end{Remark}

By the argument used in the proof of Theorem \ref{rcdeg} we get the following result for the classical varieties of sums of powers $VSP(F,h)$ when $X = V_{d}^{n}\subset\mathbb{P}^{N(n,d)}$ is a Veronese variety.

\begin{Proposition}\label{ver}
Let $VSP(F,h)$ be the variety of sums of powers of a general homogeneous polynomial $F\in k[x_0,...,x_n]_{d}$. If
$$h\geq \frac{d(N+1)-n}{d}$$
then the irreducible components of $VSP(F,h)$ are rationally connected. 
\end{Proposition}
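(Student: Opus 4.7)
The plan is to adapt the fibration argument of Theorem \ref{rcdeg} to the Veronese setting $X = V_{d}^{n} \subset \mathbb{P}^{N}$ with $N = \binom{n+d}{d} - 1$. Consider the rational map
$$
\chi : VSP(F,h) \dasharrow \G(h-2,N-1), \qquad \{x_{1},\ldots,x_{h}\} \longmapsto \langle x_{1},\ldots,x_{h}\rangle,
$$
sending an unordered $h$-tuple to the $(h-1)$-plane it spans. Since the target is a rational Grassmannian, the strategy is to verify that $\chi$ is dominant and that its general fiber is rationally connected; then Proposition \ref{rcfib}, applied on each irreducible component as in Convention \ref{conv}, yields the conclusion.

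For dominance, the hypothesis
$$
h \geq \frac{d(N+1)-n}{d} = N+1-\frac{n}{d}
$$
forces $h > N-n+1$ (for $d \geq 2$, since $n/d<n$), so a general $(h-1)$-plane $H$ through $p$ meets $V_{d}^{n}$ in a subscheme $X_{H}$ of positive dimension and $\chi$ is dominant. Arguing exactly as in Theorem \ref{rcdeg}, the general fiber $\chi^{-1}(H)$ is birational to the symmetric product $X_{H}^{h}/S_{h}$, so it suffices to establish rational connectedness of $X_{H}$.

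The key observation, specific to the Veronese case, is that via the isomorphism $\nu_{d}^{n} : \mathbb{P}^{n} \to V_{d}^{n}$, each of the $N-h+1$ linear forms cutting out $H$ pulls back to a form of degree $d$ on $\mathbb{P}^{n}$. Hence $X_{H}$ is isomorphic to a complete intersection $Y \subset \mathbb{P}^{n}$ of $c := N-h+1$ hypersurfaces of degree $d$, smooth for generic $H$ by Bertini. Adjunction gives $\omega_{Y} = \mathcal{O}_{Y}(dc-n-1)$, and the Fano condition $dc \leq n$ is equivalent to $h \geq (d(N+1)-n)/d$, which is precisely our hypothesis. Smooth Fano complete intersections are rationally connected, so $X_{H}$ is rationally connected, and therefore so is $X_{H}^{h}/S_{h}$ as a finite quotient of a product of rationally connected varieties. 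Proposition \ref{rcfib} then concludes the argument.

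The main obstacle I anticipate is the component bookkeeping demanded by the statement: one must verify that $\chi$ restricts to a dominant map on each irreducible component of $VSP(F,h)$, so that the fibration argument applies componentwise. This is handled by noting that the general point of any component lies in $VSP(F,h)^{o}$ and corresponds to a genuine $h$-tuple of distinct points whose span is a generic $(h-1)$-plane through $p$; the irreducibility of $X_{H}$ (as a smooth Fano complete intersection) then makes the fiber structure uniform, and Proposition \ref{rcfib} applies componentwise.
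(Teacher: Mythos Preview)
Your proof is correct and follows essentially the same route as the paper's own argument: dominance of $\chi$ from $h>N-n+1$, identification of the general fiber with $X_{H}^{h}/S_{h}$ where $X_{H}\subset\mathbb{P}^{n}$ is a smooth complete intersection of $N-h+1$ degree-$d$ hypersurfaces, the adjunction computation $\omega_{X_{H}}\cong\mathcal{O}((N-h+1)d-n-1)$, and the appeal to Proposition~\ref{rcfib}. Your added remarks on the $d\geq 2$ restriction and on componentwise dominance via Convention~\ref{conv} are slightly more explicit than the paper but do not change the strategy.
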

\begin{proof}
The numerical hypothesis implies $h>N-n+1$. Then the rational map
$$
\begin{array}{cccc}
\chi: & VSP(F,h) & \dasharrow & \G(h-2,N-1)\\
 & \{L_1,...,L_h\} & \longmapsto & \left\langle L_1,...,L_h\right\rangle
\end{array}
$$
is dominant. In this case $X = V_{d}^{n}$ and the general fiber of $\chi^{-1}(H)$ is birational to $X_{H}^{h}/S_h$ where $X_{H}$ is a smooth complete intersection of $N-h+1$ hypersurfaces of degree $d$ in $\mathbb{P}^{n}$. Therefore
$$\omega_{X_{H}}\cong\mathcal{O}_{\mathbb{P}^{n}}((N-h+1)d-n-1).$$  
To conclude it is enough to observe that under our numerical hypothesis $X_{H}$ is Fano and therefore rationally connected and to apply Proposition \ref{rcfib}.
\end{proof}

\begin{Example}
If $d=n=3$, then $N=19$. The bound of Proposition \ref{ver} is $h\geq 19$. For instance when $h = 19$ we have the fibration
$$
\begin{array}{cccc}
\chi: & VSP(F,19) & \dasharrow & \G(17,18)\cong\mathbb{P}^{18}\\
 & \{L_1,...,L_{19}\} & \longmapsto & \left\langle L_1,...,L_{19}\right\rangle
\end{array}
$$
whose general fiber is a cubic surface. Note that this case is not covered by \cite[Theorem 4.1]{MMe}.
\end{Example}
In the following table we work out some numbers which make Proposition \ref{ver} work. We denote by $\overline{h}$ the smallest $h$ for which Proposition \ref{ver} holds. Clearly this result makes sense for $n\gg d$.
\begin{center}
{\small
\begin{tabular}{|c|c|c|c|}
\hline
$d$ & $n$ & $N$ & $\overline{h}$\\
\hline
3 & 100 & 176850 & 176818\\
3 & 150 & 585275 & 585226\\
4 & 200 & 70058750 & 70058701\\ 
\hline
\end{tabular}
}
\end{center}

\subsubsection*{Chains approach}
In the following our aim is to prove the rational connectedness of another class of generalized varieties of sums of powers by extending the approach of \cite[Section 4]{MMe} from the Veronese varieties to arbitrary unirational varieties. In this section we will use the notion of very general point. Therefore we assume that ground field $k$ is uncountable.

\begin{Construction}\label{chain}
Assume $VSP_{H}^{X}(h)$ to be non-empty, and let $\{x_{1},...,x_{h}\}\in VSP_{H}^{X}(h)$ be a general point. Then there exists a general point $p\in \mathbb{P}^{N}$ such that
$$p = \sum_{i=1}^{h}\lambda_{i}x_{i}.$$
The point $p-\lambda_{1}x_{1}$ is general as well, and we get a generically injective rational map
$$
\begin{array}{ccc}
VSP_{H}^{X}(h-1) & \dasharrow &  VSP_{H}^{X}(h)\\
 \{y_{1},...,y_{h-1}\} & \longmapsto & \{x_{1},y_{1},...,y_{h-1}\}
\end{array}
$$
This construction yields a stratification
$$VSP_{H}^{X}(h-r)\subset VSP_{H}^{X}(h-r+1)\subset ...\subset VSP_{H}^{X}(h-1)\subset VSP_{H}^{X}(h).$$
\end{Construction}

\begin{Proposition}\label{p2}
Let $X\subset\mathbb{P}^{N}$ be a non-degenerate variety of dimension $n$. If 
$$h\geq \frac{(N+1)}{n+1}+2$$
then two very general points of $VSP_{H}^{X}(h)$ are joined by a chain, of at most length three, of $VSP_{H}^{X}(h-1)$. If $V_{i}$ are the elements of this chain and $q\in V_{i}\cap V_{j}$ is a general point, then we can assume $q$ to be a smooth point in $V_{i},V_{j}$ and $VSP_{H}^{X}(h)$.
\end{Proposition}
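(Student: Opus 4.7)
The plan is to realize each chain member as an embedded copy of $VSP_H^X(h-1)$ provided by Construction \ref{chain}. For each pair $(x,\lambda)\in X\times \mathbb{A}^1$ the construction produces a subvariety $V_{x,\lambda}\subset VSP_H^X(p,h)$, the image of $VSP_H^X(p-\lambda x,h-1)$ under $\{y_1,\dots,y_{h-1}\}\mapsto\{x,y_1,\dots,y_{h-1}\}$. Two such subvarieties $V_{x,\lambda}$ and $V_{x',\lambda'}$ meet precisely when some length-$h$ decomposition of $p$ simultaneously contains $x$ and $x'$ with coefficients $\lambda$ and $\lambda'$ in the corresponding expression of $p$.

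The numerical hypothesis $h\geq (N+1)/(n+1)+2$ rewrites as $(h-2)(n+1)\geq N+1$, the expected-dimension count guaranteeing $\sec_{h-2}(X)=\mathbb{P}^N$ whenever $X$ is not $(h-2)$-defective (the generic case implicit in our setting). Now let $D=\{x_1,\dots,x_h\}$ and $E=\{y_1,\dots,y_h\}$ be two very general decompositions of $p$ with $p=\sum\lambda_i x_i=\sum\tau_j y_j$. Set $V_D:=V_{x_1,\lambda_1}\ni D$ and $V_E:=V_{y_1,\tau_1}\ni E$, and pick a generic auxiliary pair $(q,\mu)\in X\times \mathbb{A}^1$, giving $V_M:=V_{q,\mu}$. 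Since $\sec_{h-2}(X)=\mathbb{P}^N$, both $p-\lambda_1 x_1-\mu q$ and $p-\mu q-\tau_1 y_1$ admit $(h-2)$-decompositions $\{w_3,\dots,w_h\}$ and $\{w_3',\dots,w_h'\}$ in $X$, chosen (generically) disjoint from $\{x_1,q,y_1\}$. This yields length-$h$ decompositions $\{x_1,q,w_3,\dots,w_h\}\in V_D\cap V_M$ and $\{q,y_1,w_3',\dots,w_h'\}\in V_M\cap V_E$, so $V_D,V_M,V_E$ is the desired chain of length three.

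For the smoothness addendum, the intersection points constructed above depend on the finite list of generic choices $(q,\mu,\{w_i\},\{w_i'\})$; restricting to a sufficiently small open subset of the parameter space places each of them in the open dense smooth locus of every $V_i$ and of $VSP_H^X(h)$. The main obstacle is the passage from the numerical hypothesis to $\sec_{h-2}(X)=\mathbb{P}^N$, which relies on the implicit non-defectiveness of $X$ in the relevant range; once this is granted, the rest is a direct application of Construction \ref{chain} together with a standard general-position argument over $X\times \mathbb{A}^1$.
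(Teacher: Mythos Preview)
Your argument is essentially correct but follows a genuinely different route from the paper's. The paper also builds the chain out of three copies of $VSP_H^X(h-1)$, with endpoints $VSP_H^X(p-\lambda_1x_1,h-1)$ and $VSP_H^X(p-\gamma_1y_1,h-1)$ as you do, but it chooses the middle member differently and, more importantly, proves nonempty intersection differently. Instead of picking a free auxiliary $(q,\mu)$ and invoking $\sec_{h-2}(X)=\mathbb{P}^N$ to manufacture an explicit common decomposition, the paper takes a general $z\in VSP_H^X(p-\lambda_1x_1,h-1)$, sets the middle member to be $VSP_H^X(p-\alpha_2z_2,h-1)$ (so that $z$ automatically lies in the first intersection), passes to a resolution $Z\to VSP_H^X(h)$, observes that all these $VSP_H^X(h-1)$'s lie in a single component of $\Hilb(Z)$, and then uses the numerical hypothesis in the form $\dim VSP_H^X(h-1)\geq \codim_{VSP_H^X(h)}VSP_H^X(h-1)$ to force the second intersection to be nonempty by an algebraic-equivalence/intersection-number argument.

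The trade-off is this: your construction is more explicit and elementary, but it needs $\sec_{h-2}(X)=\mathbb{P}^N$, hence non-$(h-2)$-defectiveness, which is not part of the stated hypotheses (you correctly flag this). The paper's argument only needs the standing assumption $\sec_{h-1}(X)=\mathbb{P}^N$ (so that $VSP_H^X(h-1)$ is nonempty of the expected dimension) and then leans on deformation equivalence in $\Hilb(Z)$ rather than on an extra secant condition. So your approach is simpler when the defectivity assumption is available, while the paper's works in slightly greater generality at the cost of invoking intersection theory on the resolution.
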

\begin{proof}
Let $x=\{x_{i}\},y = \{y_{i}\}\in VSP_{H}^{X}(h)$ be two very general points, and write
$$p = \sum_{i=1}^{h}\lambda_{i}x_{i} = \sum_{i=1}^{h}\gamma_{i}y_{i}.$$
Let $z\in VSP_{H}^{X}(p-\lambda_{1}x_{1},h-1)$ be a general point associated to the decomposition
$$p-\lambda_{1}x_{1} = \sum_{i=2}^{h}\alpha_{i}z_{i}.$$
Let $\nu:Z\rightarrow VSP_{H}^{X}(h)$ be a resolution of singularities. Since $x$ and $y$ are two very general points we can assume that 
\begin{itemize}
\item[-] $\nu^{-1}(VSP_{H}^{X}(p-\lambda_{1}x_{1},h-1))$ and $\nu^{-1}(VSP_{H}^{X}(p_{j}-\gamma_{1}y_{1},h-1))$ belong to the same irreducible component of $\Hilb(Z)$.
\item[-] $\nu$ is an isomorphism in a neighborhood of $q$.
\end{itemize}
Since $z\in VSP_{H}^{X}(h)$ is associated to $p = \lambda_{1}x_{1}+\sum_{i=2}^{h}\alpha_{i}z_{i}$ we have
$$z\in VSP_{H}^{X}(p-\lambda_{1}x_{1},h-1)\cap VSP_{H}^{X}(p-\alpha_{2}z_{2},h-1).$$
Under our numerical hypothesis we have 
$$\dim(VSP_{H}^{X}(p-\alpha_{2}z_{2},h-1))\geq\codim_{VSP_{H}^{X}(h)}(VSP_{H}^{X}(p-\alpha_{2}z_{2},h-1)),$$
and we conclude that 
$$VSP_{H}^{X}(p-\alpha_{2}z_{2},h-1)\cap VSP_{H}^{X}(p-\gamma_{1}y_{1},h-1)\neq\emptyset.$$
Furthermore the general point of this intersection is a smooth point of $VSP_{H}^{X}(p-\alpha_{2}z_{2},h-1)$, $VSP_{H}^{X}(p-\gamma_{1}y_{1},h-1)$ and $VSP_{H}^{X}(h)$. Finally we can join two general points of $VSP_{H}^{X}(h)$ by a chain of length at most three of $VSP_{H}^{X}(h-1)$.
\end{proof}

In what follows our aim is to generalize \cite[Theorem 4.1]{MMe} taking an arbitrary unirational variety $X\subset\mathbb{P}^{N}$ instead of the Veronese variety $V_{d}^{n}\subset\mathbb{P}^{N(n,d)}$. The first step of our argument is to prove that a unirational variety can be covered by rational subvarieties in a suitable sense.

\begin{Proposition}\label{p1}  
Let $X$ be a unirational variety. For any triple of integers
$(a,b,c)$, with $0<c<n$, there is a rationally connected variety $V^n_{a,b,c}\subset\Hilb(X)$ with the following properties:
\begin{itemize}
\item[-] a general point in $V^n_{a,b,c}$ represents  a rational subvariety of $X$ of codimension $c$;
\item[-] for a general $Z\subset X$ reduced zero dimensional scheme of length $l\leq b$, there is a rationally connected subvariety $V_{Z,c}\subset V^n_{a,b,c}$, of dimension at least $a$, whose general element $[Y]\in V_{Z,c}$ represents a rational  subvariety of $X$ of codimension $c$ containing $Z$.
\end{itemize}
\end{Proposition}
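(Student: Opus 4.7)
The plan is to transfer the stratification of projective space by linear subspaces over to $X$ via unirationality. Since $X$ is unirational, fix a generically finite dominant rational map $\phi_{0}:\mathbb{P}^{n}\dasharrow X$. For any $m\geq n$, composing with a general linear projection $\mathbb{P}^{m}\dasharrow\mathbb{P}^{n}$ yields a dominant rational map $\phi:\mathbb{P}^{m}\dasharrow X$ with $(m-n)$-dimensional general fiber; we will choose $m$ at the end depending on $(a,b,c)$. Resolve indeterminacy to a birational $\pi:Y\to\mathbb{P}^{m}$ with a morphism $\widetilde{\phi}:Y\to X$ factoring $\phi$.

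For a general $(m-c)$-plane $L\subset\mathbb{P}^{m}$ the image $\widetilde{\phi}(\pi^{-1}(L))\subset X$ has dimension $(m-c)-(m-n)=n-c$, i.e.\ codimension $c$, and is dominated by the rational variety $L\cong\mathbb{P}^{m-c}$, hence is unirational; this is the sense in which a general element of $V^{n}_{a,b,c}$ plays the role of the "rational subvariety" used in the Veronese setting of \cite[Theorem 4.1]{MMe}. I would set $V^{n}_{a,b,c}\subseteq\Hilb(X)$ to be the closure of the image of the rational map
$$\Grass(m-c+1,m+1)\dasharrow\Hilb(X),\qquad L\longmapsto\bigl[\widetilde{\phi}(\pi^{-1}(L))\bigr].$$
Since the source Grassmannian is rational, $V^{n}_{a,b,c}$ is unirational and in particular rationally connected, proving the first bullet.

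For the second bullet, let $Z\subset X$ be a general reduced zero-dimensional scheme of length $l\leq b$. Using the generic finiteness of $\phi_{0}$, we can lift each point $z_{i}\in Z$ to a preimage $\widetilde{z}_{i}\in\mathbb{P}^{m}$, producing $\widetilde{Z}\subset\mathbb{P}^{m}$ of length $l$; for $m\gg l$ and $Z$ general this lift lies in linearly general position. The sub-Grassmannian of $(m-c)$-planes containing $\widetilde{Z}$ is isomorphic to $\Grass(m-c+1-l,m+1-l)$, which is rational of dimension $c(m-c+1-l)$. Its image in $V^{n}_{a,b,c}$ defines $V_{Z,c}$, and each general element is unirational of codimension $c$ and contains $Z$ because $\widetilde{\phi}(\widetilde{z}_{i})=z_{i}$. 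Since a general $L$ is recovered, up to finite ambiguity coming from the Galois action of $\phi$, from the scheme-theoretic image $\widetilde{\phi}(\pi^{-1}(L))$, the Grassmannian-to-Hilbert map is generically finite on $\Omega_{\widetilde{Z}}$; hence $\dim V_{Z,c}=c(m-c+1-l)$ up to a fixed constant. Choosing $m$ so that $c(m-c+1-b)\geq a$ forces $\dim V_{Z,c}\geq a$, as required.

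The main obstacle is making the assignment $L\mapsto[\widetilde{\phi}(\pi^{-1}(L))]$ into a genuine morphism from a dense open of the Grassmannian into $\Hilb(X)$. This amounts to exhibiting a common Hilbert polynomial for the family $\{\widetilde{\phi}(\pi^{-1}(L))\}_{L}$, which follows from generic flatness after shrinking the base, but the verification is the technical core of the argument; one also has to rule out that $\phi(L_{1})=\phi(L_{2})$ for $L_{1}\neq L_{2}$ on a positive-dimensional locus of pairs, which would destroy the dimension count in $V_{Z,c}$. A secondary technical point is to ensure, by taking $m$ sufficiently large and $Z$ very general, that the lifts $\widetilde{z}_{i}$ are indeed in linearly general position in $\mathbb{P}^{m}$, so that the codimension of containment inside the Grassmannian is exactly $lc$ and not smaller; since $m$ can be chosen freely, this is ultimately a book-keeping matter rather than a genuine difficulty.
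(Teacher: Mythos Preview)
Your approach has a fatal dimension error. The formula $\dim\widetilde{\phi}(\pi^{-1}(L))=(m-c)-(m-n)=n-c$ presumes that a general $(m-c)$-plane $L$ meets the general fibre of $\phi$ in the expected dimension, but the fibres of $\phi$ are not in general position: the map factors through the linear projection $p:\mathbb{P}^{m}\dasharrow\mathbb{P}^{n}$, whose fibres are $(m-n)$-planes all containing the \emph{fixed} centre $\Lambda\cong\mathbb{P}^{m-n-1}$. For a general $(m-c)$-plane $L$ with $m\geq n+c$ one has $p(L)=\mathbb{P}^{n}$, hence $\phi(L)=X$, not a codimension-$c$ subvariety. (Concretely: take $n=2$, $c=1$, $m=3$; a general plane in $\mathbb{P}^{3}$ projects from a point isomorphically onto all of $\mathbb{P}^{2}$, so its image in $X$ is $X$ itself, not a curve.) The only value of $m$ for which a general $(m-c)$-plane maps to a codimension-$c$ subvariety of $X$ is $m=n$, and then the Grassmannian of $(n-c)$-planes through $l\leq b$ general points has dimension $c(n-c+1-l)$, which is bounded independently of $a$ and is empty as soon as $l>n-c+1$. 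So your final step ``choosing $m$ so that $c(m-c+1-b)\geq a$'' is unavailable, and the construction cannot meet the requirement $\dim V_{Z,c}\geq a$ for arbitrary $(a,b)$.

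There is also a secondary issue: the proposition asks for \emph{rational} subvarieties, and this matters downstream. Lemma~\ref{l1} realises a general $Y\in V^{n}_{a,b,c}$ as a birational projection of a Veronese variety via Remark~\ref{projver}, and that step needs $Y$ rational, not merely unirational as your images $\widetilde{\phi}(\pi^{-1}(L))$ would be.

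The paper fixes both problems by abandoning linear subspaces in favour of high-degree rational hypersurfaces in $\mathbb{P}^{n}$ of the form $x_{n}A_{d-1}(x_{0},\dots,x_{n-1})+B_{d}(x_{0},\dots,x_{n-1})=0$, each having a single point of multiplicity $d-1$ (hence rational, with an explicit birational projection to $\mathbb{P}^{n-1}$). Taking $d>ab$ makes this linear system large enough that the subsystem through any $l\leq b$ prescribed points still has dimension at least $a$; and since the hypersurfaces are general in a big family, $\phi$ restricts birationally to them, so their images in $X$ are genuinely rational. Higher codimensions are then produced inductively by pulling back elements of $W^{n-1}_{a,b,c-1}$ along the birational projection from the singular point.
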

\begin{proof}
Since $X$ is unirational there is a generically finite, dominant map $\phi:\mathbb{P}^{n}\dasharrow X$. For any Hilbert polynomial $P\in\mathbb{Q}[z]$ the map $\phi$ induces a generically finite rational map
$$
\begin{array}{cccc}
\chi: & \Hilb_{P}(\P^{n}) & \dasharrow & \Hilb_{Q}(X)\\
 & Z & \longmapsto & \phi(Z)
\end{array}
$$
We prove the statement  by induction on $c$. Assume $c=1$, and consider an equation of the form 
$$Y=(x_nA(x_0,\ldots,x_{n-1})_{d-1}+B(x_0,\ldots,x_{n-1})_d=0),$$
then, for $A$ and $B$ general, $Y\subset\mathbb{P}^{n}$ is a rational hypersurface of degree $d$ with a unique singular point of multiplicity $d-1$ at the point $[0,\ldots,0,1]$. Take $A$ and $B$ general. Let $\overline{Y}:=\overline{\phi(Y)}$ be the closure of the image of $Y$ in $X$. If $\overline{y}\in\overline{Y}$ is a general point the fiber $\phi^{-1}(\overline{y})$ intersects $Y$ in a point, that is $\phi_{|Y}:Y\rightarrow\overline{Y}$ is birational.\\ 
Fix $d>ab$ and let $W^n_{a,b,1}\subset \P(k[x_0,\ldots,x_n]_d)$ be the linear span of these
hypersurfaces. We take $V^n_{a,b,1}:= \chi(W^n_{a,b,1})$. 
Let $Z = \{x_{1},...,x_{l}\}\subset X$ be a zero dimensional subscheme of length $l\leq b$, and take $p_{i} \in \phi^{-1}(x_{i})$ for $i = 1,...,l$.\\ 
For any triple $(a,b,1)$ consider $W_{Z,1}\subset W^n_{a,b,1}$ as the sublinear system of hypersurfaces containing $\{p_{1},...,p_{l}\}$. Now take $V_{Z,1}:= \chi(W_{Z,1})$. Then on a general point $[Y]\in W_{Z,1}$ the map $\phi$ restricts to a birational map and a general point of $V_{Z,1}$ parametrizes a rational subvariety of codimension $1$ in $X$ containing $Z$.\\
Assume, by induction, that
$W^n_{a,b,i-1}\subset\Hilb(\P^{n-1})$ exist for any $n$ and $b$.
Define, for $i\geq 2$,
$$\widetilde{W}^n_{a,b,i}:= W^n_{a,b,1}\times
W^{n-1}_{a,b,i-1}\subset\Hilb(\P^n)\times\Hilb(\P^{n-1}).$$ 
Let $[Y]$ be a general point in
$W^n_{a,b,1}$. By construction $Y$ has a point of
multiplicity $d-1$ at the point
$[0,\ldots,0,1]\in \P^n$. Then the
projection
$\pi_{[0,\ldots,0,1]}:\P^n\rat\P^{n-1}$
restricts to a birational map $\f_Y:Y\rat
\P^{n-1}$. Hence we may
associate the general element
$([Y],[S])\in\{[Y]\}\times
W^{n-1}_{a,b,i-1}$ to the
codimension $i$ subvariety
$\f_Y^{-1}(S)\subset\P^n$.
By \cite[Proposition
I.6.6.1]{Ko} this yields a rational map 
$$
\begin{array}{cccc}
\alpha: & \widetilde{W}^n_{a,b,i} & \dasharrow & \Hilb(\P^n)\\
 & (Y,S) & \longmapsto & \f_Y^{-1}(S)
\end{array}
$$
Let $W^n_{a,b,i}:=\overline{\alpha(\widetilde{W}^n_{a,b,i})}\subset\Hilb(\P^n).$
For any $Z$ we may define 
$$\widetilde{W}_{Z,i}:=W_{Z,1}\times
W_{\pi_{[1,0,\ldots,0]}(Z),i-1},$$
and as above $W_{Z,i}=\overline{\alpha(\widetilde{W}_{Z,i})}$.\\
By construction a general point of $W^n_{a,b,c}$ is the inverse image of a rational subvariety of codimension $c-1$ in $\mathbb{P}^{n-1}$ via the projection from the singular point of a general rational hypersurface in $W^n_{a,b,1}$. Then on the general subvariety parametrized by $W^n_{a,b,c}$ and $\widetilde{W}_{Z,c}$ the map $\phi$ restricts to a birational map. We take $V^{n}_{a,b,c}:=\chi(W^n_{a,b,c})$ and $V_{Z,c}:=\chi(\widetilde{W}_{Z,c})$. The varieties $V^{n}_{a,b,c}$ and $V_{Z,c}$ are dominated by rationally connected varieties, so they are rationally connected as well. 
\end{proof}

\begin{Remark}\label{projver}
Let $X\subset\mathbb{P}^{N}$ be a rational, non-degenerate variety of dimension $n$, and let $\phi:\mathbb{P}^{n}\dasharrow X$ be a birational map. The linear system $\mathcal{H} = \phi^{*}\mathcal{O}_{\mathbb{P}^{N}}(1)$ is a subsystem of $\mathcal{O}_{\mathbb{P}^{n}}(d)$ for some integer $d$.  
We can embed $\mathbb{P}^{n}$ via the Veronese embedding $\nu_{d}^{n}$ in $\mathbb{P}^{N(n,d)}$. The variety $X$ is a birational projection 
   \[
  \begin{tikzpicture}[xscale=2.5,yscale=-1.2]
    \node (A0_0) at (0, 0) {$\mathbb{P}^{n}$};
    \node (A0_1) at (1, 0) {$V_{d}^{n}\subset\mathbb{P}^{N(n,d)}$};
    \node (A1_1) at (1, 1) {$X\subset\mathbb{P}^{N}$};
    \path (A0_0) edge [->]node [auto] {$\scriptstyle{\nu_{d}^{n}}$} (A0_1);
    \path (A0_1) edge [->,dashed]node [auto] {$\scriptstyle{\pi}$} (A1_1);
    \path (A0_0) edge [->,dashed]node [auto] {$\scriptstyle{}$} (A1_1);
  \end{tikzpicture}
  \]
of $V_{d}^{n}$. This means that any rational variety can be seen as a birational projection of a suitable Veronese variety.
\end{Remark}

Let $X\subset\mathbb{P}^{N}$ be a unirational variety of dimension $n$. In the following, by Convention \ref{conv} we assume that $VSP_{G}^{X}(h)$ is irreducible. Let us assume there exists an integer $k$ such that
\begin{itemize}
\item[-] $0< k < n$,
\item[-] $\overline{h} = \frac{N}{k+1}$ is an integer,
\item[-] $\overline{h}+n < N+1$.
\end{itemize}
\begin{Remark}
The last condition is required in Definition \ref{vspg} because we are working with the compactification $VSP_{G}^{X}(h)$. Note that the inequality $\overline{h}+n < N+1$ means that $X\subset\mathbb{P}^{N}$ has big codimension. On the other hand low codimension varieties are studied using the fibrations approach of Theorem \ref{rcdeg}. Finally, note that when $X = V_{d}^{n}$ is a Veronese variety the inequality $\overline{h}+n < N+1$ is automatically satisfied. 
\end{Remark}
Let $H_{x}=\left\langle x_1,...,x_{\overline{h}}\right\rangle$ and $H_{y}=\left\langle y_1,...,y_{\overline{h}}\right\rangle$ be two general points of $VSP_{G}^{X}(h)$. In the notation of Proposition \ref{p1} let us consider $V^{n}_{a,2\overline{h},n-k}\subset\Hilb(X)$ for $a\gg 0$ and let $Y\in V^{n}_{a,2\overline{h},n-k}$ be a general point. Recall that:
\begin{itemize}
\item[-] $Y$ is a rational subvariety of $X$ of codimension $n-k$,
\item[-] if $Z = \{x_1,...,x_{\overline{h}},y_1,...,y_{\overline{h}}\}$ then there exists a rationally connected subvariety $V_{Z,n-k}\subset V^{n}_{a,2\overline{h},n-k}$ of dimension at least $a$ whose general point represents a rational subvariety of $X$ of codimension $n-k$ containing $Z$.
\end{itemize}

\begin{Lemma}\label{l1}
The $\overline{h}$-secant variety $\sec_{\overline{h}}(Y)\subset\mathbb{P}^{N}$ is a hypersurface. Furthermore through a general point of $\sec_{\overline{h}}(Y)$ there is a unique $(\overline{h}-1)$-plane $\overline{h}$-secant to $Y$ and $\Sing(\sec_{\overline{h}}(Y))$ has codimension one in $\sec_{\overline{h}}(Y)$.
\end{Lemma}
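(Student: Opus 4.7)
The plan is to prove the three assertions in turn, reducing as much as possible to the well-understood theory of secant varieties of Veronese varieties.

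For the hypersurface claim, I would first observe that the abstract secant variety $\Sec_{\overline{h}}(Y)$ has dimension $\overline{h}k+\overline{h}-1=\overline{h}(k+1)-1=N-1$, using the hypothesis $\overline{h}(k+1)=N$; hence $\dim\sec_{\overline{h}}(Y)\leq N-1$ and what is needed is equality. By Remark \ref{projver}, $Y$ is a birational linear projection $\pi:V_d^k\dashrightarrow Y$ of a Veronese variety $V_d^k\subset\mathbb{P}^{N(k,d)}$, and by passing to a suitable sublinear system of a multiple of the relevant linear series I may assume $d$ is as large as required. For $d$ sufficiently large the Alexander--Hirschowitz theorem guarantees that $V_d^k$ is not $\overline{h}$-defective, so $\sec_{\overline{h}}(V_d^k)$ has the expected dimension $N-1$. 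Since the secant construction commutes with linear projections we have $\sec_{\overline{h}}(Y)=\pi(\sec_{\overline{h}}(V_d^k))$, and for a generic center the projection preserves the dimension, so $\sec_{\overline{h}}(Y)$ is a hypersurface in $\mathbb{P}^N$.

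For the uniqueness assertion, the equality $\dim\Sec_{\overline{h}}(Y)=\dim\sec_{\overline{h}}(Y)$ already yields that $\pi_h$ is generically finite. To upgrade this to the claim that a unique $(\overline{h}-1)$-plane passes through a general point, I would again lift to the Veronese: at subgeneric rank $\overline{h}$, where $\sec_{\overline{h}}(V_d^k)$ is a proper hypersurface, the classical identifiability results for decompositions of a general form of degree $d$ in $k+1$ variables ensure that a general point of $\sec_{\overline{h}}(V_d^k)$ admits a unique secant plane, and this uniqueness descends to $Y$ via the birational map $\pi$.

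For the singular locus, I would apply Terracini's lemma: the projectivized tangent space of $\sec_{\overline{h}}(Y)$ at a smooth point $p=\sum \lambda_i y_i$ is $\langle T_{y_1}Y,\ldots,T_{y_{\overline{h}}}Y\rangle$, which generically fills out an $(N-1)$-plane. The singular locus of $\sec_{\overline{h}}(Y)$ therefore contains the image under $\pi_h$ of the Terracini degeneration locus in $Y^{\overline{h}}$, namely the configurations where these $\overline{h}$ tangent spaces fail to span an $(N-1)$-plane, which is a codimension-one condition. The main obstacle I anticipate is verifying that this degeneration locus is both nonempty and of codimension exactly one in $\sec_{\overline{h}}(Y)$, rather than of smaller codimension from more severe defective spans; I would handle this by specializing to the Veronese case, where the Terracini analysis of $V_d^k$ is explicit, and transferring the conclusion back along the birational projection $\pi$.
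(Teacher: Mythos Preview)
Your approach to the first two claims aligns with the paper's: realize $Y$ as a birational linear projection $\pi:V_d^k\dasharrow Y$ of a Veronese variety with $d\gg 0$, establish the needed non-defectivity and identifiability upstairs, and push down. One point you pass over is that birationality of $\pi:V_d^k\dasharrow Y$ does not automatically give birationality of $\pi:\sec_{\overline{h}}(V_d^k)\dasharrow\sec_{\overline{h}}(Y)$; the paper secures this via the non-\emph{weak}-defectivity of $V_d^k$ (using \cite[Corollary 4.5]{Me2} and \cite[Theorem 1.4]{CC}) rather than mere non-defectivity from Alexander--Hirschowitz.

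The singular-locus argument, however, has a genuine gap. Your plan to detect $\Sing(\sec_{\overline{h}}(Y))$ via the Terracini degeneration locus and then ``transfer from the Veronese'' cannot work: for $V_d^k\subset\mathbb{P}^{N(k,d)}$ with $d\gg 0$, the singular locus of $\sec_{\overline{h}}(V_d^k)$ is $\sec_{\overline{h}-1}(V_d^k)$, which has codimension $k+1$ in $\sec_{\overline{h}}(V_d^k)$, not codimension one. The codimension-one singular locus of $\sec_{\overline{h}}(Y)$ is not inherited from upstairs; it is \emph{created by the projection} itself. The paper's argument exploits exactly this: factor $\pi=\pi_2\circ\pi_1$ so that the last step $\pi_2:\mathbb{P}^{N+1}\dasharrow\mathbb{P}^N$ is projection from a general point. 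Writing $Z:=\pi_1(\sec_{\overline{h}}(V_d^k))\subset\mathbb{P}^{N+1}$, one has $\dim Z=N-1$ and $\sec_2(Z)=\mathbb{P}^{N+1}$, so the double-point locus introduced by $\pi_2$ has dimension $\dim VSP_H^{Z}(2)=2N-(N+1)-1=N-2$, which is codimension one in $\sec_{\overline{h}}(Y)$. The singularities you are looking for are double points of a generic projection, not drops in the Terracini span.
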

\begin{proof}
Since $Y$ is rational, by Remark \ref{projver}, we can see $Y$ as a birational projection $\pi:V_{d}^{k}\dasharrow Y$ of a Veronese variety $V_{d}^{k}\subset\mathbb{P}^{N(k,d)}$ for $d\gg \overline{h}$. We denote by $\mathcal{B}\subset V_{d}^{k}$ the indeterminacy locus of $\pi$.\\
Let $z\in\sec_{\overline{h}}(V_{d}^{k})$ be a general point. Assume $z\in \left\langle x_{1},...,x_{\overline{h}}\right\rangle\cap\left\langle y_{1},...,y_{\overline{h}}\right\rangle$. By Terracini's lemma \cite[Theorem 1.1]{CC} we have
$$\mathbb{T}_{z}\sec_{\overline{h}}(V_{d}^{k}) = \left\langle \mathbb{T}_{x_{1}}V_{d}^{k},...,\mathbb{T}_{x_{\overline{h}}}V_{d}^{k}\right\rangle = \left\langle \mathbb{T}_{y_{1}}V_{d}^{k},...,\mathbb{T}_{y_{\overline{h}}}V_{d}^{k}\right\rangle.$$
Then the general hyperplane section singular at $\{x_{1},...,x_{\overline{h}}\}$ is singular at $\{y_{1},...,y_{\overline{h}}\}$.  Since $\codim (\sec_{\overline{h}}(V_{d}^{k}))\geq k+1$ by \cite[Corollary 4.5]{Me2} $V_{d}^{k}$ is not $\overline{h}$-weakly defective. Therefore by \cite[Theorem 1.4]{CC} the general hyperplane section singular at $\{x_{1},...,x_{\overline{h}}\}$ is singular only at $\{x_{1},...,x_{\overline{h}}\}$. So $\{x_{1},...,x_{\overline{h}}\} = \{y_{1},...,y_{\overline{h}}\}$ and through a general point of $\sec_{\overline{h}}(V_{d}^{k})$ there is a unique $(\overline{h}-1)$-plane $\overline{h}$-secant to $V_{d}^{k}$. We conclude that
$$\dim(\sec_{\overline{h}}(V_{d}^{k})) = k\overline{h}+\overline{h}-1 = N-1.$$
Let us assume that $\pi:\sec_{\overline{h}}(V_{d}^{k})\dasharrow\sec_{\overline{h}}(Y)$ is not birational. The same argument of the first part of the proof shows that $V_{d}^{k}$ is $\overline{h}$-weakly defective. This contradicts \cite[Corollary 4.5]{Me2}. Therefore $\sec_{\overline{h}}(Y)$ is a birational projection of $\sec_{\overline{h}}(V_{d}^{k})$. So through a general point of $\sec_{\overline{h}}(Y)$ there is a unique $(\overline{h}-1)$-plane $\overline{h}$-secant to $Y$ and $\dim(\sec_{\overline{h}}(Y)) = N-1$.\\
Now, for a general $Y\in V^{n}_{a,2\overline{h},n-k}$ we have $\left\langle\sec_{\overline{h}}(Y)\cap\left\langle\mathcal{B}\right\rangle\right\rangle\subsetneqq\left\langle\mathcal{B}\right\rangle$. Then we can factorize $\pi$ as a composition of projections 
\[
  \begin{tikzpicture}[xscale=2.5,yscale=-1.2]
    \node (A0_0) at (0, 0) {$\mathbb{P}^{N(k,d)}$};
    \node (A1_0) at (0, 1) {$\mathbb{P}^{N+1}$};
    \node (A1_1) at (1, 1) {$\mathbb{P}^{N}$};
    \path (A1_0) edge [->,dashed] node [auto] {$\scriptstyle{\pi_{2}}$} (A1_1);
    \path (A0_0) edge [->,dashed,swap] node [auto] {$\scriptstyle{\pi_{1}}$} (A1_0);
    \path (A0_0) edge [->,dashed] node [auto] {$\scriptstyle{\pi}$} (A1_1);
  \end{tikzpicture}
  \]
where $\pi_{2}$ is a projection from a general point of $\mathbb{P}^{N+1}$. We know that $\sec_{2}(\pi_{1}(\sec_{\overline{h}}(V_{d}^{k}))) = \mathbb{P}^{N+1}$. Therefore the dimension of the singular locus of $\sec_{\overline{h}}(Y)$ is the dimension of the space of secant lines to $\pi_{1}(\sec_{\overline{h}}(V_{d}^{k}))$ passing through $p$. That is
$$\dim(VSP^{\pi_{1}(\sec_{\overline{h}}(V_{d}^{k}))}_{H}(2))=2(N-1+1)-(N+1)-1 = N-2.$$
\end{proof}

Let $m$ be the degree of the hypersurface $\sec_{\overline{h}}(Y)\subset\mathbb{P}^{N}$. 

\begin{Lemma}
There exists a generically injective rational map
$$
\begin{array}{cccc}
\alpha: & V^{n}_{a,2\overline{h},n-k} & \dasharrow & \mathbb{P}(k[z_{0},...,z_{N}]_{m})\\
 & Y & \longmapsto & \sec_{\overline{h}}(Y)
\end{array}
$$
\end{Lemma}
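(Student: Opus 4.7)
The plan has two parts: show that $\alpha$ is a well-defined rational map, and show that it is generically injective.

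For well-definedness, I would argue as follows. By Lemma \ref{l1}, for a general point $Y\in V^{n}_{a,2\overline{h},n-k}$ the variety $\sec_{\overline{h}}(Y)\subset\mathbb{P}^{N}$ is an irreducible hypersurface, hence the zero locus of a non-zero homogeneous polynomial. Since $V^{n}_{a,2\overline{h},n-k}$ is irreducible (indeed rationally connected by Proposition \ref{p1}) and the formation of secant varieties is algebraic, the hypersurfaces $\sec_{\overline{h}}(Y)$ fit into an algebraic family of divisors of $\mathbb{P}^{N}$ over an open dense subset of $V^{n}_{a,2\overline{h},n-k}$; in particular its degree $m$ is constant on this open set, and the defining polynomial is well-defined up to scalar, giving the rational map $\alpha$ into $\mathbb{P}(k[z_{0},\dots,z_{N}]_{m})$.

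For generic injectivity, the strategy is to recover $Y$ from the single hypersurface $\sec_{\overline{h}}(Y)$. The first step is to recover the family $\mathcal{F}\subset\G(\overline{h}-1,N)$ of $(\overline{h}-1)$-planes $\overline{h}$-secant to $Y$ via the Gauss map $\gamma:\sec_{\overline{h}}(Y)\dasharrow(\mathbb{P}^{N})^{*}$. Indeed, if $p$ is a general point lying on the unique $\overline{h}$-secant plane $\langle y_{1},\dots,y_{\overline{h}}\rangle$ guaranteed by Lemma \ref{l1}, Terracini's lemma gives
$$\mathbb{T}_{p}\sec_{\overline{h}}(Y)=\langle\mathbb{T}_{y_{1}}Y,\dots,\mathbb{T}_{y_{\overline{h}}}Y\rangle,$$
which is a hyperplane since $\sec_{\overline{h}}(Y)$ is a hypersurface. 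Using the non-weak-defectivity of $Y$, a property transported from $V_{d}^{k}$ via the birational projection already exploited in Lemma \ref{l1}, this hyperplane is tangent to $Y$ exactly along $\{y_{1},\dots,y_{\overline{h}}\}$. Hence the fibre of $\gamma$ through $p$ is precisely the plane $\langle y_{1},\dots,y_{\overline{h}}\rangle$, and $\mathcal{F}$ is determined intrinsically by the hypersurface $\sec_{\overline{h}}(Y)$.

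The variety $Y$ is then recovered from $\mathcal{F}$ by the following dimension-count: a point $y\in Y$ lies on a $k(\overline{h}-1)$-dimensional subfamily of $\mathcal{F}$ (parametrized by $\overline{h}-1$ additional general points of $Y$), while a general point of $\sec_{\overline{h}}(Y)\setminus Y$ lies on only finitely many members of $\mathcal{F}$ by Lemma \ref{l1}. Therefore $Y$ equals the closure of the locus of $y\in\sec_{\overline{h}}(Y)$ with $\dim\{\Lambda\in\mathcal{F}:y\in\Lambda\}\geq k(\overline{h}-1)$, and this is a condition depending only on $\sec_{\overline{h}}(Y)$. Consequently $\sec_{\overline{h}}(Y_{1})=\sec_{\overline{h}}(Y_{2})$ forces $Y_{1}=Y_{2}$ for general members, proving that $\alpha$ is generically injective. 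The main obstacle in this plan is the identification of the Gauss fibres with the secant planes; however this reduces to the non-weak-defectivity statement already established in Lemma \ref{l1}, and the remaining arguments are a routine bookkeeping of dimensions.
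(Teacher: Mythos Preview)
Your well-definedness argument is essentially the paper's: both invoke Lemma~\ref{l1} for the hypersurface statement and an algebraicity argument (the paper cites Remark~\ref{fam}) for the rational map.

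For generic injectivity your route is genuinely different. The paper realizes two hypothetical $Y_{1},Y_{2}$ with $\sec_{\overline{h}}(Y_{1})=\sec_{\overline{h}}(Y_{2})=:S$ as birational linear projections of a single Veronese $V_{d}^{k}$ from centers $\Lambda_{1},\Lambda_{2}$, and then uses the codimension-one singular locus of $S$ (the last clause of Lemma~\ref{l1}, which you do not use at all) together with a secant-line argument to force $\Lambda_{1}=\Lambda_{2}$. Your approach instead reconstructs the family of $\overline{h}$-secant planes via the Gauss map of $S$ and then singles out $Y$ by a fibre-dimension count. This is more conceptual and, once it works, cleaner.

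There is, however, a real gap. You need the general Gauss fibre of $\sec_{\overline{h}}(Y)$ to equal the secant plane through the point, and for this you invoke ``non-weak-defectivity of $Y$, a property transported from $V_{d}^{k}$ via the birational projection already exploited in Lemma~\ref{l1}''. But Lemma~\ref{l1} does \emph{not} establish non-weak-defectivity of $Y$; it establishes \emph{identifiability} of $Y$ (a unique $\overline{h}$-secant plane through a general point) by using non-weak-defectivity of $V_{d}^{k}$. Identifiability alone does not force the Gauss fibre to be $(\overline{h}-1)$-dimensional: one could in principle have a positive-dimensional family of secant planes sharing the same tangent hyperplane, sweeping out a linear Gauss fibre of dimension $>\overline{h}-1$, while still having a unique secant plane through a general point of that fibre. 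Transporting non-weak-defectivity along the projection is not automatic either, since the pullback of a hyperplane tangent to $Y$ is a hyperplane of $\mathbb{P}^{N(k,d)}$ constrained to contain the center of projection, hence not general. You would need a separate argument (for instance, directly controlling the singular locus of the relevant linear system on $\mathbb{P}^{k}$ using $d\gg\overline{h}$) before your Gauss-map step goes through; after that, the dimension count recovering $Y$ is fine, though you should say that $Y$ is an irreducible component of the jump locus rather than literally equal to it.
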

\begin{proof}
Let $Y\in V^{n}_{a,2\overline{h},n-k}$ be a general point. By Lemma \ref{l1} we have 
$$\sec_{\overline{h}}(Y)\in \mathbb{P}(k[z_{0},...,z_{N}]_{m}).$$ 
Furthermore by Remark \ref{fam} assigning to a general point $Y\in V^{n}_{a,2\overline{h},n-k}$ its $\overline{h}$-secant variety $\sec_{\overline{h}}(Y)$ gives rise to a well defined rational map.\\ 
Now, we want to prove that $\alpha$ is generically injective. Let $Y_{1}\in V^{n}_{a,2\overline{h},n-k}$ be a general point and let $Y_{2}\in V^{n}_{a,2\overline{h},n-k}$ such that $\alpha(Y_{1}) = \sec_{\overline{h}}(Y_{1}) = \sec_{\overline{h}}(Y_{2}) = \alpha(Y_{2})$. Let us denote $S:=\sec_{\overline{h}}(Y_{1}) = \sec_{\overline{h}}(Y_{2})$. Now, $Y_1$ and $Y_2$ are both rational and by Remark \ref{projver} we can see them as projections of two Veronese variety $V_{d}^{k}$ and $V_{d^{'}}^{k}$ for $d,d^{'}\gg 0$.
 \[
  \begin{tikzpicture}[xscale=2.5,yscale=-1.2]
    \node (A0_0) at (0, 0) {$V_{d}^{k}$};
    \node (A1_1) at (1, 1) {$V_{d^{'}}^{k}$};
    \node (A2_0) at (0, 2) {$Y_{1}$};
    \node (A2_1) at (1, 2) {$Y_{2}$};
    \path (A1_1) edge [->,dashed] node [auto] {$\scriptstyle{}$} (A2_1);
    \path (A0_0) edge [->,dashed] node [auto] {$\scriptstyle{}$} (A2_0);
    \path (A0_0) edge [->,dashed] node [auto] {$\scriptstyle{}$} (A2_1);
    \path (A0_0) edge [->] node [auto] {$\scriptstyle{\pi_{D}}$} (A1_1);
  \end{tikzpicture}
  \]
We can assume $d\geq d^{'}$. If $d>d^{'}$ we can consider a general divisor $D\in |\mathcal{O}_{\mathbb{P}^{k}}(d-d^{'})|$ and interpret $\pi_{D}:V_{d}^{k}\rightarrow V_{d^{'}}^{k}$ as the projection from $\left\langle\nu_{k,d}(D)\right\rangle$. Note that $\pi_{D}$ is well defined on $V_{d}^{k}$. This means that we can interpret both $Y_{1}$ and $Y_{2}$ as projections of the same Veronese variety $V_{d}^{k}$. We summarize the situation in the following diagram 
\[
  \begin{tikzpicture}[xscale=1.5,yscale=-1.2]
    \node (A0_0) at (0, 0) {};
    \node (A0_1) at (1, 0) {$V_{d}^{k}\subset\mathbb{P}^{N(n,d)}$};
    \node (A1_0) at (0, 1) {$Y_{1}\subset\mathbb{P}^{N}$};
    \node (A1_2) at (2, 1) {$Y_{2}\subset\mathbb{P}^{N}$};
    \path (A0_1) edge [->,dashed,swap] node [auto] {$\scriptstyle{\pi_{1}}$} (A1_0);
    \path (A0_1) edge [->,dashed] node [auto] {$\scriptstyle{\pi_{2}}$} (A1_2);
  \end{tikzpicture}
  \]
where $\pi_{1}$ and $\pi_{2}$ are projections from linear spaces $\Lambda_{1}$ and $\Lambda_{2}$ respectively. We get two projections
$$\pi_{1}:\sec_{\overline{h}}(V_{d}^{k})\dasharrow S, \: \pi_{2}:\sec_{\overline{h}}(V_{d}^{k})\dasharrow S$$
and the composition $\gamma = \pi_{2}\circ\pi_{1}^{-1}$ is a birational automorphism of $S$. By Lemma \ref{l1} $\Sing(S)$ has codimension one. So $\gamma$ is defined on the general point of $\Sing(S)$. If $u\in\Sing(S)$ is a general point and $v,w\in\pi_{1}^{-1}(u)$ are two general points then $\pi_{2}(v) =\pi_{2}(w)\in \Sing(S)$. Therefore the line $\left\langle v,w\right\rangle$ intersects both $\Lambda_{1}$ and $\Lambda_{2}$. Furthermore $\Lambda_{1}\cap\left\langle v,w\right\rangle = \Lambda_{2}\cap \left\langle v,w\right\rangle$. Proceeding recursively we conclude that $\Lambda_{1} = \Lambda_{2}$ and therefore $Y_{1} = Y_{2}$. 
\end{proof}

Now we are ready to prove the following theorem.

\begin{Theorem}\label{RC2}
Let $X\subset\mathbb{P}^{N}$ be a unirational variety. Assume that for some positive integer $k<n$ the number $\overline{h} = \frac{N}{k+1}$ is an integer and 
$$\frac{N+n+2}{n+1} \leq\overline{h} < N-n+1.$$ 
Then the irreducible components of $VSP_{H}^{X}(h)$ are rationally connected for $h\geq \overline{h}$.
\end{Theorem}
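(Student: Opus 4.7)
My plan is two-staged: first establish the base case $h=\overline{h}$, namely that $VSP_{H}^{X}(\overline{h})$ is rationally connected, using the auxiliary machinery built in this section (the families $V^{n}_{a,2\overline{h},n-k}$, Lemma \ref{l1}, and the generic injectivity of $\alpha$); then propagate rational connectedness to all $h\geq\overline{h}$ by an induction on $h$ via Construction \ref{chain} and Proposition \ref{p2}.

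For the base case I would pass to the Grassmannian compactification $VSP_{G}^{X}(\overline{h})$, which is birational to $VSP_{H}^{X}(\overline{h})$ by Remark \ref{comp} since the hypothesis $\overline{h}<N-n+1$ gives $\overline{h}+n<N+1$. Pick two very general points $H_{x}=\langle x_{1},\dots,x_{\overline{h}}\rangle$ and $H_{y}=\langle y_{1},\dots,y_{\overline{h}}\rangle$ of $VSP_{G}^{X}(\overline{h})$ and set $Z=\{x_{1},\dots,x_{\overline{h}},y_{1},\dots,y_{\overline{h}}\}$. By Proposition \ref{p1} applied to $(a,2\overline{h},n-k)$ with $a\gg 0$, there is a rationally connected subvariety $V_{Z,n-k}\subset V^{n}_{a,2\overline{h},n-k}$ whose general element is a rational $k$-dimensional $Y\subset X$ containing $Z$, so that both $H_{x}$ and $H_{y}$ are $\overline{h}$-secant planes to $Y$ through $p$. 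I then form the incidence variety
\[
\mathcal{I} := \{(Y,H)\ :\ Y\in V^{n}_{a,2\overline{h},n-k},\ H\in VSP_{G}^{X}(\overline{h}),\ p\in H\subset\sec_{\overline{h}}(Y)\}
\]
with its projections $\pi_{1},\pi_{2}$. Lemma \ref{l1} implies that $\pi_{1}$ is generically birational onto the hyperplane section $\alpha^{-1}(\Sigma_{p})\cap V^{n}_{a,2\overline{h},n-k}$, where $\Sigma_{p}\subset\P(k[z_{0},\dots,z_{N}]_{m})$ is the hyperplane of degree-$m$ hypersurfaces through $p$, while the fiber of $\pi_{2}$ over $H'=\langle h_{1},\dots,h_{\overline{h}}\rangle$ is $V_{H'\cap X,\,n-k}$, rationally connected by Proposition \ref{p1}. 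A rational arc in $V_{Z,n-k}$ between suitable witnesses $Y_{x},Y_{y}$ for $H_{x},H_{y}$ would then lift through $\pi_{1}$ to $\mathcal{I}$ and project through $\pi_{2}$ to a rational arc in $VSP_{G}^{X}(\overline{h})$ joining $H_{x}$ and $H_{y}$; together with Proposition \ref{rcfib} this yields the rational connectedness of $VSP_{G}^{X}(\overline{h})$.

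For the inductive step I rewrite the numerical hypothesis $\overline{h}\geq(N+n+2)/(n+1)$ as $\overline{h}+1\geq(N+1)/(n+1)+2$, so that Proposition \ref{p2} applies to $VSP_{H}^{X}(h)$ for every $h\geq\overline{h}+1$. Assuming inductively that $VSP_{H}^{X}(h-1)$ is rationally connected, two very general points of $VSP_{H}^{X}(h)$ are joined by a chain of at most three copies of $VSP_{H}^{X}(h-1)$ meeting at points that are smooth in each copy and in $VSP_{H}^{X}(h)$; rational curves inside each copy concatenate at these smooth nodes and upgrade to a genuine rational curve in $VSP_{H}^{X}(h)$. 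The main obstacle is in the base case, specifically controlling the lift of the rational arc from $V_{Z,n-k}$ to $\mathcal{I}$: the point $p$ is a singular point of $\sec_{\overline{h}}(Y)$ for $Y\in V_{Z,n-k}$ (several secant planes already pass through $p$), so $\pi_{1}$ is not generically an isomorphism over $V_{Z,n-k}$ and one must consistently select the branches of $\pi_{1}$ over $Y_{x}$ and $Y_{y}$ producing $H_{x}$ and $H_{y}$. The generic injectivity of $\alpha$ established in the lemma following Lemma \ref{l1} is the tool that prevents pathological collapsing under this lifting and ultimately makes the connection possible.
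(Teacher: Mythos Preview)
Your inductive step matches the paper: the rewriting $\overline{h}\geq (N+n+2)/(n+1)\Longleftrightarrow \overline{h}+1\geq (N+1)/(n+1)+2$ and the use of Proposition~\ref{p2} to propagate from $h-1$ to $h$ is exactly what is done there.

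The base case, however, has a genuine gap, and it is precisely the one you flag without resolving. For every $Y\in V_{Z,n-k}$ both $H_{x}$ and $H_{y}$ are $\overline{h}$-secant $(\overline{h}-1)$-planes to $Y$ through $p$, so $\pi_{1}^{-1}(V_{Z,n-k})\subset\mathcal{I}$ carries two \emph{constant} sections $Y\mapsto (Y,H_{x})$ and $Y\mapsto (Y,H_{y})$. A lift of a rational arc in $V_{Z,n-k}$ starting at $(Y_{x},H_{x})$ is forced to stay on the $H_{x}$-section and ends at $(Y_{y},H_{x})$; its image under $\pi_{2}$ is the constant point $H_{x}$, not an arc from $H_{x}$ to $H_{y}$. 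Thus the lifting step, as written, produces no connection in $VSP_{G}^{X}(\overline{h})$, and the generic injectivity of $\alpha$ does not help here since the problem occurs in the fibers of $\pi_{1}$, not of $\alpha$.

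The paper circumvents this by working on the target side. One sets $V_{p}:=\overline{\alpha(V^{n}_{a,2\overline{h},n-k})}\cap H_{p}$ and defines $\beta:V_{p}\dasharrow VSP_{G}^{X}(\overline{h})$, $\sec_{\overline{h}}(Y)\mapsto H_{p}^{Y}$. Then $\alpha(V_{\{x_{i}\},n-k})\subset\overline{\beta^{-1}(H_{x})}$ and $\alpha(V_{\{y_{i}\},n-k})\subset\overline{\beta^{-1}(H_{y})}$ are rationally connected and meet along $\alpha(V_{\{x_{i},y_{i}\},n-k})$, giving rational \emph{chain} connectedness of $V_{p}$ through the codimension-one locus $M_{p}\subset V_{p}$ of secant varieties with more than one secant plane through $p$. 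The delicate point is upgrading chains to curves: one passes to the normalization $\nu:\widetilde{V}_{p}\to V_{p}$ and uses that $a\gg 0$ (so $\dim V_{\{x_{i},y_{i}\},n-k}\geq a$) to force the pulled-back fibers still to intersect in $\widetilde{V}_{p}$; the chains then pass through general, hence smooth, points of $\nu^{-1}(M_{p})$ and can be smoothed. This normalization-plus-large-$a$ step is the missing idea in your argument.
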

\begin{proof}
We prove the statement for $h = \overline{h}$. Note that $\overline{h}\geq \frac{N+n+2}{n+1}$ implies 
$$\overline{h}+1\geq\frac{N+1}{n+1}+2.$$
Therefore in order to conclude for $h\geq\overline{h}$ we can apply Proposition \ref{p2}.\\ 
Let $V = \overline{\alpha(V^{n}_{a,2\overline{h},n-k})}$ be the closure of the image of $V^{n}_{a,2\overline{h},n-k}$ and let $H_{p}$ be the hyperplane parametrizing hypersurfaces in $\mathbb{P}(k[z_{0},...,z_{N}]_{m})$ passing though the general point $p\in\mathbb{P}^{N}$. We consider the intersection $V_{p}=V\cap H_{p}$ parametrizing $\overline{h}$-secant varieties through $p$. By Lemma \ref{l1} through $p\in\sec_{\overline{h}}(Y)$ there is a unique $(\overline{h}-1)$-plane $H_{p}^{Y}$ which is $\overline{h}$-secant to $Y$. Then we can define the rational map
$$
\begin{array}{cccc}
\beta: & V_{p} & \dasharrow & VSP_{G}^{X}(\overline{h})\\
 & \sec_{\overline{h}}(Y) & \longmapsto & H_{p}^{Y}
\end{array}
$$
By Proposition \ref{p1} a general $\overline{h}$-secant linear space to $X$ is $\overline{h}$-secant to some $Y\in V^{n}_{a,2\overline{h},n-k}$. Then the map $\beta$ is dominant. Let $H_{x}=\left\langle x_1,...,x_{\overline{h}}\right\rangle$ and $H_{y}=\left\langle y_1,...,y_{\overline{h}}\right\rangle$ be two general points of $VSP_{G}^{X}(h)$. By Proposition \ref{p1} the varieties $V_{\{x_{1},...,x_{\overline{h}}\},n-k}$ and $V_{\{y_{1},...,y_{\overline{h}}\},n-k}$ are rationally connected. Furthermore we have
$$\alpha(V_{\{x_{1},...,x_{\overline{h}}\},n-k})\subseteq\overline{\beta^{-1}(H_{x})}$$
$$\alpha(V_{\{y_{1},...,y_{\overline{h}}\},n-k})\subseteq\overline{\beta^{-1}(H_{y})}$$
and
$$\alpha(V_{\{x_{1},...,x_{\overline{h}},y_{1},...,y_{\overline{h}}\},n-k})\subseteq\overline{\beta^{-1}(H_{x})}\cap \overline{\beta^{-1}(H_{y})}.$$
Therefore $V_{p}$ is rationally chain connected by chains of two rational curves intersecting in a general point of $\alpha(V_{\{y_{1},...,y_{\overline{h}}\},n-k})$.\\
Now, let $M_{p}\subset V_{p}$ be the variety parametrizing $\overline{h}$-secant varieties having more than one $\overline{h}$-secant $(\overline{h}-1)$-plane through $p$. By Lemma \ref{l1} a general $\sec_{\overline{h}}(Y)\in V$ is singular in codimension one. Therefore $M_{p}$ has codimension two in $V$ and since $M_{p}\subset V_{p}\subset V$ we have
$$\codim_{V_{p}}M_{p} = 1.$$
Assume that the general point of $\Sing(\sec_{\overline{h}}(Y))$ is of multiplicity $t\geq 2$. By Terracini's lemma \cite[Theorem 1.1]{CC} there are $t$ $\overline{h}$-secant $(\overline{h}-1)$-planes trough a general point of $\Sing(\sec_{\overline{h}}(Y))$. In particular there exist two zero dimensional subschemes $\{z_{1},...,z_{\overline{h}}\}$ and $\{w_{1},...,w_{\overline{h}}\}$ such that 
$$\sec_{\overline{h}}(Y)\in\alpha(V_{\{z_{1},...,z_{\overline{h}},w_{1},...,w_{\overline{h}}\},n-k}).$$
Therefore $V_{p}$ is rationally chain connected by chains of rational curves intersecting in general points of $M_{p}$. Let us consider the normalization 
$$\nu:\widetilde{V}_{p}\rightarrow V_{p}$$
and the intersection
$$\mathcal{I}_{x,y}=\alpha(V_{\{x_{1},...,x_{\overline{h}}\},n-k})\cap\alpha(V_{\{y_{1},...,y_{\overline{h}}\},n-k}).$$
Then $\dim(\mathcal{I}_{x,y})\geq a$. Let $\widetilde{M}_{p} = \nu^{-1}(M_{p})$. We have that $\nu_{|\widetilde{M}_{p}}$ is a finite and $\rm\acute{e}$tale morphism outside of a codimension one set $K$. For any point $q\in M_{p}\setminus K$ there is an $\rm\acute{e}$tale open neighborhood $U_{q}$ such that $\nu_{|\widetilde{M}_{p}}$ restricted to $\nu_{|\widetilde{M}_{p}}^{-1}(U_{q})$ is finite and $\rm\acute{e}$tale. Now, in the $\rm\acute{e}$tale topology there exists an open subset $B\subset M_{p}$ such that $K\subset B$. The complement $B^{c}$ is compact. Then we can cover $B^{c}$ by finitely many $\{U_{q_{i}}\}_{i=1,...,r}$. Now, the map $\nu_{|\widetilde{M}_{p}}$ restricted to $\nu_{|\widetilde{M}_{p}}^{-1}(U_{q_{i}})$ is $\rm\acute{e}$tale. Since $\{x_{1},...,x_{\overline{h}}\}$ and $\{y_{1},...,y_{\overline{h}}\}$ are general, $V$ is irreducible and the $\{U_{q_{i}}\}_{i=1,...,r}$ are finitely many we have
$$\dim(\nu_{|\widetilde{M}_{p}}^{-1}(V_{\{x_{1},...,x_{\overline{h}}\},n-k})\cap \nu_{|\widetilde{M}_{p}}^{-1}(V_{\{y_{1},...,y_{\overline{h}}\},n-k}))> 0.$$
Therefore $\widetilde{V}_{p}$ is rationally chain connected by chains of rational curves though general points of $\nu^{-1}(M_{p})$. Then $\widetilde{V}_{p}$ and $V_{p}$ are rationally connected as well. Finally, since the rational map $\beta$ is dominant $VSP^{X}_{H}(\overline{h})$ is rationally connected.
\end{proof}

\begin{Remark}\label{sharp}
When $X = V_{d}^{n}$ is the Veronese variety, from Theorem \ref{RC2} we recover \cite[Theorem 4.1]{MMe}. In \cite{IR} \textit{A. Iliev} and \textit{K. Ranestad} proved that if $X = V_{3}^{5}$ then $VSP_{H}^{X}(10)$ is a Hyperk\"ahler manifold deformation equivalent to the Hilbert square of a $K3$ surface of genus $8$. In particular $VSP_{H}^{X}(10)$ can not be rationally connected. In this case we have $N(n,d) = \bin{n+d}{n}-1 = 55$, so $k+1 = 5$, and Theorem \ref{RC2} holds for $h\geq 11$.  
\end{Remark}

In what follows we work out some numbers which make Theorem \ref{RC2} work. We denote by $\overline{h}$ the smallest $h$ for which Theorem \ref{RC2} holds.

\subsubsection*{Grassmannians}
The Grassmannian $\G(r,n)$ parametrizing $r$-linear subspaces of $\mathbb{P}^{n}$ is a rational homogeneous variety of dimension $(r+1)(n-r)$, and has a natural embedding
$$\G(r,n)\hookrightarrow \mathbb{P}^{N},$$
with $N = \binom{n+1}{r+1}-1$, called the Pl\"ucker embedding.
\begin{center}
{\small
\begin{tabular}{|c|c|c|c|c|c|}
\hline
$r$ & $n$ & $\dim(\G(r,n))$ & $N$ & $k$ & $\overline{h}$\\
\hline
1 & 4 & 6 & 9 & 2 & 3\\
1 & 5 & 8 & 14 & 6 & 3\\
2 & 6 & 12 & 34 & 1 & 17\\ 
2 & 7 & 15 & 55 & 10 & 5\\
3 & 8 & 20 & 125 & 4 & 25\\
\hline
\end{tabular}
}
\end{center}
\subsubsection*{Segre-Veronese Varieties}
Combining the Segre and the Veronese embeddings we can define the Segre-Veronese embedding
\begin{center}
$\psi: \mathbb{P}^{n}\times \mathbb{P}^{m} \rightarrow \mathbb{P}^{N}$,
\end{center}
with $N = \binom{a+n}{n}\binom{b+m}{m}-1$, using the sheaf $\mathcal{O}_{\mathbb{P}^{n}}(a)$ on $\mathbb{P}^{n}$ and the sheaf $\mathcal{O}_{\mathbb{P}^{m}}(b)$ on $\mathbb{P}^{m}$. Let $SV_{a,b}^{n,m} = \psi(\mathbb{P}^{n}\times \mathbb{P}^{m})$ be the Segre-Veronese variety.
\begin{center}
{\small
\begin{tabular}{|c|c|c|c|c|c|c|c|}
\hline
$n$ & $m$ & $a$ & $b$ & $\dim(SV_{a,b}^{n,m})$ & $N$ & $k$ & $\overline{h}$\\
\hline
2 & 3 & 1 & 3 & 5 & 39 & 2 & 13\\
4 & 4 & 2 & 3 & 8 & 524 & 3 & 131\\
4 &	4 &	3 &	3 &	8 &	1224 & 3 & 153\\
5 &	5 &	3 &	3 &	10 & 3135 & 4 & 627\\
5 &	5 &	3 &	4 &	10 & 7055 & 4 & 1411\\
\hline
\end{tabular}
}
\end{center}

\section{Canonical decompositions and unirationality}\label{candec}
In this section we consider the case when there exists a positive integer $\overline{h}$ such that $\sec_{\overline{h}}(X) = \mathbb{P}^{N}$ and $VSP_{H}^{X}(\overline{h})$ is a single point, that is through a general point $p\in\sec_{\overline{h}}(X)$ passes exactly one $(\overline{h}-1)$-plane $\overline{h}$-secant to $X$. In the following we prove that the existence of such a canonical decomposition yields the unirationality of $VSP_{H}^{X}(h)$ for $h\geq\overline{h}$.

\begin{Theorem}\label{uu}
Let $X\subset\mathbb{P}^{N}$ be a rational variety. Assume that there exists a positive integer $\overline{h}$ such that $\sec_{\overline{h}}(X) = \mathbb{P}^{N}$ and $VSP_{H}^{X}(\overline{h})$ is a single point. Then $VSP_{H}^{X}(h)$ is unirational for any $h\geq\overline{h}$.
\end{Theorem}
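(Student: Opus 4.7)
The plan is to mirror the proof of Theorem \ref{mindeg}. The case $h=\overline{h}$ is trivial, since by hypothesis $VSP_H^X(\overline{h})$ is a single point, hence unirational. So assume $h>\overline{h}$.

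First I would record the dimensional identity that makes the construction work. The hypothesis that $VSP_H^X(\overline{h})$ is a point together with Proposition \ref{dim} forces $\overline{h}(n+1)=N+1$, whence
$$\dim VSP_H^X(h)=h(n+1)-N-1=(h-\overline{h})(n+1).$$
This tells me which rational variety should dominate $VSP_H^X(h)$.

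Next, following the strategy of Theorem \ref{mindeg}, I would introduce
$$Y=\{((x_1,\lambda_1),\ldots,(x_{h-\overline{h}},\lambda_{h-\overline{h}}),\Lambda)\:|\:p-\sum_{i=1}^{h-\overline{h}}\lambda_i x_i\in\Lambda\}\subseteq (X\times\mathbb{P}^1)^{h-\overline{h}}\times\G(\overline{h}-1,N),$$
together with the generically finite rational map
$$\alpha:Y\dasharrow VSP_H^X(h),\qquad((x_i,\lambda_i)_i,\Lambda)\longmapsto\{x_1,\ldots,x_{h-\overline{h}},\hat{x}_1,\ldots,\hat{x}_{\overline{h}}\},$$
where $\Lambda\cap X=\{\hat{x}_1,\ldots,\hat{x}_{\overline{h}}\}$. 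Since permuting which $\overline{h}$ of the $h$ chosen points are declared to play the canonical role gives all preimages, $\alpha$ is generically finite of degree $\binom{h}{h-\overline{h}}$, and the dimensional identity above forces $\alpha$ to be dominant.

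It therefore suffices to prove that $Y$ is rational. Here the canonical decomposition hypothesis enters decisively: for $((x_i,\lambda_i))$ generic, the residual point $p-\sum_i\lambda_i x_i$ is still generic in $\mathbb{P}^N$, so it lies on a \emph{unique} $(\overline{h}-1)$-plane $\overline{h}$-secant to $X$. Thus the first projection $Y\to(X\times\mathbb{P}^1)^{h-\overline{h}}$ is birational, and since $X$ is rational, so is $Y$. Finite dominance of a unirational variety then yields unirationality of $VSP_H^X(h)$.

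The main obstacle I expect is precisely this genericity claim: one has to verify that subtracting $\sum_i\lambda_i x_i$ from the generic point $p$ lands us inside the open Zariski subset of $\mathbb{P}^N$ on which $VSP_H^X(\overline{h})$ is a unique reduced point. This reduces to showing that the rational map $(X\times\mathbb{P}^1)^{h-\overline{h}}\dasharrow\mathbb{P}^N$, $((x_i,\lambda_i))\mapsto p-\sum_i\lambda_i x_i$, is dominant, which follows from the non-degeneracy of $X$ (the affine cone over $X$ spans all of $k^{N+1}$, and a single factor $X\times\mathbb{P}^1$ already suffices to sweep out a Zariski open neighbourhood of $p$ under the translation map).
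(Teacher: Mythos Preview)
Your overall strategy coincides with the paper's, but the incidence variety $Y$ you write down is too large and the key claim about it is false. The fibre of the first projection $Y\to(X\times\mathbb{P}^1)^{h-\overline{h}}$ over a general point is the \emph{entire} Grassmannian of $(\overline{h}-1)$-planes through the residual point $q=p-\sum\lambda_ix_i$, of dimension $(\overline{h}-1)(N-\overline{h}+1)$, so that projection is certainly not birational. Correspondingly, the assignment $\Lambda\mapsto\Lambda\cap X=\{\hat x_1,\dots,\hat x_{\overline h}\}$ does not define a rational map on $Y$: from $\overline{h}(n+1)=N+1$ one gets $(\overline{h}-1)+n<N$ whenever $\overline{h}\geq 2$, so a general $(\overline{h}-1)$-plane through $q$ misses $X$ altogether, and $\alpha$ is only defined on a proper closed subvariety of $Y$. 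The reason the analogous construction worked in Theorem~\ref{mindeg} is precisely that there $\dim X+(d-1)=N$ and $\deg X=d$, so a \emph{general} $(d-1)$-plane is $d$-secant; here only one very particular $(\overline{h}-1)$-plane through $q$ is $\overline{h}$-secant.

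The repair is immediate and is exactly what the paper does: since the canonical-decomposition hypothesis pins down $\Lambda$ from $q$, drop the Grassmannian factor and work directly with the rational variety $(X\times\mathbb{P}^1)^{h-\overline{h}}$ (the paper uses the birationally equivalent incidence variety $\mathcal I=\{(x_1,\dots,x_{h-\overline h},q):q\in\langle p,x_1,\dots,x_{h-\overline h}\rangle\}\subset X^{h-\overline h}\times\mathbb{P}^N$), mapping $((x_i,\lambda_i))$ to $\{x_1,\dots,x_{h-\overline h},\hat x_1,\dots,\hat x_{\overline h}\}$ where $\{\hat x_j\}$ is the unique $\overline h$-decomposition of $q$. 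Your dimension count then goes through verbatim. One further small correction to your final paragraph: the map $((x_i,\lambda_i))\mapsto p-\sum\lambda_ix_i$ need not be dominant (for $h=\overline h+1$ its image is the cone over $X$ with vertex $p$, of dimension $n+1$), and you do not need it to be; it suffices that its image meets the open locus of $\mathbb{P}^N$ on which $VSP_H^X(\overline h)$ is a single reduced point, which is clear because $p$ itself lies in the image.
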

\begin{proof}
As usual, let us fix a general point $p\in\mathbb{P}^{N}$. For any $h>\overline{h}$ let us consider the incidence variety
   \[
  \begin{tikzpicture}[xscale=1.5,yscale=-1.2]
    \node (A0_1) at (1, 0) {$\mathcal{I} = \{\{x_{1},...,x_{h-\overline{h}},q\}\; |\; q\in \left\langle p,x_{1},...,x_{h-\overline{h}}\right\rangle\}\subseteq(X)^{h-\overline{h}}\times\mathbb{P}^{N}$};
    \node (A1_0) at (0, 1) {$(X)^{h-\overline{h}}$};
    \node (A1_2) at (2, 1) {$\mathbb{P}^{N}$};
    \path (A0_1) edge [->]node [auto] {$\scriptstyle{\psi}$} (A1_2);
    \path (A0_1) edge [->]node [auto,swap] {$\scriptstyle{\phi}$} (A1_0);
  \end{tikzpicture}
  \]
Now, $X$ is rational, $\phi$ is dominant and its general fiber is a linear subspace of dimension $h-\overline{h}$ of $\mathbb{P}^{N}$. Therefore $\mathcal{I}$ is rational as well. By hypothesis there exists a unique zero dimensional subscheme $\{y_{1},...,y_{\overline{h}}\}$ spanning a $(\overline{h}-1)$-plane containing $q$. Furthermore $q\in\left\langle x_{1},...,x_{h-\overline{h}},p\right\rangle$ and $q\in\left\langle y_{1},...,y_{\overline{h}}\right\rangle$ imply 
$$p\in\left\langle y_{1},...,y_{\overline{h}},x_{1},...,x_{h-\overline{h}}\right\rangle.$$
Therefore we have the generically finite rational map
$$
\begin{array}{cccc}
\chi: & \mathcal{I} & \dasharrow & VSP_{H}^{X}(h)\\
 & \{x_{1},...,x_{h-\overline{h}}\} & \longmapsto & \{y_{1},...,y_{\overline{h}},x_{1},...,x_{h-\overline{h}}\}
\end{array}
$$
Under our hypothesis on $\sec_{\overline{h}}(X)$ we have
$$\dim(\sec_{\overline{h}}(X)) = n\overline{h}+\overline{h}-1 = N,$$
where $n = \dim(X)$. Furthermore $\dim(\mathcal{I}) = n(h-\overline{h})+h-\overline{h} = nh-h-(n\overline{h}+\overline{h})$. Now, substituting $n\overline{h}+\overline{h} = N+1$ we get $\dim(\mathcal{I}) = h(n+1)-N-1 = \dim(VSP_{H}^{X}(h))$. Therefore $\chi$ is dominant and $VSP_{H}^{X}(h)$ is unirational.
\end{proof}

\begin{Remark}
Under the hypothesis of Theorem \ref{uu} the natural map $\Sec_{\overline{h}}(X)\rightarrow\mathbb{P}^{N}$ is dominant and birational. Then, by \cite[Theorem 2.1]{Me2}, the variety $X$ is rational. 
\end{Remark}

In the following we consider the classical case $X = V_{d}^{n}$ of the Veronese variety. When the secant variety of the Veronese variety fills the projective space there are few cases in which we have the uniqueness of the decomposition.

\begin{Theorem}\cite[Theorem 1]{Me2}
Fix integers $d > n > 1$. Then a general homogeneous polynomial $F\in k[x_{0},...,x_{n}]_{d}$ can be expressed as a sum of $d$-th powers of linear forms in a unique way if and only if $d = 5$ and $n = 2$.
\end{Theorem}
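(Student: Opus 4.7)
The plan is to translate the existence of a unique decomposition into a combination of a numerical ``perfect balance'' condition and non-$h$-weak-defectivity of $V_d^n$, enumerate the surviving triples $(n,d,h)$, and dispose of all but the Hilbert case $(n,d,h)=(2,5,7)$.

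First I would establish the necessity of $h(n+1)=\binom{n+d}{d}$. For a general $F$ to admit any decomposition as a sum of $h$ powers one needs $\sec_h(V_d^n)=\mathbb{P}^N$. If in addition the decomposition is unique then $VSP(F,h)$ is zero-dimensional, so by Proposition \ref{dim} we must have $h(n+1)=N+1=\binom{n+d}{d}$. In particular $V_d^n$ is non-$h$-defective, so by the Alexander--Hirschowitz theorem the pair $(n,d)$ must avoid the classical exceptional list. Combined with the divisibility $(n+1)\mid\binom{n+d}{d}$ this leaves a sparse family of admissible triples with $d>n>1$.

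Next I would reformulate uniqueness via weak defectivity. By Terracini's lemma together with the Chiantini--Ciliberto theorem (the tool already used in Lemma \ref{l1}), through a general point of $\sec_h(V_d^n)$ there passes a unique $(h-1)$-plane $h$-secant to $V_d^n$ if and only if $V_d^n$ is not $h$-weakly defective; equivalently, the general hypersurface of degree $d$ in $\mathbb{P}^n$ singular at $h$ general points is singular exactly at those $h$ points, with no further base-singularity. The existence direction $(n,d,h)=(2,5,7)$ is exactly Hilbert's theorem, recovered in the paper as Theorem \ref{hi}: the apolar ideal $F^{\perp}$ of a general plane quintic is generated by a regular net of cubics whose base locus is precisely the required seven points, producing the unique decomposition.

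Finally I would eliminate every other balanced non-defective triple. The small candidates such as $(2,4,5)$ are ruled out immediately by Alexander--Hirschowitz defectivity. For the remaining infinite but sparse family of balanced pairs, in the linear system of hypersurfaces of degree $d$ in $\mathbb{P}^n$ with imposed double points at $h$ general points one exhibits an additional enforced singularity, forcing $h$-weak defectivity and hence the presence of at least two decompositions. For $n=2$ and $d\geq 7$ this comes from a residual Castelnuovo-type argument on plane curves; for $n\geq 3$ one uses that $h$ general points lie on rational normal curves or low-degree scrolls whose secant geometry forces an unavoidable extra singular base point.

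The main obstacle will be making the last step uniform: the balanced triples $(n,d,h)$ form an infinite, number-theoretically irregular family, and a bare case-by-case check is unsatisfying. The cleanest approach is to recast non-uniqueness as the statement that the natural map $\Sec_h(V_d^n)\to\mathbb{P}^N$ has degree strictly greater than one, and then bound this degree below by $2$ outside the Hilbert case by means of apolarity and a parameter count on the apolar linear system.
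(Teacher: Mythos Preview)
The paper does not prove this theorem: it is stated with an explicit citation to \cite[Theorem 1]{Me2} and no argument is given. What the present paper does prove are the three classical uniqueness results listed immediately after the statement (Proposition \ref{sy1}, Theorem \ref{hi}, Theorem \ref{sy}), which together supply only the ``if'' direction of the theorem (via Theorem \ref{hi}) and two further low-dimensional instances outside the range $d>n>1$. There is therefore nothing in the paper to compare your proposal against.

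That said, your outline is broadly the shape of Mella's actual argument in \cite{Me2}: reduce to the perfect-balance condition $h(n+1)=\binom{n+d}{d}$, translate uniqueness into non-$h$-weak-defectivity of $V_d^n$ via Terracini and \cite{CC}, and then show that for every balanced pair with $d>n>1$ other than $(n,d)=(2,5)$ the linear system $|\mathcal{O}_{\mathbb{P}^n}(d)-2p_1-\cdots-2p_h|$ acquires a positive-dimensional singular base locus. The genuine content, and the genuine gap in your proposal, is entirely in this last step. Phrases such as ``a residual Castelnuovo-type argument'' for $n=2$ and ``rational normal curves or low-degree scrolls whose secant geometry forces an unavoidable extra singular base point'' for $n\geq 3$ are placeholders, not arguments: Mella's proof requires a careful degeneration of the points and an analysis of the limit linear system, and the hardest cases (e.g.\ $n=3$, $d=7$, $h=30$) are not handled by any off-the-shelf construction. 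Your final paragraph, proposing to bound the degree of $\Sec_h(V_d^n)\to\mathbb{P}^N$ from below by $2$ via ``apolarity and a parameter count'', is a restatement of the goal rather than a method: the degree is exactly what is in question, and a parameter count alone cannot detect it once the expected dimension is zero. If you intend to write a self-contained proof, this is the step that needs an actual mechanism.
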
 

It is known that a general homogeneous polynomial $F\in k[x_{0},...,x_{n}]_{d}$ admits a canonical decomposition as a sum of $d$-th powers of linear forms in the following cases.
\begin{itemize}
\item[-] $n = 1$, $d = 2h-1$ \cite{Sy},
\item[-] $n = d = 3$, $h = 5$ \cite{Sy},
\item[-] $n = 2$, $d = 5$, $h = 7$ \cite{Hi}.
\end{itemize}

We give very simple and geometrical proofs of these facts.

\begin{Proposition}\label{sy1}
Let $F\in k[x_{0},x_{1}]_{2h-1}$ be a general homogeneous polynomial. There exists a unique decomposition of $F$ as sum of $h$ linear forms. 
\end{Proposition}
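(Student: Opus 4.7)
The plan is to recast the proposition geometrically: $VSP(F,h)$ for $F\in k[x_{0},x_{1}]_{2h-1}$ is $VSP_{H}^{C}(h)$ where $C=V_{2h-1}^{1}\subset\P^{2h-1}$ is the rational normal curve, and its points correspond to unordered $h$-tuples on $C$ whose span contains $[F]\in\P^{2h-1}$. Uniqueness of the decomposition is therefore equivalent to the geometric statement that through a general point $p\in\P^{2h-1}$ there passes a \emph{unique} $(h-1)$-plane $h$-secant to $C$. The whole argument will rest on the classical general-position property of $C$: any collection of at most $2h$ distinct points on $V_{2h-1}^{1}$ is linearly independent in $\P^{2h-1}$, which follows from the non-vanishing of the appropriate Vandermonde determinant.

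A standard dimension count combined with the non-defectivity of rational normal curves gives $\dim\Sec_{h}(C)=2h-1$, so $\sec_{h}(C)=\P^{2h-1}$ and the secant map $\pi_{h}:\Sec_{h}(C)\to\P^{2h-1}$ is dominant between irreducible varieties of the same dimension; it suffices to prove generic injectivity. As a preliminary I would observe that for any $(h-1)$-plane $\Pi=\langle P_{1},\dots,P_{h}\rangle$ spanned by distinct points of $C$, one has $\Pi\cap C=\{P_{1},\dots,P_{h}\}$ set-theoretically, since an additional intersection point would produce $h+1$ linearly dependent points on $C$. Thus every $h$-secant plane $\Pi$ determines its $h$-tuple of points on $C$, and hence the decomposition associated with $p\in\Pi$.

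The heart of the argument is uniqueness of $\Pi$ through a general $p$. Suppose $\Pi_{1}=\langle P_{1},\dots,P_{h}\rangle$ and $\Pi_{2}=\langle Q_{1},\dots,Q_{h}\rangle$ both pass through $p$, and let $s$ be the number of shared points and $c=\dim(\Pi_{1}\cap\Pi_{2})$. The $2h-s$ distinct points in the union span a $(2h-s-1)$-plane contained in $\langle\Pi_{1},\Pi_{2}\rangle$ of dimension $2h-2-c$, giving $s\geq c+1$; conversely the $s$ shared points lie in $\Pi_{1}\cap\Pi_{2}$ and span an $(s-1)$-plane, giving $s-1\leq c$. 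Together these force $c=s-1$, so $\Pi_{1}\cap\Pi_{2}$ is exactly the span of the shared points. This places $p$ in $\sec_{s}(C)$; but for a general $p\notin\sec_{h-1}(C)$ (a proper closed subset of dimension $2h-3$), the only possibility is $s=h$, whence $\Pi_{1}=\Pi_{2}$.

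I expect the main (though short) obstacle to be the combinatorial bookkeeping among the three quantities $s$, $c$, and $2h-s-1$ in the third paragraph; once the inequalities $s\geq c+1$ and $s\leq c+1$ are both extracted, the conclusion is immediate. All other ingredients, namely the dimension count for $\sec_{h}(C)$, the general-position property of rational normal curves, and the descent from $\sec_{s}(C)\subsetneq\P^{2h-1}$ for $s<h$, are classical.
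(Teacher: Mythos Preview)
Your proof is correct and follows essentially the same route as the paper: both exploit the general-position property of the rational normal curve $C=V_{2h-1}^{1}\subset\P^{2h-1}$ to control the intersection of two putative $h$-secant $(h-1)$-planes through $F$. The only cosmetic difference is the endgame---the paper extends $\langle H_l,H_L\rangle$ by extra curve points to a hyperplane and invokes B\'ezout ($\deg C=2h-1<2h$), whereas you deduce $p\in\Pi_1\cap\Pi_2=\langle\text{shared points}\rangle\subset\sec_{h-1}(C)$ directly; the underlying linear-algebra computation is identical.
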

\begin{proof} Let $X$ be the rational normal curve of degree $2h-1$ in $\mathbb{P}^{2h-1}$. Since $\dim(\sec_{h}(X)) = h+(h-1) = 2h-1$ there exists a decomposition of $F$.\\
Suppose that $\{l_{1},...,l_{h}\}$ and $\{L_{1},...,L_{h}\}$ are two distinct decompositions of $F$. Let $H_{l}$ and $H_{L}$ be the two $(h-1)$-planes generated by the decompositions. The point $F_{2h-1}$ belongs to $H_{l} \cap H_{L}$. So the linear space $H = \langle H_{l},H_{L}\rangle$ has dimension 
$$
\dim(H) \leq (h-1)+(h-1) = 2h-2.
$$
If $H_{l} \cap H_{L} = \{F\}$, then $\dim(H) = (h-1)+(h-1) = 2h-2$. So $H$ is a hyperplane in $\mathbb{P}^{2h-1}$ and $H\cdot X \geq 2h$. A contradiction because $\deg(X) = 2h-1$.\\
If $H_{l}$ and $H_{L}$ have $s$ common points, then $H_{l}$ and $H_{L}$ intersect in $s+1$ points $q_{1},...,q_{s},F$. In this case $H_{l} \cap H_{L}\cong\mathbb{P}^{s}$ and $\dim(H) = 2h-2-s$. We choose $s$ points $p_{1},...,p_{s}$ on $X$ in general position. So $\Pi = \langle H,p_{1},...,p_{k}\rangle$ is a hyperplane such that $\Pi\cdot X \geq 2h-s+s = 2h$, a contradiction. We conclude that the decomposition of $F$ in $h$ linear factors is unique.   
\end{proof}

In order to reconstruct the decomposition we consider the following construction.
\begin{Construction}\label{cn1}
The partial derivatives of order $h-2$ of $F$ are $\binom{h-2+1}{1} = h-1$ homogeneous polynomials of degree $h+1$. Let $\nu_{h+1}^{1}:\mathbb{P}^{1}\rightarrow \mathbb{P}^{h+1}$ be the Veronese embedding and let $X = \nu_{h+1}^{1}(\mathbb{P}^{1})$ be the corresponding rational normal curve. Consider the projection
$$\pi:\mathbb{P}^{h+1}\dasharrow \mathbb{P}^{2}$$
from the $(h-2)$-plane $H_{\partial}$ spanned by the partial derivatives. Since the decomposition $\{L_{1},...,L_{h}\}$ of $F$ is unique, the projection $\overline{X} = \pi(X)$ has a unique singular point $P = \pi(\langle L_{1}^{h+1},...,L_{h}^{h+1}\rangle)$ of multiplicity $h$. Now to find the decomposition we have to compute the intersection $H \cdot X = \{L_{1}^{h+1},...,L_{h}^{h+1}\}$, where $H = \langle H_{\partial},P\rangle$. 
\end{Construction}

\begin{Example}
We consider the polynomial
$$F = x_0^{3} + x_0^{2}x_1 - x_0x_1^{2} + x_1^{3} \in k[x_0,x_1]_{3}.$$
that is the point $[1 : 1 : 1 : 1] \in \mathbb{P}^{3}$. The projection from $[1 : 1 : 1 : 1]$ to the plane
$\{X = 0\} \cong \mathbb{P}^{2}$ is given by
$$\pi:\mathbb{P}^{3}\dasharrow \mathbb{P}^{2}, \: [X : Y : Z : W ] \mapsto [Y-X : X+Z : W-X].$$  
We compute the projection $C = \pi(X)$
of the twisted cubic curve $X$, and the singular point of $C$,
$$P = \Sing(C) = [4 : 10 : 9].$$
The line $L = \langle [1 : 1 : 1 : 1],P\rangle$ is given by the following equations
$$
\left\{
\begin{array}{l}
3X-5Y-2Z = 0,\\
5-9Y+4W = 0.
\end{array}
\right.
$$
The intersection $X\cdot L$ is given by 
$$
\left.
\begin{array}{l}
L_{1}^{3} = [0.0515957 : 0.4157801 : 1.1168439 : 1],\\
L_{2}^{3} = [155.0515957 : 86.5842198 : 16.1168439 : 1].
\end{array}
\right.
$$
These points correspond to the linear forms
$$
\left.
\begin{array}{l}
L_{1} = -0.3722812x_0 + x_1,\\
L_{2} = 5.3722813x_0 + x_1.
\end{array}
\right.
$$
Indeed we have
$$F = 0.99322 \cdot (-0.3722812x_0 + x_1)^{3} + 0.00678 \cdot (5.3722813x_0 + x_1)^{3}.$$
\end{Example}
Now we consider Hilbert's theorem.

\begin{Theorem}\label{hi}
Let $F \in k[x_{0},x_{1},x_{2}]_{5}$ be a general homogeneous polynomial. Then $F$ can be decomposed as sum of seven linear forms
$$F = L_{1}^{5}+...+L_{7}^{5}.$$
Furthermore the decomposition is unique.
\end{Theorem}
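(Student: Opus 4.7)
My plan is to prove existence and uniqueness separately, and then read off the reconstruction algorithm. The approach parallels Construction~\ref{cn1}: project a suitable Veronese from the linear span of certain partial derivatives of $F$, and exploit the fact that the seven decomposition points collapse to a single point of the projected image.

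For existence, the Veronese surface $V_{5}^{2}\subset\mathbb{P}^{20}$ has expected $7$-secant dimension $2\cdot 7+6=20=\dim\mathbb{P}^{20}$ and is non-$7$-defective by Alexander--Hirschowitz (the ternary quintic case is not exceptional), so $\sec_{7}(V_{5}^{2})=\mathbb{P}^{20}$ and a general $F\in k[x_{0},x_{1},x_{2}]_{5}$ lies on some $\mathbb{P}^{6}$ that is $7$-secant to $V_{5}^{2}$; consequently $F$ admits at least one decomposition. For uniqueness I would use apolarity. For general $F$ the apolar algebra $k[x_{0},x_{1},x_{2}]/F^{\perp}$ is Gorenstein with Hilbert function $(1,3,6,6,3,1)$, so $\dim(F^{\perp})_{3}=4$. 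The six second partial derivatives of $F$ are cubics, linearly independent for general $F$, and span a $6$-dimensional subspace $H_{\partial}\subset S_{3}=k[x_{0},x_{1},x_{2}]_{3}$. A direct computation of $\partial^{2}(L^{5})/\partial x_{a}\partial x_{b}$ shows that for every decomposition $F=\sum_{i=1}^{7}c_{i}L_{i}^{5}$ the subspace $H_{\partial}$ is contained in $\langle L_{1}^{3},\dots,L_{7}^{3}\rangle$ with codimension one. Thus the projection $\pi_{H_{\partial}}\colon\mathbb{P}^{9}\dasharrow\mathbb{P}^{3}$, which under the identification $\mathbb{P}^{2}\cong V_{3}^{2}$ is given by the $4$-dimensional linear system $(F^{\perp})_{3}=H_{\partial}^{\perp}$, collapses all seven points $L_{i}^{3}\in V_{3}^{2}$ to a single point $p^{*}\in\overline{V}:=\pi_{H_{\partial}}(V_{3}^{2})$, and the fibre $\pi_{H_{\partial}}^{-1}(p^{*})\cap V_{3}^{2}$ is the length-$9$ base locus of the cubic net $I_{Z}^{(3)}\subset(F^{\perp})_{3}$: the seven $L_{i}$ together with two \textit{residual} points $r_{1},r_{2}$, by B\'ezout.

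Suppose a second decomposition $\{M_{1},\dots,M_{7}\}$ existed. Then $I_{\{L_{i}\}}^{(3)}$ and $I_{\{M_{j}\}}^{(3)}$ are two $3$-dimensional subspaces of the $4$-dimensional $(F^{\perp})_{3}$, so their intersection has dimension at least two; a standard postulation count on cubics through $\{L_{i}\}\cup\{M_{j}\}$ then forces $|\{L_{i}\}\cap\{M_{j}\}|\geq 6$. If the overlap has size seven the two decompositions coincide; otherwise one obtains an eight-point configuration $\{L_{1},\dots,L_{7},M_{7}\}$ supporting a non-trivial linear dependency among the eight fifth powers. For general $F$ this dependency is ruled out by a Terracini argument combined with the apolarity condition in degree four, where $\dim(F^{\perp})_{4}=12$: Gorenstein duality of the Hilbert function $(1,3,6,6,3,1)$ forces $\{M_{j}\}=\{L_{i}\}$. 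This \textit{six-point overlap} case is the main obstacle of the proof. Once uniqueness is in place, the argument reverses into an algorithm: compute the six second partials of $F$ to form $H_{\partial}$; build the $4$-dimensional apolar system $(F^{\perp})_{3}=H_{\partial}^{\perp}$; determine the multiplicity-seven point $p^{*}$ on the image $\overline{V}\subset\mathbb{P}^{3}$; compute the nine-point fibre under $\pi_{H_{\partial}}$; separate the seven $L_{i}$ from the two residuals using that the latter are intrinsically identified as the common base locus of $(F^{\perp})_{3}$; recover $L_{1},\dots,L_{7}$.
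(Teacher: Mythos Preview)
Your setup agrees with the paper's: pass to second partials, span $H_{\partial}\subset\P^{9}$, and project $V_{3}^{2}$ to $\P^{3}$. The reconstruction algorithm you extract is also essentially Construction~\ref{cHi}. The divergence is in the uniqueness argument, and there your proposal has real gaps.

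First, the fibre description is off. The map $\P^{2}\to\P^{3}$ is given by the $4$-dimensional system $(F^{\perp})_{3}$; the fibre over $p^{*}$ is the base locus of the \emph{net} $I_{\{L_{i}\}}^{(3)}\subset(F^{\perp})_{3}$, and for seven general points this base locus is exactly those seven points, not nine. You have conflated the degree of the image surface $\overline{V}$ (which is $9$) with the length of the fibre. There are no ``two residual points''.

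Second, the postulation step is incomplete. You argue that $I_{\{L_{i}\}}^{(3)}$ and $I_{\{M_{j}\}}^{(3)}$ are $3$-dimensional subspaces of the $4$-dimensional $(F^{\perp})_{3}$, hence their intersection $I_{\{L_{i}\}\cup\{M_{j}\}}^{(3)}$ has dimension at least $2$, and then assert that this forces $|\{L_{i}\}\cap\{M_{j}\}|\geq 6$. But that deduction needs the union to impose independent conditions on cubics, which you have not checked; nine points on a pencil of cubics need not be eight-or-fewer in number (think of a complete intersection of two cubics, or configurations with a common line or conic). One must rule these degenerations out using the special position constraints on $\{L_{i}\}$ and $\{M_{j}\}$, and you have not done so.

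Third, and most seriously, you explicitly flag the six-point-overlap case as ``the main obstacle'' and then dispatch it with the phrases ``Terracini argument'', ``apolarity condition in degree four'', and ``Gorenstein duality'' without any actual mechanism. A linear relation among eight fifth powers $L_{1}^{5},\dots,L_{7}^{5},M_{7}^{5}$ is not excluded by generic independence of eight points on $V_{5}^{2}$, because $M_{7}$ is \emph{not} general once you have fixed $F$ and the overlap. Nothing in the Hilbert function $(1,3,6,6,3,1)$ by itself prevents this.

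The paper's argument avoids all of this by staying geometric. If two decompositions existed, $\overline{V}\subset\P^{3}$ would carry two points of multiplicity $7$; the line through them meets $\overline{V}$ with multiplicity $\geq 14>9=\deg\overline{V}$, so it lies on $\overline{V}$ and pulls back to a curve $\Gamma\subset V_{3}^{2}\cap\langle H_{L},H_{l}\rangle$ with $\deg\Gamma\leq 7$ and $\deg\Gamma\equiv 0\pmod 3$. The two cases $\deg\Gamma\in\{3,6\}$ are then killed by looking at the pencil of hyperplanes through $\langle H_{L},H_{l}\rangle$: its pullback to $\P^{2}$ is a pencil of conics (resp.\ lines), whose base locus forces $\{L_{4}^{3},\dots,L_{7}^{3}\}=\{l_{4}^{3},\dots,l_{7}^{3}\}$ (resp.\ $L_{7}^{3}=l_{7}^{3}$), hence $H_{L}=H_{l}$. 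This replaces your unfinished overlap analysis with a short degree-and-base-locus argument.
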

\begin{proof}
By the main result in \cite{AH} we have $\dim(VSP(F,7)) = 0$. Assume that $F$ admits two different decompositions $\{L_{1},...,L_{7}\}$ and $\{l_{1},...,l_{7}\}$. Consider the second partial derivatives of $F$. These are six general homogeneous polynomials of degree three. Let $H_{\partial}\subseteq \mathbb{P}^{9}$ be the linear space they generate. Clearly a decomposition of $F$ induces a decomposition of its partial derivatives and we have 
$$H_{L} =\langle L_{1}^{3},...,L_{7}^{3}\rangle\supset H_\partial\subset \langle l_{1}^{3},...,l_{7}^{3}\rangle=H_l.$$ 
Since $F$ is general both $H_{L}$ and $H_{l}$ intersect the Veronese surface $V_{3}^{2} \subseteq \mathbb{P}^{9}$ at $7$ distinct points. Let
$$\pi : \mathbb{P}^{9}\dasharrow \mathbb{P}^{3}$$
be the projection from $H_\partial$, and $\overline{V} = \pi(V_{3}^{2})$. Then $\overline{V}$ is a surface  of degree $\deg(\overline{V}) = 9$ with two points of multiplicity $7$ corresponding to $\pi(H_L)$ and $\pi(H_l)$. The line $\left\langle\pi(H_L),\pi(H_l)\right\rangle$ intersects $\overline{V}$ with multiplicity at least $14>\deg(\overline{V})$. Then $\left\langle\pi(H_L),\pi(H_l)\right\rangle\subset\overline{V}$ and the $7$-plane $H:=\langle H_L,H_l\rangle$ intersect
$V_{3}^{2}$ along a curve $\Gamma$ corresponding to $\left\langle\pi(H_L),\pi(H_l)\right\rangle$. The construction of $\Gamma$ yields
$$\deg\Gamma\leq (H_L\cdot V_{3}^{2})=7.$$
On the other hand $\deg\Gamma=3j$ therefore we end up with the following possibilities.
\begin{itemize}
\item[-] If $\deg\Gamma=3$ then $\Gamma$ is a twisted cubic curve contained in $H$ and 
$$H_{l} \cdot \Gamma = H_{L} \cdot \Gamma = 3.$$
We may assume that $H_{l} \cap \Gamma = \{l^{3}_{1},l^{3}_{2},l^{3}_{3}\}$ and $H_{L} \cap \Gamma = \{L^{3}_{1},L^{3}_{2},L^{3}_{3}\}$. Let
$\Lambda$ be the pencil of hyperplanes containing $H$, and $\nu_{3}^{2}:\P^2\rightarrow V_{3}^{2}$ the Veronese embedding. The linear system
$\nu_{3}^{2*}(\Lambda_{|V_{3}^{2}})$ is a pencil of conics and therefore the base locus of $\Lambda_{|V_{3}^{2}}$ consists of at least four points. To conclude observe that the base locus of $\Lambda_{|V_{3}^{2}}$ contains $H\cap V_{3}^{2}$. This forces
$$\{L^{3}_{4},L^{3}_{5},L^{3}_{6},L^{3}_{7}\}=\{l^{3}_{4},l^{3}_{5},l^{3}_{6},l^{3}_{7}\},$$
and consequently $H_L=H_l$.
\item[-] If $\deg\Gamma=6$ then 
$$H_{l} \cdot \Gamma = H_{L}\cdot \Gamma = 6.$$
We may assume that $\Gamma\supset\{L_1^3,\ldots,L_6^3\}\cup\{l_1^3,\ldots,l_6^3\}$. Let $\Lambda$ be the pencil of hyperplanes containing $H$. The linear system $\nu_{3}^{2*}(\Lambda_{|V_{3}^{2}})$ is a pencil of lines and therefore the base locus of $\Lambda_{|V_{3}^{2}}$ consists of at least one point. This forces
$$L^{3}_{7}=l^{3}_{7},$$
and therefore $H_L=H_l$.
\end{itemize}
In both cases we have $H_L=H_l$ and so the contradiction $\{L_{1},...,L_{7}\}=\{l_{1},...,l_{7}\}$.
\end{proof}

The following construction is inspired by the proof of Theorem \ref{hi}, and provides a method to reconstruct the decomposition.
\begin{Construction}\label{cHi}
Let $\{L_{1},...,L_{7}\}$ be the unique decomposition of $F$. Consider now the projection
$$\pi:\mathbb{P}^{9}\dasharrow \mathbb{P}^3$$
from the linear space $H_{\partial}$. The image of the Veronese variety $\overline{V}$ is a surface of degree $9$ in $\mathbb{P}^{3}$. Furthermore it has a point $P$ of multiplicity $7$, which comes from the contraction of $H_{L} = \left\langle L_{1}^{3},...,L_{7}^{3}\right\rangle$. This is the unique point of multiplicity $7$ on $\overline{V}$ by the uniqueness of the decomposition. From this discussion we derive the following algorithm to find the decomposition.
\begin{itemize}
\item[-] Compute the second partial derivative of $F$.
\item[-] Compute the equation of the $5$-plane $H_{\partial}$ spanned by the derivatives.
\item[-] Project the Veronese variety $V$ in $\mathbb{P}^{3}$ from $H_{\partial}$.
\item[-] Compute the point $P\in\overline{V}$ of multiplicity $7$.
\item[-] Compute the $6$-plane $H = \langle H_{\partial},P\rangle$ spanned by $H_{\partial}$ and the point $P$.
\item[-] Compute the intersection $V_{3}^{2}\cdot H = \{L_{1}^{3},...,L_{7}^{3}\}$.
\end{itemize}  
\end{Construction}

We can prove the uniqueness theorem for $n = 3$, $d = 3$ and $h = 5$ with a variation on the argument we used in the proof of Theorem \ref{hi}. It is enough to consider the first partial derivatives of a general polynomial $F\in k[x_0,x_1,x_2,x_3]_{3}$ instead of the second partial derivatives. However \textit{G. Ottaviani} pointed out the following very simple proof of Sylvester's pentahedral theorem.

\begin{Theorem}\label{sy}
Let $F\in k[x_0,x_1,x_2,x_3]_{3}$ be a general homogeneous polynomial. Then $F$ can be decomposed as sum of five linear forms
$$F = L_{1}^{3}+...+L_{5}^{3}.$$
Furthermore the decomposition is unique.
\end{Theorem}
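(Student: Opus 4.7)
Existence is classical: by \cite{AH} one has $\sec_5(V_3^3)=\mathbb{P}^{19}$, and Proposition \ref{dim} gives $\dim VSP(F,5)=5\cdot 4-19-1=0$, so $VSP(F,5)$ is finite for $F$ general. The plan is to show it is a singleton by adapting the argument of Theorem \ref{hi}, replacing second partial derivatives by first partial derivatives of $F$.

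Consider the four quadrics $\partial F/\partial x_j \in k[x_0,\ldots,x_3]_2$, spanning a $3$-plane $H_\partial \subset \mathbb{P}^9$ for $F$ general. If $F=L_1^3+\cdots+L_5^3$ is a decomposition, each $\partial F/\partial x_j$ lies in the $4$-plane $H_L:=\langle L_1^2,\ldots,L_5^2\rangle$, so $H_\partial \subset H_L$; any second decomposition $F=l_1^3+\cdots+l_5^3$ likewise yields $H_l\supset H_\partial$. Project from $H_\partial$ to obtain $\pi:\mathbb{P}^9\dasharrow\mathbb{P}^5$. For $F$ general, $H_\partial\cap V_2^3=\emptyset$ by dimension, so $\pi$ restricts to a birational morphism onto a threefold $\overline{V}=\pi(V_2^3)\subset \mathbb{P}^5$ of degree $\deg V_2^3=8$. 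The points $\pi(H_L),\pi(H_l)\in \overline{V}$ have multiplicity $5$, since the fibers $H_L\cap V_2^3$ and $H_l\cap V_2^3$ each consist of $5$ points of $V_2^3$.

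Since $\overline{V}$ is not a hypersurface in $\mathbb{P}^5$, I would next compose $\pi$ with a general linear projection $\mathbb{P}^5\dasharrow\mathbb{P}^4$, producing a threefold hypersurface of degree $8$ in $\mathbb{P}^4$ that still carries the two multiplicity-$5$ points coming from $H_L, H_l$. The line through them meets this hypersurface with multiplicity at least $5+5=10>8$, so is contained in it. Pulling back, the $5$-plane $\Pi=\langle H_L, H_l\rangle\subset \mathbb{P}^9$ meets $V_2^3$ along a rational curve $\Gamma$, corresponding under $V_2^3\cong\mathbb{P}^3$ to a rational $\Gamma'\subset\mathbb{P}^3$; the containment $\Gamma\subset\Pi$ forces $h^0(I_{\Gamma'}(2))\geq 4$, whence $\deg\Gamma'\leq 2$.

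The main obstacle is the concluding step: showing that no such $\Gamma'$ is compatible with both Waring sets $\{L_1,\ldots,L_5\}$ and $\{l_1,\ldots,l_5\}$ while $F$ stays general. This requires tracking how the ten points $L_i^2, l_j^2$ distribute between $\Gamma$ and the isolated part of $\Pi\cap V_2^3$, and arguing that either the $L_i, l_j$ are forced to lie in a proper linear subspace of $\mathbb{P}^3$ (contradicting non-degeneracy of $F$) or the two decompositions coincide set-theoretically. This subtlety, which does not arise in the codimension-one setting of Theorem \ref{hi} because the line argument there lands directly on the Veronese surface, is where the bulk of the work in the proof would go.
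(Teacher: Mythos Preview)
Your strategy is exactly the one the paper alludes to in the sentence just before Theorem~\ref{sy} (``It is enough to consider the first partial derivatives\dots''), but the paper then presents an entirely different and much shorter argument attributed to Ottaviani. That proof considers the $3$-plane $\Gamma=\{P_\xi F:\xi\in\mathbb{P}^3\}\subset\mathbb{P}^9$ of polar quadrics and intersects it with the locus $X_2\subset\mathbb{P}^9$ of quadrics of rank at most two. A decomposition $F=\sum L_i^3$ forces $P_\xi F$ to drop to rank $\leq 2$ at each of the $\binom{5}{3}=10$ points of $\mathbb{P}^3$ where three of the $L_i$ vanish; since $X_2$ has dimension $6$ and degree $10$, the scheme $\Gamma\cap X_2$ consists of exactly these ten points, and they in turn determine the five planes $L_i=0$. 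No projections, no case analysis.

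Your outline, by contrast, leaves real work undone. First, the auxiliary projection to $\mathbb{P}^4$ does not deliver what you claim: the pullback of the line in $\mathbb{P}^4$ to $\mathbb{P}^9$ is a $6$-plane, not the $5$-plane $\Pi=\langle H_L,H_l\rangle$, and the line lying on the degree-$8$ hypersurface in $\mathbb{P}^4$ does not by itself force $\ell=\langle\pi(H_L),\pi(H_l)\rangle$ to lie on the codimension-two threefold $\overline V\subset\mathbb{P}^5$. (This can be repaired---vary the projection center over $\mathbb{P}^5$ and count dimensions---but it is a gap as written.) Second, and more seriously, the concluding case analysis on $\Gamma'$ that you flag as ``the bulk of the work'' is entirely absent; in the paper's surface case (Theorem~\ref{hi}) that step occupies most of the proof, and there is no reason to expect it to be shorter here. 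Ottaviani's argument sidesteps both issues.
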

\begin{proof}
By the main result in \cite{AH} we have $\dim(VSP(F,5)) = 0$. Let us consider
$$P_{\xi}F = \xi_{0}\frac{\partial F}{\partial x_{0}} + \xi_{1}\frac{\partial F}{\partial x_{1}} + \xi_{2}\frac{\partial F}{\partial x_{2}} + \xi_{3}\frac{\partial F}{\partial x_{3}}.$$
If $F = L^{3}_{1}+...+L^{3}_{5}$ then we have
$$P_{\xi}F = \sum^{5}_{i = 1}\xi_{i}\lambda_{i}L^{2}_{i}.$$
Therefore $P_{\xi}F$ has rank $2$ on the points $\xi \in \mathbb{P}^{3}$ on which three of the linear forms $L_{i}$ vanish simultaneously. These points are $\binom{5}{3} = 10$.\\
Now we consider the subvariety $X_{2}$ of $\mathbb{P}^{9}$ parametrizing rank $2$ quadrics. A quadric $Q$ of rank $2$ is the union of two planes. We have $\dim(X_{2}) = 6$. To find the degree of $X_{2}$ we have to intersect it with a $3$-plane. So the degree of $X_{2}$ is equal to the number of quadrics of rank $2$ passing through $6$ general points of $\mathbb{P}^{3}$. These quadrics are $\frac{1}{2}\binom{6}{3} = 10$. Therefore $\dim(X_{2}) = 6$ and $\deg(X_{2}) = 10$. Now, let us consider the $3$-plane
$$\Gamma = \{P_{\xi}F \: | \: \xi \in \mathbb{P}^{3}\} \subseteq \mathbb{P}^{9}.$$   
The intersection $\Gamma\cap X_{2} = \{P_{\xi}F \: | \: \rank(P_{\xi}F) = 2\}$ is a set of $10$ points. These points have to be the $10$ points we have found in the first part of the proof. Then the decomposition of $F$ is unique.
\end{proof}

The argument used in the proof suggests us an algorithm to reconstruct the decomposition.
\begin{Construction}
Consider $F$ and its first partial derivatives.
\begin{itemize}
\item[-] Compute the $3$-plane $\Gamma$ spanned by the partial derivatives of $F$.
\item[-] Compute the intersection $\Gamma \cdot X_{2}$, where $X_{2}$ is the variety parametrizing the rank $2$ quadrics in $\mathbb{P}^{3}$.
\item[-] Consider the $10$ points in the intersection. On each plane we are looking for there are $6$ of these points, furthermore on each plane there are $4$ triples of collinear points. Then with these $10$ points we can construct exactly $\frac{\binom{10}{3}}{\binom{6}{3}+4} = 5$ planes. These planes give the decomposition of $F$. Note that a priori we have $\binom{10}{6} = 210$ choices, but we are interested in sets of six points $\{P_{j_{1}},...,P_{j_{6}}\}$ which lie on the same plane. We know that there are exactly five of these. To find the five combinations we use the following Mathlab script which constructs a matrix $A$ whose lines are the ten points and then computes the $6\times 4$ submatrices of rank $3$ of $A$.
\begin{scr}
$
\texttt{P1 = input('Point 1:');}\\
\vdots\\
\texttt{P10 = input('Point 10:');}\\
\texttt{q = input('Precision:');}\\
\texttt{A = [P1;P2;P3;P4;P5;P6;P7;P8;P9;P10];}\\
\texttt{t = 1;}\\
\texttt{B = [];}\\
\texttt{for a=1:5,}\\
\texttt{for b=a+1:6,}\\
\texttt{for c=b+1:7,}\\
\texttt{for d=c+1:8,}\\
\texttt{for f=d+1:9,}\\
\texttt{for g=f+1:10,}\\
\texttt{M = [A(a,:);A(b,:);A(c,:);A(d,:);A(f,:);A(g,:)];}\\
\texttt{disp(t);}\\
\texttt{t = t+1;}\\
\texttt{v = [];}\\
\texttt{for a1 = 1:3,}\\
\texttt{for a2 = a1+1:4,}\\
\texttt{for a3 = a2+1:5,}\\
\texttt{for a4 = a3+1:6,}\\
\texttt{v = [v,det([M(a1,:);M(a2,:);M(a3,:);M(a4,:)])];}\\
\texttt{end; end; end; end;}\\
\texttt{if abs(v(1))<q,abs(v(2))<q,abs(v(3))<q,abs(v(4))<q,abs(v(5))<q,}\\
\texttt{abs(v(6))<q,abs(v(7))<q,abs(v(8))<q,abs(v(9))<q,abs(v(10))<q,}\\
\texttt{abs(v(11))<q,abs(v(12))<q,abs(v(13))<q,abs(v(14))<q,abs(v(15))<q,}\\	
\texttt{B = [B M];}\\
\texttt{end; end; end; end; end; end; end;}\\
\texttt{[n,m] = size(B);}\\
\texttt{s = 1;}\\
\texttt{for r=1:4:m-3,}\\
\texttt{disp('Matrix'), disp(s),}\\
\texttt{s = s+1;}\\
\texttt{B(:,r:r+3),}\\
\texttt{end;}
$
\end{scr}   
\end{itemize}
\end{Construction}

In the following we work out Theorem \ref{uu} in the case $X = V_{5}^{2}$ and $h\geq 7$.

\begin{Proposition}\label{uni}
Let $F\in k[x_{0},x_{1},x_{2}]_{5}$ be a general homogeneous polynomial. For any $h\geq 7$ the variety $VSP(F,h)$ is unirational.
\end{Proposition}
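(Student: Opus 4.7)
The plan is to apply Theorem \ref{uu} directly to the Veronese surface $X = V_{5}^{2}\subset\mathbb{P}^{20}$ (note $N(2,5) = \binom{7}{5}-1 = 20$), taking $\overline{h} = 7$. Recall that under the identification between $VSP_H^{V_d^n}(h)$ and the classical variety of sums of powers, the statement to prove is exactly that $VSP_H^{V_5^2}(h)$ is unirational for $h\geq 7$.

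First I would verify the three hypotheses of Theorem \ref{uu}. The Veronese surface $V_5^2$ is rational, being the image of $\mathbb{P}^2$ under $\nu_5^2$. The equality $\sec_7(V_5^2) = \mathbb{P}^{20}$ follows from the Alexander--Hirschowitz theorem (as already invoked in the proof of Theorem \ref{hi}), since the expected dimension of $\sec_7(V_5^2)$ is $7\cdot 2 + 6 = 20 = N$, and the Veronese surface $V_5^2$ is not $7$-defective. Finally, $VSP_H^{V_5^2}(7)$ is a single point: this is precisely the content of Theorem \ref{hi}, which states that a general $F\in k[x_0,x_1,x_2]_5$ admits a unique decomposition as a sum of seven fifth powers of linear forms. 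Equivalently, through a general point of $\mathbb{P}^{20}$ there passes exactly one $6$-plane $7$-secant to $V_5^2$.

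With all three hypotheses in place, Theorem \ref{uu} immediately yields that $VSP_H^{V_5^2}(h)$ is unirational for every $h\geq 7$. Identifying $VSP_H^{V_5^2}(h)$ with $VSP(F,h)$ for $F$ a general polynomial in $k[x_0,x_1,x_2]_5$ gives the statement.

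There is essentially no obstacle: the proposition is a direct corollary of the combination of Theorems \ref{hi} and \ref{uu}. The only point that deserves explicit mention is the confirmation that $\dim VSP(F,7) = 0$ (equivalently $\sec_7(V_5^2)=\mathbb{P}^{20}$ with the generic fiber of $\pi_7$ being zero-dimensional), which uses non-defectivity of $V_5^2$ in degree $7$ and is already in the literature. Given this, the incidence-variety construction used in the proof of Theorem \ref{uu} specializes verbatim: one parametrizes $(h-7)$-tuples of points on $\mathbb{P}^2$ together with a point of $\langle p, x_1,\dots,x_{h-7}\rangle$, uses rationality of $V_5^2$ to conclude rationality of the incidence variety, and then completes the decomposition by appending the unique canonical $7$-decomposition of the residual point.
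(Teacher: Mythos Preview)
Your proposal is correct and matches the paper's approach: the paper explicitly introduces Proposition~\ref{uni} as ``working out Theorem~\ref{uu} in the case $X=V_5^2$ and $h\geq 7$,'' and its proof is exactly the incidence-variety construction from Theorem~\ref{uu} written out concretely for this case, using Theorem~\ref{hi} for the uniqueness at $\overline{h}=7$. Your invocation of Theorem~\ref{uu} with the three hypotheses verified is entirely equivalent.
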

\begin{proof}
By Theorem \ref{hi} $VSP(F,7)$ is a single point. If $h\geq 8$ consider the incidence variety 
    \[
  \begin{tikzpicture}[xscale=1.5,yscale=-1.2]
    \node (A0_1) at (1, 0) {$\mathcal{I} = \{(l_{1},...,l_{h-7},G)\; |\; G\in \langle F,l_{1}^{5},...,l_{h-7}^{5}\rangle\}\subseteq(\mathbb{P}^{2})^{h-7}\times\mathbb{P}^{20}$};
    \node (A1_0) at (0, 1) {$(\mathbb{P}^{2})^{h-7}$};
    \node (A1_2) at (2, 1) {$\mathbb{P}^{20}$};
    \path (A0_1) edge [->]node [auto] {$\scriptstyle{\psi}$} (A1_2);
    \path (A0_1) edge [->]node [auto,swap] {$\scriptstyle{\phi}$} (A1_0);
  \end{tikzpicture}
  \]
The map $\phi$ is dominant and its general fiber is a linear subspace of dimension $h-7$ in $\mathbb{P}^{20}$. Then $\mathcal{I}$ is a rational variety of dimension $2(h-7)+h-7 = 3h-21$.\\
Let $(l_{1},...,l_{h-7},G)\in\mathcal{I}$ be a general point. By Theorem \ref{hi} the polynomial $G$ admits a unique decomposition $G = L_{1}^{5}+...+L_{7}^{5}$. Since $G\in \langle F,l_{1}^{5},...,l_{h-7}^{5}\rangle$ we have $L_{1}^{5}+...+L_{7}^{5} = \alpha F + \sum_{i=1}^{h-7}\lambda_{i}l_{i}^{5}$, and
$$F = \frac{1}{\alpha}L_{1}^{5}+...+\frac{1}{\alpha}L_{7}^{5}-\sum_{i=1}^{h-7}\frac{\lambda_{i}}{\alpha}l_{i}^{5}.$$
We get a generically finite rational map
$$
\begin{array}{cccc}
\chi: & \mathcal{I} & \dasharrow & VSP(F,h)\\
 & (l_{1},...,l_{h-7},G) & \longmapsto & \{L_{1},...,L_{7},l_{1},...,l_{h-7}\}
\end{array}
$$
Since $\dim(VSP(F,h)) = 3h-21 = \dim(\mathcal{I})$ the map $\chi$ is dominant and $VSP(F,h)$ is unirational.
\end{proof}

\begin{Remark}
Consider a general homogeneous polynomial $F\in k[x_0,x_1,x_2,x_3]_{3}$. By Theorem \ref{sy} $F$ admits a unique decomposition as sum of five powers of linear forms. The argument used in Proposition \ref{uni} in this case shows that $VSP(F,h)$ is unirational for any $h\geq 5$.
\end{Remark}

\subsubsection*{Acknowledgments}
I would like to thank \textit{Massimiliano Mella} for many helpful comments.

\end{document}